\newtheorem{thm}[subsection]{Theorem}
\newtheorem{subthm}[subsubsection]{Theorem}
\newtheorem{sublem}[subsubsection]{Lemma}
\newtheorem{Prop}[subsection]{Proposition}
\newtheorem{prop}[subsubsection]{Proposition}
\newtheorem{subprop}[subsubsection]{Proposition}
\newtheorem{cor}[subsection]{Corollary}
\newtheorem{ass}[subsection]{Assumption}
\theoremstyle{definition}
\newtheorem{defn}[subsection]{Definition}
\newtheorem{subdefn}[subsubsection]{Definition}
\newtheorem{rem}[subsection]{Remark}
\newtheorem{subrem}[subsubsection]{Remark}
\numberwithin{equation}{section}
\newcommand{\beqa}{\begin{eqnarray}}
\newcommand{\eeqa}{\end{eqnarray}}
\newcommand{\nc}{\newcommand}
\newcommand{\rnc}{\renewcommand}
\nc{\cal}{\mathcal}
\nc{\goth}{\mathfrak}
\rnc{\bold}{\mathbf}
\renewcommand{\frak}{\mathfrak}
\renewcommand{\Bbb}{\mathbb}
\newcommand{\one}{\mathbf{1}}
\nc\K{\mathbb K}
\nc{\Cal}{\mathcal}
\nc{\Xp}[1]{X^+(#1)}
\nc{\Xm}[1]{X^-(#1)}
\nc{\on}{\operatorname}
\nc{\ch}{\mbox{ch}}
\nc{\Z}{{\bold Z}}
\nc{\J}{{\mathcal J}}
\nc{\C}{{\bold C}}
\nc{\Q}{{\bold Q}}
\nc{\N}{{\Bbb N}}
\nc\beq{\begin{equation}}
\nc\enq{\end{equation}}
\nc\lan{\langle}
\nc\ran{\rangle}
\nc\bsl{\backslash}
\nc\mto{\mapsto}
\nc\lra{\leftrightarrow}
\nc\hra{\hookrightarrow}
\nc\sm{\smallmatrix}
\nc\esm{\endsmallmatrix}
\nc\sub{\subset}
\nc\ti{\tilde}
\nc\nl{\newline}
\nc\fra{\frac}
\nc\und{\underline}
\nc\ov{\overline}
\nc\ot{\otimes}
\nc\bbq{\bar{\bq}_l}
\nc\bcc{\thickfracwithdelims[]\thickness0}
\nc\ad{\text{\rm ad}}
\nc\Ad{\text{\rm Ad}}
\nc\Hom{\text{\rm Hom}}
\nc\End{\text{\rm End}}
\nc\Ind{\text{\rm Ind}}
\nc\Res{\text{\rm Res}}
\nc\Ker{\text{\rm Ker}}
\rnc\Im{\text{Im}}
\nc\sgn{\text{\rm sgn}}
\nc\tr{\text{\rm tr}}
\nc\Tr{\text{\rm Tr}}
\nc\supp{\text{\rm supp}}
\nc\card{\text{\rm card}}
\nc\bst{{}^\bigstar\!}
\nc\he{\heartsuit}
\nc\clu{\clubsuit}
\nc\spa{\spadesuit}
\nc\di{\diamond}
\nc\cW{\cal W}
\nc\cG{\cal G}
\nc\al{\alpha}
\nc\bet{\beta}
\nc\ga{\gamma}
\nc\de{\delta}
\nc\ep{\epsilon}
\nc\io{\iota}
\nc\om{\omega}
\nc\si{\sigma}
\rnc\th{\theta}
\nc\ka{\kappa}
\nc\la{\lambda}
\nc\ze{\zeta}
\nc\vp{\varpi}
\nc\vt{\vartheta}
\nc\vr{\varrho}
\nc\Ga{\Gamma}
\nc\De{\Delta}
\nc\Om{\Omega}
\nc\Si{\Sigma}
\nc\Th{\Theta}
\nc\La{\Lambda}
\nc\bepsilon{\overline{\epsilon}}
\nc\bak{\overline{k}}
\nc\bgamma{\overline{\gamma}}
\nc\boa{\bold a}
\nc\bob{\bold b}
\nc\boc{\bold c}
\nc\bod{\bold d}
\nc\boe{\bold e}
\nc\bof{\bold f}
\nc\bog{\bold g}
\nc\boh{\bold h}
\nc\boi{\bold i}
\nc\boj{\bold j}
\nc\bok{\bold k}
\nc\bol{\bold l}
\nc\bom{\bold m}
\nc\bon{\bold n}
\nc\boo{\bold o}
\nc\bop{\bold p}
\nc\boq{\bold q}
\nc\bor{\bold r}
\nc\bos{\bold s}
\nc\bou{\bold u}
\nc\bov{\bold v}
\nc\bow{\bold w}
\nc\boz{\bold z}
\nc\ba{\bold A}
\nc\bb{\bold B}
\nc\bc{\bold C}
\nc\bd{\bold D}
\nc\be{\bold E}
\nc\bg{\bold G}
\nc\bh{\bold H}
\nc\bi{\bold I}
\nc\bj{\bold J}
\nc\bk{\bold K}
\nc\bl{\bold L}
\nc\bm{\bold M}
\nc\bn{\bold N}
\nc\bo{\bold O}
\nc\bp{\bold P}
\nc\bq{\bold Q}
\nc\br{\bold R}
\nc\bs{\bold S}
\nc\bt{\bold T}
\nc\bu{\bold U}
\nc\bv{\bold V}
\nc\bw{\bold W}
\nc\bz{\bold Z}
\nc\bx{\bold X}
\nc\ca{\mathcal A}
\nc\cb{\mathcal B}
\nc\cc{\mathcal C}
\nc\cd{\mathcal D}
\nc\ce{\mathcal E}
\nc\cf{\mathcal F}
\nc\cg{\mathcal G}
\rnc\ch{\mathcal H}
\nc\ci{\mathcal I}
\nc\cj{\mathcal J}
\nc\ck{\mathcal K}
\nc\cl{\mathcal L}
\nc\cm{\mathcal M}
\nc\cn{\mathcal N}
\nc\co{\mathcal O}
\nc\cp{\mathcal P}
\nc\cq{\mathcal Q}
\nc\car{\mathcal R}
\nc\cs{\mathcal S}
\nc\ct{\mathcal T}
\nc\cu{\mathcal U}
\nc\cv{\mathcal V}
\nc\cz{\mathcal Z}
\nc\cx{\mathcal X}
\nc\cy{\mathcal Y}
\nc\e[1]{E_{#1}}
\nc\ei[1]{E_{\delta - \alpha_{#1}}}
\nc\esi[1]{E_{s \delta - \alpha_{#1}}}
\nc\eri[1]{E_{r \delta - \alpha_{#1}}}
\nc\ed[2][]{E_{#1 \delta,#2}}
\nc\ekd[1]{E_{k \delta,#1}}
\nc\emd[1]{E_{m \delta,#1}}
\nc\erd[1]{E_{r \delta,#1}}
\nc\ef[1]{F_{#1}}
\nc\efi[1]{F_{\delta - \alpha_{#1}}}
\nc\efsi[1]{F_{s \delta - \alpha_{#1}}}
\nc\efri[1]{F_{r \delta - \alpha_{#1}}}
\nc\efd[2][]{F_{#1 \delta,#2}}
\nc\efkd[1]{F_{k \delta,#1}}
\nc\efmd[1]{F_{m \delta,#1}}
\nc\efrd[1]{F_{r \delta,#1}}
\nc\fa{\frak a}
\nc\fb{\frak b}
\nc\fc{\frak c}
\nc\fd{\frak d}
\nc\fe{\frak e}
\nc\ff{\frak f}
\nc\fg{\frak g}
\nc\fh{\frak h}
\nc\fj{\frak j}
\nc\fk{\frak k}
\nc\fl{\frak l}
\nc\fm{\frak m}
\nc\fn{\frak n}
\nc\fo{\frak o}
\nc\fp{\frak p}
\nc\fq{\frak q}
\nc\fr{\frak r}
\nc\fs{\frak s}
\nc\ft{\frak t}
\nc\fu{\frak u}
\nc\fv{\frak v}
\nc\fz{\frak z}
\nc\fx{\frak x}
\nc\fy{\frak y}
\nc\fA{\frak A}
\nc\fB{\frak B}
\nc\fC{\frak C}
\nc\fD{\frak D}
\nc\fE{\frak E}
\nc\fF{\frak F}
\nc\fG{\frak G}
\nc\fH{\frak H}
\nc\fJ{\frak J}
\nc\fK{\frak K}
\nc\fL{\frak L}
\nc\fM{\frak M}
\nc\fN{\frak N}
\nc\fO{\frak O}
\nc\fP{\frak P}
\nc\fQ{\frak Q}
\nc\fR{\frak R}
\nc\fS{\frak S}
\nc\fT{\frak T}
\nc\fU{\frak U}
\nc\fV{\frak V}
\nc\fZ{\frak Z}
\nc\fX{\frak X}
\nc\fY{\frak Y}
\nc\tfi{\ti{\Phi}}
\nc\bF{\bold F}
\rnc\bol{\bold 1}
\nc\ua{\bold U_\A}
\newcommand{\oC}{\mathbb{C}}
\nc\qinti[1]{[#1]_i}
\nc\q[1]{[#1]_q}
\nc\xpm[2]{E_{#2 \delta \pm \alpha_#1}}  
\nc\xmp[2]{E_{#2 \delta \mp \alpha_#1}}
\nc\xp[2]{E_{#2 \delta + \alpha_{#1}}}
\nc\xm[2]{E_{#2 \delta - \alpha_{#1}}}
\nc\hik{\ed{k}{i}}
\nc\hjl{\ed{l}{j}}
\nc\qcoeff[3]{\left[ \begin{smallmatrix} {#1}& \\ {#2}& \end{smallmatrix}
\negthickspace \right]_{#3}}
\nc\qi{q}
\nc\qj{q}
\newcommand{\ffrac}[2]{\mbox{\footnotesize$\displaystyle\frac{#1}{#2}$}}
\newcommand{\rmi}{\mathrm{i}}
\newcommand{\dia}{d}   
\nc\ufdm{{_\ca\bu}_{\rm fd}^{\le 0}}
\newcommand{\qO}{O_q(\widehat{sl_2})}
\nc{\NtW}{\textsf{W}^{\footnotesize[N]}}
\nc{\NtcWi}{\cal{W}_i^{\footnotesize[N]}}
\nc{\NtcWipun}{\cal{W}_{i+1}^{\footnotesize[N]}}
\nc{\NtcWzero}{\cal{W}_0^{\footnotesize[N]}}
\nc{\NtcWun}{\cal{W}_1^{\footnotesize[N]}}
\nc{\NA}{\textsf{A}^{\footnotesize[N]}}
\nc{\NAs}{\textsf{A}^{*\footnotesize[N]}}
\nc{\AN}{A^{\footnotesize[N]}}
\nc{\ANs}{{A}^{*\footnotesize[N]}}
\nc{\NWex}[1]{{\cW}^{\footnotesize[#1]}}
\nc{\Aex}[1]{{A}^{\footnotesize[#1]}}
\nc{\Aexs}[1]{{A^*}^{\footnotesize[#1]}}
\nc\isom{\cong} 
\nc{\pone}{{\Bbb C}{\Bbb P}^1}
\nc{\pa}{\partial}
\nc{\F}{{\mathcal F}}
\nc{\Sym}{{\goth S}}
\nc{\A}{{\mathcal A}}
\nc{\arr}{\rightarrow}
\nc{\larr}{\longrightarrow}
\nc{\ri}{\rangle}
\nc{\lef}{\langle}
\nc{\W}{{\mathcal W}}
\nc{\uqatwoatone}{{U_{q,1}}(\su)}
\nc{\uqtwo}{U_q(\goth{sl}_2)}
\nc{\dij}{\delta_{ij}}
\nc{\divei}{E_{\alpha_i}^{(n)}}
\nc{\divfi}{F_{\alpha_i}^{(n)}}
\nc{\Lzero}{\Lambda_0}
\nc{\Lone}{\Lambda_1}
\nc{\ve}{\varepsilon}
\nc{\phioneminusi}{\Phi^{(1-i,i)}}
\nc{\phioneminusistar}{\Phi^{* (1-i,i)}}
\nc{\phii}{\Phi^{(i,1-i)}}
\nc{\Li}{\Lambda_i}
\nc{\Loneminusi}{\Lambda_{1-i}}
\nc{\vtimesz}{v_\ve \otimes z^m}
\nc{\asltwo}{\widehat{\goth{sl}_2}}
\nc\ag{\widehat{\goth{g}}}  
\nc\teb{\tilde E_\boc}
\nc\tebp{\tilde E_{\boc'}}
\newcommand{\eeq}{\end{equation}}
\newcommand{\ben}{\begin{eqnarray}}
\newcommand{\een}{\end{eqnarray}}
\begin{document}
\begin{flushright}
ZMP-HH/16-10\\
Hamburger Beitr\"age zur Mathematik 595
\end{flushright}
\vskip 5em

\title[Cyclic TD pairs and Onsager algebras]
{Cyclic tridiagonal pairs, higher order Onsager algebras\\ and orthogonal polynomials}

\author{P. Baseilhac$^{*,\diamond}$}
\author{A.M. Gainutdinov$^{*,\dagger}$}
\author{T.T. Vu$^*$}

\address{$^{*}$ Laboratoire de Math\'ematiques et Physique Th\'eorique CNRS/UMR 7350,
 F\'ed\'eration Denis Poisson FR2964,
Universit\'e de Tours,
Parc de Grammont, 37200 Tours, 
France}

\email{baseilha@lmpt.univ-tours.fr; \newline\mbox{}\qquad\qquad\qquad\qquad\  azat.gainutdinov@lmpt.univ-tours.fr; \newline\mbox{}\qquad\qquad\qquad\qquad\  thi-thao.vu@lmpt.univ-tours.fr}

\address{$^{\dagger}$ DESY, Theory Group, Notkestrasse 85, Bldg. 2a, 22603 Hamburg and
 Fachbereich Mathematik, Universit\"at Hamburg, Bundesstra\ss e 55,
20146 Hamburg, Germany}

\address{$^{\diamond}$ Centre de recherches math\'ematiques Universit\'e de Montr\'eal, CNRS/UMI 3457, P.O. Box 6128, Centre-ville Station, Montr\'eal (Qu\'ebec), H3C 3J7 CANADA}

\begin{abstract} The concept of cyclic tridiagonal pairs is introduced, and explicit examples are given. For a fairly general  class of cyclic tridiagonal pairs
with cyclicity~$N$, we associate a pair of `divided polynomials'. The properties of this pair generalize the ones of tridiagonal pairs 
of Racah type. The algebra generated by the pair of divided polynomials is identified as a higher-order generalization of the Onsager algebra. It can be viewed as a subalgebra of the $q-$Onsager algebra for a proper specialization at $q$ the primitive $2N$th root of unity. Orthogonal polynomials beyond the Leonard duality are revisited in light of this framework. In particular, certain second-order Dunkl shift operators provide a realization of the divided polynomials at $N=2$ or $q=\rmi$.
\end{abstract}

%
%

\maketitle

{\small 2010 MSC:\ 20G42;\ 33D80;\ 42C05; \ 81R12}



{{\small  {\it \bf Keywords}: Tridiagonal pair; Tridiagonal algebra; $q-$Onsager algebra; Recurrence relations; Orthogonal polynomials; Leonard duality}}

\setcounter{tocdepth}{1}
\tableofcontents


\section{Introduction}
Let ${\mathbb K}$ denote a field. Let $V$ denote a vector space over ${\mathbb K}$ with finite positive dimension. 
Recall that a {\it Leonard pair} is a pair of linear transformations $A,A^*$ such that there exist two bases for $V$ with respect to
which the matrix representing $A$ (resp. $A^*$) is irreducible tridiagonal (resp. diagonal) and
 the matrix representing $A^*$ (resp. $A$) is diagonal (resp. irreducible tridiagonal) \cite{Ter03}. For the well-known families of one-variable orthogonal polynomials 
in the Askey-scheme including the Bannai--Ito polynomials \cite{BI}, the bispectral problem they solve finds a natural interpretation within the framework of Leonard's theorem \cite{BI} and Leonard pairs \cite{Ter2}:
given a Leonard pair, the overlap coefficients
between the two bases coincide with polynomials of the Askey-scheme ($q-$Racah, $q-$Hahn, $q-$Krawtchouk, etc.)  or their specializations
evaluated on a discrete support. The three-term recurrence 
relation, three-term difference equation and orthogonality property of the orthogonal polynomials follow from the corresponding 
Leonard pair \cite{Ter2}. In the literature, examples of such correspondence first appeared in \cite{BI} and in the context of mathematical physics, 
see for instance \cite{Zhed,GLZ}.\vspace{1mm}

The concept of {\it tridiagonal pair} (see Definition \ref{tdp}) introduced by Ito--Tanabe--Terwilliger arised as
a natural extension of the concept of Leonard pair \cite{Ter93,Ter01}. For a tridiagonal pair $A,A^*$, in the basis in which $A$ (resp. $A^*$) is diagonal with multiplicities in the eigenvalues, then $A^*$ (resp.~$A$) is a block tridiagonal matrix. So, the main difference between a tridiagonal pair and a Leonard pair is the dimension of the eigenspaces of $A$, $A^*$ that  are not necessarely one-dimensional. In view of the connection between Leonard pairs and orthogonal  polynomials of the Askey-scheme, 
 given a tridiagonal pair a correspondence with certain multivariable generalizations of orthogonal polynomials of the Askey-scheme was expected. This has been recently exhibited \cite{BM}: the so-called Gasper-Rahman polynomials \cite{GR1} and their classical analogues discovered by Tratnik \cite{Trat} evaluated on a multidimensional discrete support are interpreted as the overlap coefficients between two distinct eigenbases of $A$ and $A^*$. In particular, the bispectrality of these polynomials was shown in \cite{GI,Iliev}: they satisfy a system of coupled three-term recurrence relations and three-term difference relations.\vspace{1mm}

From a more general point of view, Leonard and tridiagonal pairs arise in the context of 
representation theory,
from irreducible finite dimensional modules of a certain class of classical and quantum algebras with two generators
 ${\textsf A},{\textsf A}^*$ and unit, called tridiagonal algebras \cite{Ter03} and the $q-$Onsager algebra \cite{B4}.  These algebras 
are defined through a pair of relations, see Definition \ref{TDgendef}, which can be viewed as deformations 
of the $q-$Serre relations of $U_q(\widehat{sl_2})$ or deformations of the Dolan-Grady 
relations \cite{DG}. Note that infinite dimensional modules have also been constructed, see \cite{BB3} or \cite{BM}.
In \cite{BM}, ${\textsf A},{\textsf A}^*$ are mapped  to certain multivariable $q-$difference/difference operators introduced by 
Geronimo and Iliev \cite{GI,Iliev}. \vspace{1mm}

Having in mind a specialization of the tridiagonal and $q-$Onsager algebras relations at $q$ a root of unity, in the present paper we introduce a natural generalization of the tridiagonal pairs that we call a `cyclic tridiagonal pair'. Let $N$ be a positive integer and $\mathbb Z_N$ denotes the cyclic group $\mathbb Z/N\mathbb Z$ of order~$N$.
\begin{defn}\label{deficitri}
By a  {\it cyclic tridiagonal pair} on $V$, we mean an ordered pair of linear transformations $C:V \to V$ and 
$C^*:V \to V$ that satisfy the following four conditions for some positive integer  $N$.
\begin{itemize}
\item[(i)] Each of $C,C^*$ is diagonalizable.
\item[(ii)] There exists an ordering $\lbrace V_p\rbrace_{p\in {\mathbb Z}_N}$ of the  
eigenspaces of $C$ such that 
\begin{equation}
C^* V_p \subseteq V_{p-1} + V_p+ V_{p+1} 
.\label{eq:ct1}
\end{equation}
\item[(iii)] There exists an ordering $\lbrace V^*_p\rbrace_{p\in {\mathbb Z}_N}$ of
the eigenspaces of $C^*$ such that 
\begin{equation}
C V^*_p \subseteq V^*_{p-1} + V^*_p+ V^*_{p+1} 
.\label{eq:ct2}
\end{equation}
\item[(iv)] There does not exist a subspace $W$ of $V$ such  that $CW\subseteq W$,
$C^*W\subseteq W$, $W\not=0$, $W\not=V$.
\end{itemize}
We call the number $N$  \textit{the cyclicity} of the cyclic tridiagonal pair.
\end{defn}

The purpose of this paper is to study a general class of cyclic tridiagonal pairs of cyclicity~$N$. The properties of cyclic tridiagonal pairs $C, C^*$ (they satisfy certain polynomial equations) lead us
to the introduction of a new pair of operators that we call the `divided polynomials' (associated to the pair $C, C^*$). As an application of these new concepts, we propose a systematic approach to the construction of higher-order generalizations of tridiagonal pairs and of the tridiagonal algebras. Namely, we derive explicit pairs of square matrices such that there exist two bases with respect to which the first matrix (resp. the second) is irreducible block $(2N+1)-$diagonal (resp. diagonal) and
 the second matrix  (resp. the first) is diagonal  (resp. irreducible block $(2N+1)-$diagonal), and any such given pair generates an algebra that generalizes the tridiagonal algebra. The case $N=1$ specializes to tridiagonal pairs and the corresponding algebra is a tridiagonal algebra. \vspace{1mm}
 
 There are numerous motivations for the higher-order generalizations: for instance, the problem of polynomial solutions to 
 higher-order recurrence relations \cite{DVA} (see also \cite{Krein} and Chapter VII of \cite{B68}) that occur in the context of matrix valued orthogonal polynomials, sieved polynomials \cite{AA,CIM}, orthogonal polynomials beyond Leonard duality \cite{GVZ,TVZ} that are discussed in Section 5, 
and the bispectral problem for multidiagonal infinite, semi-infinite or finite dimensional matrices \cite{GIK}. Another
motivation comes from representation theory of the $q-$Onsager algebra and the non-perturbative analysis of quantum integrable systems associated with quantum groups or related coideal subalgebras for a deformation parameter $q$ specialized at a root of unity \cite{DFM,V}.
\vspace{1mm}

We now describe the main results of this paper.
The first main result is the following: under some reasonable assumptions (see Assumption~\ref{ass} and the assumption in Corollary~\ref{cor}), to a tridiagonal pair of $q-$Racah type (see Case I after Definition \ref{tdp}) 
 we associate a cyclic tridiagonal pair (by specializing $q$ to  the root of unity $e^{\rmi \pi/N}$)
and a pair of divided polynomials. For a large class of tridiagonal pairs of $q-$Racah type, the relations satisfied by the two pairs are identified (in Proposition \ref{prop-qOA}, Theorems~\ref{prop:mixed-rel-k-zero} and~\ref{propNOns}). In particular,  the divided polynomials satisfy the defining relations of a higher-order generalization of the Onsager algebra, called $N-$Onsager algebra and denoted $OA^{[N]}$ (see Definition \ref{defOAN}), and act on the vector space as higher-order generalizations of tridiagonal pairs.\vspace{1mm}

The second main result is about orthogonal polynomials outside of the Askey scheme (like the complementary Bannai--Ito polynomials): starting from a special class of tridiagonal pairs $A$, $A^*$ of $q-$Racah type, 
under the specialization $q=e^{\rmi \pi/2}$ the operator $A^*$ and the divided polynomial associated to $A$ generate another (non-symmetric) higher-order generalization of the tridiagonal/Onsager algebra. The relations are
given by (\ref{degrel}). Importantly, a homomorphism from this algebra to the complementary Bannai--Ito algebra (\ref{CBIalg}), introduced in \cite{GVZ} is exhibited, see (\ref{mapkappa}). According to the relation between irreducible  finite dimensional representations of (\ref{CBIalg}) and the complementary Bannai--Ito polynomials, we conjecture that  (spectrum-degenerate representations of) our new algebra provides  multivariable generalizations of the complementary Bannai--Ito polynomials and their descendants, besides the one-variable case \cite{GVZ,TVZ}. By construction, such multivariable orthogonal polynomials should satisfy a system of coupled three-term recurrence relations and {\it five-term} difference equations. 
\vspace{1mm}

The paper is organized as follows. In Section 2, we review the concept of tridiagonal pairs introduced in \cite{Ter93}.
The $q-$Dolan-Grady relations satisfied by a class of tridiagonal pairs as well as their higher-order generalizations are  also recalled. 
The concept of a cyclic tridiagonal pair is introduced in Section~3,
 with an example of a cyclic tridiagonal pair in a $4-$dimensional vector space reported in Appendix A.1. To any tridiagonal pair of $q-$Racah type and having certain assumptions,  we also construct  a cyclic tridiagonal pair. An example is reported in Appendix A.2. In Section~\ref{sec:div-poly}, using the pair of minimal polynomials
 associated with the cyclic tridiagonal pair, we construct a second pair of elements that we call the `divided polynomials'.
We describe then the action of the two pairs with respect to the distinct  eigenbases. In Section~4, the  relations satisfied by the cyclic tridiagonal pair and the divided polynomials are identified. In particular, the subalgebra generated by the divided polynomials is (a representation of)
the algebra $OA^{[N]}$ with the defining relations  (\ref{genOAN}).
In Section~5, we consider two operators built from a particular tridiagonal pair of $q-$Racah type $A,A^*$. They are obtained as  $A^*$ and the divided polynomial associated with $A$ under the specialization $q=\rmi$. The algebra (\ref{degrel}) generated by these two operators can be viewed as a non-symmetric generalization of the tridiagonal algebra. We then describe a homomorphism of this algebra onto the complementary Bannai--Ito algebra (\ref{CBIalg}) and onto its degenerate version, the dual $-1$ Hahn algebra (\ref{dualHahn-alg}). In particular, a representation of the algebra (\ref{degrel}) in terms of second-order Dunkl shift operators is given in Proposition~\ref{prop:BI}. 
\vspace{1mm}

Let us point out that the results here presented are independently of  interest in relation with the construction of analogues 
of the Lusztig's specialization~\cite{Luszt} of the quantum affine algebra  $U_q(\widehat{sl_2})$ at $q$ a root of unity and an analogue of Lusztig's ``small'' quantum group for the $q-$Onsager algebra\footnote{The $q-$Onsager algebra is isomorphic to a certain coideal subalgebra of $U_q(\widehat{sl_2})$ \cite{Bas3,BB3}. For the proof of injectivity of the map, see \cite{Kolb}.}.
For the reader familiar with this subject, the pair of divided polynomials here introduced 
can be viewed as an analogue of Lusztig's divided 
powers for elements in a certain coideal subalgebra of $U_q(\widehat{sl_2})$.
In this vein, we conjecture that the relations (\ref{Talg}) together with the polynomial relations~\eqref{relPt-W} and the relations (\ref{eq:mix1}), (\ref{eq:mix2}) and (\ref{genOAhom}) discussed in Section 4 are the defining relations of a specialization of the $q-$Onsager algebra at $q=e^{\frac{\rmi\pi}{N}}$. We intend to discuss this important subject elsewhere.\vspace{1mm}

Finally, let us mention that higher-rank generalizations of the  $N-$Onsager algebra $OA^{[N]}$ may be introduced as well, having in mind 
the higher-rank generalizations of the $q-$Onsager algebra \cite{BB2}
and the higher-rank generalization of the Onsager algebra \cite{UI}. However, to our knowledge
neither the analog of the concept of tridiagonal pair for higher-rank cases nor the representation
theory of the generalized $q-$Onsager algebras has been investigated yet.\vspace{1mm}

{\bf Conventions:} In this paper, $\{ x\}$ denotes the integer part of $x$.  Let $m,n$ be integers. We define  $\overline{m,n}$  for the set $\{m,m+1,...,n-1,n\}$.  We also use the $q$-numbers
$[n]_q=\frac{q^n-q^{-n}}{q-q^{-1}}$, the $q$-factorials $[n]_q!= \prod_{r=1}^n [r]_q$,  with $[0]_q!=1$,
and the $q$-commutators $[X,Y]_q=qXY-q^{-1}YX$, and $[X,Y]=[X,Y]_1$ is the ordinary commutator.

\vspace{2mm}

\section{Tridiagonal pairs and relations}
 In this Section, we first recall  the concept of tridiagonal pairs developed in \cite{Ter93,Ter03,Ter01,NT:muqrac}. Given a tridiagonal pair $A,A^*$, we recall the basic and higher-order relations satisfied by $A,A^*$. As an example, we consider a tridiagonal pair associated with a reduced parameter sequence and indeterminate $q$.  In this case, the pair satisfies the so-called $q-$Dolan-Grady relations and their higher-order generalizations.
\vspace{1mm}

\subsection{Definitions and the classification}
Let ${\mathbb K}$ be an arbitrary field unless otherwise noted. Throughout the paper, we always denote by $V$  a finite-dimensional vector space over $\K$.

\begin{subdefn}[See \cite{Ter01}, Definition 2.1]
\label{tdp}
\rm
\textit{A  tridiagonal pair} (or $TD$ pair)
on $V$ is an ordered pair of linear transformations\footnote{According to a common 
notational convention, for a linear transformation $A$
the conjugate-transpose of $A$ is denoted~$A^*$. This convention is not used here.}
$A:V \to V$ and $A^*:V \to V$ 
that satisfy the following four conditions.
\begin{itemize}
\item[(i)] Each of $A,A^*$ is diagonalizable on $V$.
\item[(ii)] There exists an ordering $\lbrace V_p\rbrace_{p=0}^d$ of the  
eigenspaces of $A$ such that 
\begin{equation}
A^* V_p \subseteq V_{p-1} + V_p+ V_{p+1} \qquad \qquad 0 \leq p \leq d,
\label{eq:t1}
\end{equation}
where $V_{-1} = 0$ and $V_{d+1}= 0$.
\item[(iii)] There exists an ordering $\lbrace V^*_p\rbrace_{p=0}^{\delta}$ of
the  
eigenspaces of $A^*$ such that 
\begin{equation}
A V^*_p \subseteq V^*_{p-1} + V^*_p+ V^*_{p+1} 
\qquad \qquad 0 \leq p \leq \delta,
\label{eq:t2}
\end{equation}
where $V^*_{-1} = 0$ and $V^*_{\delta+1}= 0$.
\item[(iv)] There does not exist a subspace $W$ of $V$ such  that $AW\subseteq W$,
$A^*W\subseteq W$, $W\not=0$, $W\not= V$.
\end{itemize}
\end{subdefn}
\begin{subrem}\label{tdp-rem}
In Definition \ref{tdp}, the scalars $d$ and $\delta$ are necessarily  equal \cite[Lemma~4.5]{Ter01}.
\end{subrem}

Let $A,A^*$ denote a TD pair on $V$, as in Definition 
\ref{tdp}. By \cite[Lemma~4.5]{Ter01} the integer $d=\delta$ from
(ii), (iii) is called \textit{the diameter} of the
pair. An ordering of the eigenspaces of $A$ (resp. $A^*$)
is said to be {\em standard} whenever it satisfies  (\ref{eq:t1})
 (resp. (\ref{eq:t2})).  Let $\{V_p\}_{p=0}^d$ (resp.
$\{V^*_p\}_{p=0}^d$) denote a standard ordering of the eigenspaces
 of $A$ (resp. $A^*$). For $0 \leq p \leq d$, let 
$\theta_p$  (resp. $\theta^*_p$) denote the eigenvalue of
$A$  (resp.  $A^*$) associated with $V_p$  (resp. $V^*_p$). According to the classification~\cite[Theorem~4.4]{Ter03}, the eigenvalues $\theta_p$ and $\theta^*_p$, $p=0,1,...,d$, can take three different forms:
\vspace{1mm}

$\bullet$ Case I: the $q-$Racah type (with $b,c,b^*,c^*\not=0$)
\begin{eqnarray}\label{specgen-I}
\begin{split}
\theta_p &= a + b q^{2p} + c q^{-2p}, 
\\
\theta^*_p &= a^* + b^* q^{2p} + c^* q^{-2p};
\end{split}
\end{eqnarray}

$\bullet$ Case II: the Racah type\footnote{If the characteristic of ${\mathbb K}$ is 2, one identifies $p(p-1)/2$ as $0$ if $p=0$ or $1$ (mod $4$), and as $1$ if $p=2$ or $p=3$ (mod $4$).} (with $c,c^*\not=0$)
\beqa
\begin{split}
\qquad\theta_p &= a + bp + cp(p-1)/2,  \nonumber\\
\theta^*_p &= a^* + b^*p + c^*p(p-1)/2;  \nonumber
\end{split}
\eeqa

$\bullet$ Case III (the characteristic of ${\mathbb K}$ is not 2)
\begin{eqnarray}
&&\theta_p = a + b(-1)^p  + cp (-1)^p,\nonumber
\\
&&\theta^*_p = a^* +  b^*(-1)^p  + c^* p (-1)^p\nonumber.
\end{eqnarray}

Here  $a,a^*, b,\;b^*,\ c ,\;c^*$ are scalars and $q\not=0, \ q^2 \not=1,
\ q^2 \not=-1$. In this case,  we say that $A,A^*$ is  a tridiagonal pair of $q-$Racah (case I) or Racah (case II) type \cite[Theorem~5.3]{NT:muqrac}.  Note that the cases listed above are the   `generic' cases  where the  parameters are nonzero. Other examples  with some of the parameters set to zero can be found in \cite{Ter033,ITaug}. 
\vspace{1mm}

For the analysis presented below, the notion of TD system \cite{Ter01} is needed. Let ${\rm End}(V)$ denote the ${\mathbb K}$-algebra of all linear transformations from $V$ to $V$. Let $A$ denote a diagonalizable element of $\mbox{\rm End}(V)$.
Let $\{V_p\}_{p=0}^d$ (resp. $\{\theta_p\}_{p=0}^d$) denote an ordering of the eigenspaces (resp. eigenvalues) of $A$. For $0 \leq p \leq d$, define $E_p \in 
\mbox{\rm End}(V)$ 
such that 
$$
(E_p-\one)V_p=0 \qquad \text{and} \qquad E_pV_m=0 \quad \text{for} \quad m \neq p \;\; (0 \leq m \leq d)\ ,
$$
where $\one$ denotes the identity of $\mathrm{End}(V)$. We call $E_p$ the {\em primitive idempotent} of $A$ corresponding to $V_p$
(or $\theta_p$). Observe that
(i) $\one=\sum_{p=0}^d E_p$;
(ii) $E_pE_m=\delta_{p,m}E_p$ $(0 \leq p,m \leq d)$;
(iii) $V_p=E_pV$ $(0 \leq p \leq d)$;
(iv) $A=\sum_{p=0}^d \theta_p E_p$.
Let $A,A^*$ denote a TD pair on $V$.
An ordering of the primitive idempotents 
 of $A$ (resp. $A^*$)
is said to be {\em standard} whenever
the corresponding ordering of the eigenspaces of $A$ (resp. $A^*$)
is standard.
\begin{subdefn}
{\rm \cite[Definition~2.1]{Ter01}}
 \label{def:TDsystem} 
\rm
A {\it tridiagonal system} (or {\it  $TD$ system}) on $V$ is a sequence
\[
 \Phi=(A;\{E_p\}_{p=0}^d;A^*;\{E^*_p\}_{p=0}^d)
\]
that satisfies:
\begin{itemize}
\item[(i)]
$A,A^*$ is a TD pair on $V$,
\item[(ii)]
$\{E_p\}_{p=0}^d$ is a standard ordering
of the primitive idempotents of $A$,
\item[(iii)]
$\{E^*_p\}_{p=0}^d$ is a standard ordering
of the primitive idempotents of $A^*$.
\end{itemize}
\end{subdefn}

\begin{sublem}{\rm \cite[Lemma~2.2]{INT}}
\label{lem:triplep}
Let $(A; \lbrace E_p\rbrace_{p=0}^d; A^*; \lbrace E^*_p\rbrace_{p=0}^d)$
denote a TD system. Then the following holds for $0 \leq p,m,r\leq d$:
\begin{equation*}
\begin{split}
&\;\mathrm{(i)}\qquad\; E^*_pA^rE^*_m=0 \\
&\mathrm{(ii)}\qquad E_pA^{*r}E_m=0 
\end{split}
\qquad \text{if} \quad |p-m|>r\ . \qquad\qquad\qquad\qquad\qquad\qquad\qquad\qquad\qquad\qquad\qquad
\end{equation*}
\end{sublem}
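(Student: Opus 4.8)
The plan is to prove statement (i) by induction on $r$ and then obtain (ii) by interchanging the roles of $A$ and $A^*$. Throughout I use the elementary properties of the primitive idempotents listed before Definition~\ref{def:TDsystem}: that $E^*_p$ acts as the identity on $V^*_p$ and annihilates $V^*_s$ for $s\neq p$, together with the orthogonality $E^*_pE^*_s=\delta_{p,s}E^*_p$ and the completeness $\one=\sum_{s=0}^{d}E^*_s$ (valid for $A^*$ since it is diagonalizable of diameter $d$).

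First I would dispose of the two building blocks. For $r=0$ the claim reads $E^*_pE^*_m=\delta_{p,m}E^*_p$, which vanishes whenever $|p-m|>0$. For $r=1$ I would reformulate condition~(iii) of Definition~\ref{tdp} in idempotent language: since $AV^*_m=AE^*_mV\subseteq V^*_{m-1}+V^*_m+V^*_{m+1}$ and $E^*_p$ kills every $V^*_s$ with $s\neq p$, the operator $E^*_pAE^*_m$ is zero as soon as $p\notin\{m-1,m,m+1\}$, i.e. whenever $|p-m|>1$. This is the only place where the defining tridiagonal property is invoked.

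For the inductive step, assuming the statement for $r-1$, I would factor $A^r=A\,A^{r-1}$ and insert the resolution of the identity between the two factors:
\begin{equation*}
E^*_pA^rE^*_m=E^*_pA\Bigl(\sum_{s=0}^{d}E^*_s\Bigr)A^{r-1}E^*_m=\sum_{s=0}^{d}\bigl(E^*_pAE^*_s\bigr)\bigl(E^*_sA^{r-1}E^*_m\bigr).
\end{equation*}
By the $r=1$ case the factor $E^*_pAE^*_s$ vanishes unless $|p-s|\le 1$, and by the induction hypothesis the factor $E^*_sA^{r-1}E^*_m$ vanishes unless $|s-m|\le r-1$. Hence a summand can survive only when both inequalities hold, in which case the triangle inequality gives $|p-m|\le|p-s|+|s-m|\le r$. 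Therefore, if $|p-m|>r$ every summand is zero and the whole expression vanishes, completing the induction and establishing~(i).

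Finally, statement~(ii) is the mirror image: running the identical argument with $A$ and $A^*$, with $E_p$ and $E^*_p$, and with $V_p$ and $V^*_p$ interchanged---now using condition~(ii) of Definition~\ref{tdp} in place of~(iii)---yields $E_pA^{*r}E_m=0$ for $|p-m|>r$. I expect no genuine obstacle here: the content is purely combinatorial, and the only points demanding care are the clean translation of the tridiagonal inclusions into the vanishing of the sandwiched operators and the index bookkeeping in the triangle-inequality step; there is no analytic difficulty.
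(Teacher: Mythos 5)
Your proof is correct and is essentially the standard argument behind the cited result \cite[Lemma~2.2]{INT}, which the paper quotes without reproducing a proof: inserting the resolution of the identity $\one=\sum_s E^*_s$, reducing to the $r=1$ case via the tridiagonal inclusion (iii) of Definition~\ref{tdp}, and closing with the triangle inequality is exactly the path-counting induction used there. The symmetric deduction of (ii) from condition (ii) of Definition~\ref{tdp} is likewise how the reference handles it, so there is nothing to add.
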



\subsection{Tridiagonal algebras}
The theory of tridiagonal pairs is closely related with the representation theory of tridiagonal algebras. Tridiagonal algebras are defined as follows:
\begin{subdefn}[See \cite{Ter03}, Definition 3.9]\label{TDgendef}
Let  $\beta, \gamma, \gamma^*, \rho, \rho^* $ denote scalars in $\K$.
\textit{The tridiagonal algebra} (or TD algebra)  $T=T(\beta, \gamma, \gamma^*,
\rho, \rho^*)$ is the associative $\K$-algebra with unit  generated by two elements ${\textsf A}$, ${\textsf A}^*$
subject to the relations 
\beqa
\lbrack {\textsf A},{\textsf A}^2{\textsf A}^*-\beta {\textsf A}{\textsf A}^*{\textsf A} + {\textsf A}^*{\textsf A}^2 -\gamma ({\textsf A}{\textsf A}^*+{\textsf A}^*{\textsf A})-\rho {\textsf A}^*\rbrack 
&=&0,
\label{TD1}
\\
\lbrack {\textsf A}^*,{\textsf A}^{*2}{\textsf A}-\beta {\textsf A}^*{\textsf A}{\textsf A}^* + {\textsf A}{\textsf A}^{*2} -
\gamma^* ({\textsf A}^*{\textsf A}+{\textsf A}{\textsf A}^*)-\rho^* {\textsf A} \rbrack
&=&0.
\label{TD2}
\eeqa
${\textsf A}$ and ${\textsf A}^*$ are called the standard generators of $T$. The relations (\ref{TD1}) and (\ref{TD2}) are called the tridiagonal (TD) relations.
\end{subdefn}

Every TD pair comes from an irreducible  finite dimensional module of the tridiagonal algebra. The explicit connection between the representation theory of tridiagonal algebras and the theory of tridiagonal pairs, as defined in Definition \ref{tdp}, is established by the following theorem:
\begin{subthm}[See \cite{Ter01}, Theorem 10.1]\label{thmTDqOA}
Let  $A, A^*$ denote a TD pair
over $\K$.
Then
there exists a sequence of  
 scalars
$\beta, \gamma, \gamma^*, \rho, \rho^* $ taken from $\K$
such that $A,A^*$ satisfy the tridiagonal relations (\ref{TD1}) and~(\ref{TD2}). 
The parameter sequence $\beta, \gamma, \gamma^*, \rho, \rho^* $ is uniquely determined by the pair if the diameter is at least~$3$.
\end{subthm}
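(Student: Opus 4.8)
The plan is to work entirely with the primitive idempotents $E_p, E^*_p$ of the pair and to reduce each relation to a short list of scalar identities among the eigenvalues $\theta_p,\theta^*_p$. I will prove (\ref{TD1}); relation (\ref{TD2}) then follows by interchanging the roles of $A$ and $A^*$ (and of the two eigenvalue sequences) throughout. First I fix the parameters. Invoking the classification of the eigenvalue sequences recalled above (Cases I--III), I observe that in every case there is a \emph{single} scalar $\beta$ for which both
\[
\gamma := \theta_{p-1} - \beta\theta_p + \theta_{p+1}
\qquad\text{and}\qquad
\gamma^* := \theta^*_{p-1} - \beta\theta^*_p + \theta^*_{p+1}
\]
are independent of $p$ (explicitly $\beta=q^2+q^{-2}$ in Case I, $\beta=2$ in Case II, $\beta=-2$ in Case III), and for which
\[
\rho := \theta_{p-1}^2 - \beta\,\theta_{p-1}\theta_p + \theta_p^2 - \gamma(\theta_{p-1}+\theta_p)
\]
together with its starred analogue $\rho^*$ are likewise independent of $p$. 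The reason for writing $\rho$ in this symmetric form is the key identity it encodes: for any two consecutive indices one has $\theta_i^2 - \beta\theta_i\theta_j + \theta_j^2 - \gamma(\theta_i+\theta_j)=\rho$ whenever $|i-j|=1$.

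Next I carry out the main computation. Put $Q := A^2A^* - \beta AA^*A + A^*A^2 - \gamma(AA^*+A^*A) - \rho A^*$, so that the left-hand side of (\ref{TD1}) equals $[A,Q]$. Sandwiching between idempotents and repeatedly using $AE_p=\theta_p E_p=E_pA$ gives
\[
E_i\,Q\,E_j = \bigl(\theta_i^2 - \beta\theta_i\theta_j + \theta_j^2 - \gamma(\theta_i+\theta_j) - \rho\bigr)\,E_i A^* E_j .
\]
By \lemref{lem:triplep}, $E_i A^* E_j=0$ as soon as $|i-j|>1$, so only $|i-j|\le 1$ survive; and for $|i-j|=1$ the scalar prefactor vanishes by the consecutive-index identity of the previous step. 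Hence $E_i\,Q\,E_j=0$ for all $i\neq j$. Since $[A,Q]=\sum_{i,j}(\theta_i-\theta_j)\,E_i\,Q\,E_j$ and the factor $\theta_i-\theta_j$ annihilates the diagonal terms $i=j$, every summand vanishes and $[A,Q]=0$, which is (\ref{TD1}). Running the identical argument with $(A,E_p,\theta_p)$ replaced by $(A^*,E^*_p,\theta^*_p)$ yields (\ref{TD2}).

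For the uniqueness claim when $d\ge 3$, I equate the $p=1$ and $p=2$ values of $\theta_{p-1}-\beta\theta_p+\theta_{p+1}$, obtaining $(\theta_0-\theta_1)+(\theta_2-\theta_3)=\beta(\theta_1-\theta_2)$. Because distinct eigenspaces carry distinct eigenvalues, $\theta_1\neq\theta_2$, so $\beta$ is forced; then $\gamma$ and $\rho$ (and symmetrically $\gamma^*,\rho^*$) are determined by their defining formulas. This argument needs the four eigenvalues $\theta_0,\dots,\theta_3$, i.e. $d\ge 3$; for smaller diameter the linear system for $\beta$ is underdetermined and uniqueness genuinely fails.

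The main obstacle is not the idempotent computation, which is essentially formal once the eigenvalue identities are available, but rather the input that one \emph{common} $\beta$ simultaneously governs the $\theta_p$ and the $\theta^*_p$ sequences. I would supply this from the classification cited above; in a self-contained treatment this coincidence is the substantive point, as it requires the structure theory of TD systems to exclude independent values of $\beta$ for the two eigenvalue sequences.
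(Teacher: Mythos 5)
You should first note a structural point: the paper does not prove this theorem at all --- it is imported verbatim from \cite{Ter01}, Theorem 10.1 --- so your proposal can only be measured against the standard argument and against the techniques this paper itself deploys. On that score your existence half is correct and is, in fact, precisely the idempotent-sandwich method the paper uses later for its own results (compare the proofs of Proposition \ref{prop-qOA} and Theorems \ref{prop:mixed-rel-k-zero}, \ref{propNOns}, where $E^{(N)}_i \Delta E^{(N)}_j = p(\theta_i,\theta_j)\, E^{(N)}_i \cW_1 E^{(N)}_j$ plays exactly the role of your $E_i Q E_j$ computation): the expansion of $E_i Q E_j$, the use of Lemma \ref{lem:triplep} for $|i-j|>1$, the vanishing of the scalar prefactor for $|i-j|=1$, and the decomposition $[A,Q]=\sum_{i,j}(\theta_i-\theta_j)E_iQE_j$ are all sound. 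Two remarks there. First, your separate requirement that $\rho$ be $p$-independent is automatic once $\gamma$ is: writing $f(p)=\theta_{p-1}^2-\beta\theta_{p-1}\theta_p+\theta_p^2-\gamma(\theta_{p-1}+\theta_p)$ one has $f(p+1)-f(p)=(\theta_{p+1}-\theta_{p-1})(\theta_{p+1}+\theta_{p-1}-\beta\theta_p-\gamma)=0$. Second, be careful with the classification input: the paper explicitly warns that Cases I--III are only the \emph{generic} forms (nonzero parameters), so to cover all TD pairs you must invoke the full classification of \cite{Ter03}, including the degenerate eigenvalue arrays; moreover, in the primary literature the common-$\beta$ three-term recurrence for the two eigenvalue sequences is established as part of the structure theory of TD systems, so your honest flagging of this as the substantive external input is appropriate, but the citation as you state it is slightly too narrow.

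The genuine gap is in your uniqueness argument. You ``equate the $p=1$ and $p=2$ values of $\theta_{p-1}-\beta\theta_p+\theta_{p+1}$,'' but this presupposes that \emph{any} parameter sequence for which (\ref{TD1})--(\ref{TD2}) hold makes that expression constant in $p$ --- and that converse implication is not free. To extract it from the relations you must sandwich: $0=E_i[A,Q]E_{i+1}=(\theta_i-\theta_{i+1})\,p(\theta_i,\theta_{i+1})\,E_iA^*E_{i+1}$ with $p(x,y)=x^2-\beta xy+y^2-\gamma(x+y)-\rho$, and then you need $E_iA^*E_{i+1}\neq 0$ for $0\le i\le d-1$ to conclude $p(\theta_i,\theta_{i+1})=0$. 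That nonvanishing is exactly where irreducibility (condition (iv) of Definition \ref{tdp}) enters: if $E_{i+1}A^*E_i=0$ for some $i<d$, then $W=V_0+\cdots+V_i$ would be invariant under both $A$ and $A^*$, contradicting (iv). Only after this step does subtracting consecutive instances of $p(\theta_{i-1},\theta_i)=p(\theta_i,\theta_{i+1})=0$, together with $\theta_{i-1}\neq\theta_{i+1}$, yield the constancy of $\theta_{i-1}-\beta\theta_i+\theta_{i+1}$, after which your $d\ge 3$ computation correctly pins down $\beta$ and hence $\gamma,\rho$ (and $\gamma^*,\rho^*$ by the symmetric argument). As written, the uniqueness claim is unproved without this ingredient. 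Your closing observation that uniqueness fails for $d\le 2$ is correct: for $d=2$ every choice of $\beta$ admits consistent $\gamma,\rho$, by the same telescoping identity $f(2)-f(1)=0$ noted above.
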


\subsection{Higher-order relations}
Given a TD pair $A$, $A^*$ it is possible to show that, in addition to the two basic relations (\ref{TD1}), (\ref{TD2}), $A$ and $A^*$ satisfy an infinite family of pairs of polynomial relations of higher degree. Indeed, recall that $\{\theta_i\}_{i=0}^d,\{\theta^*_i\}_{i=0}^d$ denote the eigenvalues of $A$ and $A^*$, respectively. 
\begin{sublem}{\cite{BV}}\label{lem:polypart} For each positive integer $s$, there exist scalars $\beta_s,\gamma_s,{\gamma^*}_s,\delta_s,\delta^*_s$ in $\K$ such that 
\beqa
\begin{split}
\theta_i^2 - \beta_s\theta_i\theta_j  + \theta_j^2 - \gamma_s (\theta_i+\theta_j) -\delta_s&=0 \ ,\\
{\theta_i^*}^2 - \beta_s{\theta_i^*}{\theta_j^*}  + {\theta_j^*}^2 - \gamma^*_s (\theta_i^*+\theta_j^*) -\delta^*_s &=0\ ,
\end{split}
 \qquad \text{for} \quad 0\leq i,j\leq d \quad \text{with} \quad|i-j|=s\nonumber.
\eeqa
\end{sublem}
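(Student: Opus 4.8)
\emph{The plan} is to reduce the statement to the explicit shapes of the two eigenvalue sequences furnished by the classification and then to eliminate the index. By \cite[Theorem~4.4]{Ter03} (together with \cite[Theorem~5.3]{NT:muqrac}) the sequences $\{\theta_p\}_{p=0}^d$ and $\{\theta_p^*\}_{p=0}^d$ each fall into one of the three cases recalled after Definition~\ref{tdp}, and, decisively, they belong to the \emph{same} case governed by the \emph{same} parameter $q$; this is already visible in~(\ref{specgen-I}). Consequently the two asserted identities differ only by the substitution $\theta\mapsto\theta^*$, $(\gamma_s,\delta_s)\mapsto(\gamma_s^*,\delta_s^*)$ with $\beta_s$ held fixed, so it is enough to establish the unstarred relation and to verify that the resulting $\beta_s$ is a function of $q$ alone. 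The unifying observation is that in every case $\theta_p$ solves a three-term linear recurrence with characteristic polynomial $(t-1)(t^2-\beta t+1)$, where $\beta=q^2+q^{-2}$, $2$, $-2$ in Cases~I, II, III; writing $\eta+\eta^{-1}=\beta$ (so $\eta=q^2,1,-1$), the correct coefficient is $\beta_s=\eta^s+\eta^{-s}$, which depends on $q$ only.

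\emph{The core computation} is the elimination in Case~I. Fix $s$, put $X=\theta_i-a$, $Y=\theta_{i+s}-a$, $u=q^{2i}$, so that $X=bu+cu^{-1}$ and $Y=bq^{2s}u+cq^{-2s}u^{-1}$. Reading this as a linear system in the unknowns $u,u^{-1}$, whose determinant $bc(q^{-2s}-q^{2s})$ is nonzero whenever $q^{4s}\neq1$ (automatic for generic $q$-Racah $q$), I solve for $u$ and $u^{-1}$ and impose $u\cdot u^{-1}=1$. A short expansion gives
\[
X^2-(q^{2s}+q^{-2s})\,XY+Y^2+bc\,(q^{2s}-q^{-2s})^2=0 .
\]
Re-expanding $X,Y$ in terms of $\theta_i,\theta_{i+s}$ produces precisely the claimed quadratic, with $\beta_s=q^{2s}+q^{-2s}$, $\gamma_s=a(2-\beta_s)$ and $\delta_s=-(2-\beta_s)a^2-bc(q^{2s}-q^{-2s})^2$. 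Since these scalars are free of $i$, the identity holds for every admissible pair with $|i-j|=s$.

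\emph{The degenerate Cases~II and~III} follow from the same idea, but the $2\times2$ elimination collapses and must be done directly. In Case~II both $\theta_p$ and $\theta_{p+s}$ are quadratic in $p$, so $\theta_{p+s}-\theta_p$ is linear in $p$; solving for $p$ and substituting back yields the relation with $\beta_s=2$ (whence $\theta_i^2-2\theta_i\theta_j+\theta_j^2=(\theta_i-\theta_j)^2$) and $\gamma_s=cs^2$. In Case~III one writes $\theta_p=a+(-1)^p(b+cp)$ and splits on the parity of $s$: for $s$ even, $\theta_i-\theta_{i+s}=\mp cs$ is constant, forcing $\beta_s=2$, $\gamma_s=0$, $\delta_s=c^2s^2$; for $s$ odd, $\theta_i+\theta_{i+s}=2a\mp cs$ and the combination $(\theta_i+\theta_{i+s})^2$ is the relevant one, giving $\beta_s=-2$, $\gamma_s=4a$, $\delta_s=c^2s^2-4a^2$. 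In all cases $\beta_s=\eta^s+\eta^{-s}$ is recovered, and repeating the computation verbatim on the starred sequence yields the second identity with the \emph{same} $\beta_s$ and with $\gamma_s^*,\delta_s^*$ assembled from the starred parameters.

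\emph{The main obstacle} I foresee is not any individual computation but the simultaneous bookkeeping: one must guarantee that $\beta_s,\gamma_s,\delta_s$ come out independent of $i$ \emph{and} that a single $\beta_s$ serves both the $\theta$- and $\theta^*$-relations. The former is automatic (the elimination returns $i$-free coefficients); the latter is exactly where the classification is indispensable, since two unrelated quadratic sequences would have no reason to share a quadratic form. The one genuinely delicate point is the confluent Case~III (and the exceptional $q$-Racah sub-cases with $q^{4s}=1$), where the Case~I determinant vanishes and the parity of $s$ flips the sign of $\beta_s$; these must be handled by the direct parity argument above rather than by the generic elimination.
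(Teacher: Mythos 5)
Your proof is correct, and it takes the route this paper implicitly relies on: the paper states the lemma with a citation to \cite{BV} and reproduces no argument, and the proof in \cite{BV} is precisely this kind of case-by-case verification from the explicit eigenvalue forms supplied by the classification \cite{Ter03,NT:muqrac}, with $\beta_s=\eta^s+\eta^{-s}$ where $\eta+\eta^{-1}=\beta$. I verified your formulas: in Case I, $\beta_s=q^{2s}+q^{-2s}$, $\gamma_s=a(2-\beta_s)$, $\delta_s=-(2-\beta_s)a^2-bc(q^{2s}-q^{-2s})^2$ is consistent with the specialization $a=0$ used later in the paper, where \eqref{polyqons} has $\frac{[2s]_{q^2}}{[s]_{q^2}}=q^{2s}+q^{-2s}$ and $[s]^2_{q^2}\rho=-bc(q^{2s}-q^{-2s})^2$; your Case II and Case III coefficients also check out.

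Two refinements. First, your elimination in Case I is more elaborate than needed: the identity $X^2-(q^{2s}+q^{-2s})XY+Y^2+bc(q^{2s}-q^{-2s})^2=0$ follows from direct expansion, since the coefficients of $u^{2}$ and $u^{-2}$ (i.e., of $q^{\pm 4i}$) cancel identically for all $q\neq 0$; hence the invertibility hypothesis $q^{4s}\neq 1$ is never actually used, and the ``exceptional $q$-Racah sub-cases'' you flag at the end dissolve --- indeed, for $s\leq d$ the condition $q^{2s}=\pm 1$ would force $\theta_{i+s}=\theta_i$ or $X=-Y$, and the degenerate identity $(X\mp Y)^2=0$ still holds (while equality of eigenvalues is anyway excluded by diagonalizability with $d+1$ distinct eigenvalues), and for $s>d$ the condition $|i-j|=s$ is vacuous. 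Second, your observation that a single $\beta_s$ must serve both the $\theta$- and $\theta^*$-relations is indeed the one point where the classification is essential, and you ground it correctly: \cite[Theorem~4.4]{Ter03} produces both sequences $\{\theta_p\}$ and $\{\theta^*_p\}$ within the same case, governed by a common $\beta$ (equivalently a common $q$), exactly as \eqref{specgen-I} displays.
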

\begin{subdefn}{\cite{BV}}
Let $x,y$ denote commuting indeterminates. For each positive integer $r$, we define the polynomials $p_r(x,y)$ and $p^*_r(x,y)$ as follows:
\beqa
p_r(x,y) = (x-y)\prod_{s=1}^{r} (x^2-\beta_s xy +y^2 - \gamma_s (x+y) -\delta_s)\ ,\\
p^*_r(x,y) = (x-y)\prod_{s=1}^{r} (x^2-\beta_s xy +y^2 - \gamma^*_s (x+y) -\delta^*_s)\ .
\eeqa
Observe $p_r(x,y)$ and $p^*_r(x,y)$ have a total degree $2r+1$ in $x,y$.
\end{subdefn}

As a consequence of Lemma \ref{lem:polypart}, observe that $p_r(\theta_i,\theta_j)=0$ and $p^*_r(\theta^*_i,\theta^*_j)=0$ if $|i-j|\leq r$, for $0\leq i,j\leq d$. Combining these facts together with Lemma \ref{lem:triplep}, higher-order polynomial relations generalizing (\ref{TD1}), (\ref{TD2})  that are satisfied by a TD pair $A,A^*$ can be derived. Namely:
\begin{subthm}{\cite{BV}}
\label{hqdg} 
 Let  $A, A^*$ denote a TD pair
over $\K$.  For each positive integer $r$,
\beqa
\sum_{i,j=0}^{i+j\leq 2r+1} a_{ij} A^i {A^*}^rA^j =0 \ , \qquad \sum_{i,j=0}^{i+j\leq 2r+1} a^*_{ij} {A^*}^i {A}^r{A^*}^j =0 \  \label{hqdgr}
\eeqa
where the scalars $a_{ij} ,a^*_{ij}$ are defined by:
\beqa
p_r(x,y)=  \sum_{i,j=0}^{i+j\leq 2r+1} a_{ij} x^i y^j \quad  \mbox{and} \quad  p^*_r(x,y)=\sum_{i,j=0}^{i+j\leq 2r+1} a^*_{ij} x^i y^j \ .\label{polyr}
\eeqa
\end{subthm}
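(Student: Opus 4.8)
The plan is to insert the spectral decompositions of $A$ and $A^*$ and reduce each identity to a term-by-term vanishing argument governed by the distance between idempotent labels. I begin with the first relation. Writing $A=\sum_{p=0}^d\theta_p E_p$, so that $A^i=\sum_{p=0}^d\theta_p^i E_p$, I substitute into the left-hand side and collect terms using the orthogonality $E_pE_m=\delta_{p,m}E_p$. Since the central factor ${A^*}^r$ is untouched by these insertions, the double sum over $i,j$ collapses to
\[
\sum_{i,j=0}^{i+j\le 2r+1} a_{ij}A^i{A^*}^rA^j=\sum_{p,m=0}^d\Big(\sum_{i,j}a_{ij}\theta_p^i\theta_m^j\Big)E_p{A^*}^rE_m=\sum_{p,m=0}^d p_r(\theta_p,\theta_m)\,E_p{A^*}^rE_m,
\]
where the last equality is merely the definition of the scalars $a_{ij}$ in \eqref{polyr}.

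Next I would show that each summand $p_r(\theta_p,\theta_m)\,E_p{A^*}^rE_m$ is zero, distinguishing two cases according to $|p-m|$. If $|p-m|\le r$, the scalar $p_r(\theta_p,\theta_m)$ vanishes: this is the observation recorded just before the statement, which follows from the factored form $p_r(x,y)=(x-y)\prod_{s=1}^r\big(x^2-\beta_s xy+y^2-\gamma_s(x+y)-\delta_s\big)$ together with \lemref{lem:polypart}, since the linear factor kills the diagonal terms $p=m$ while the $s$-th quadratic factor annihilates exactly the pairs with $|p-m|=s$. If instead $|p-m|>r$, then the operator $E_p{A^*}^rE_m$ vanishes by \lemref{lem:triplep}(ii). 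Hence every summand is zero and the first relation in \eqref{hqdgr} holds. The second relation follows verbatim by interchanging the roles of $A$ and $A^*$: expanding ${A^*}^i$, ${A^*}^j$ through $A^*=\sum_p\theta^*_pE^*_p$ gives $\sum_{p,m}p^*_r(\theta^*_p,\theta^*_m)\,E^*_p A^r E^*_m$, and one concludes by the same dichotomy, using the vanishing of $p^*_r$ on close eigenvalues and \lemref{lem:triplep}(i) for the distant ones.

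The argument is essentially forced once the two lemmas are in hand, so there is no serious analytic obstacle; the only points requiring care are bookkeeping ones. One must check that inserting idempotents of $A$ leaves the central power ${A^*}^r$ intact, so that \lemref{lem:triplep} applies with precisely $r$ in the exponent, and that the total-degree bound $i+j\le 2r+1$ built into $p_r$ is consistent with expressing the operator as a finite sum of monomials $A^i{A^*}^rA^j$. The conceptual heart of the result lies upstream, in \lemref{lem:polypart} and the judicious choice of the polynomials $p_r$, $p^*_r$; granting these, the theorem reduces to the clean two-case vanishing argument above.
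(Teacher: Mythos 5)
Your proof is correct and follows exactly the route the paper indicates: sandwiching the sum between the primitive idempotents $E_p$, $E_m$ (resp. $E^*_p$, $E^*_m$) to reduce to $\sum_{p,m}p_r(\theta_p,\theta_m)E_p{A^*}^rE_m$, then killing each term either by the vanishing of $p_r$ on eigenvalue pairs with $|p-m|\le r$ (Lemma~\ref{lem:polypart}) or by Lemma~\ref{lem:triplep} when $|p-m|>r$. This is precisely the argument sketched in the paragraph preceding Theorem~\ref{hqdg} and the same idempotent technique the paper reuses in Proposition~\ref{prop-qOA}, so there is nothing to add.
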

In the next subsection, we describe an explicit example of a TD pair and the relations it satisfies.

\subsection{The $q-$Dolan-Grady relations and generalizations}
We now consider the \textit{reduced} parameter sequence determining the tridiagonal relations  (\ref{TD1}) and (\ref{TD2}): setting $\gamma=\gamma^*=0$. In this case,  the tridiagonal pair $A$ and $A^*$ will be denoted by ${\cW}_0$ and ${\cW}_1$, respectively.
Parametrizing $\beta$ as $\beta=q^2+q^{-2}$, the two operators then satisfy the \textit{$q-$Dolan-Grady} relations,  also known as the defining relations of the \textit{$q-$Onsager algebra} \cite{Ter03,B4}:
\beqa\label{Talg}
\begin{split}
 \big[{\cW}_0,\big[{\cW}_0,\big[{\cW}_0,{\cW}_1\big]_q\big]_{q^{-1}}\big]&=\rho\big[{\cW}_0,{\cW}_1\big]\ ,\\
\big[{\cW}_1,\big[{\cW}_1,\big[{\cW}_1,{\cW}_0\big]_q\big]_{q^{-1}}\big]&=\rho^*\big[{\cW}_1,{\cW}_0\big]\ .
\end{split}
\eeqa
In terms of the parameters in the spectra (\ref{specgen-I}) of the TD pair $\cW_0,\cW_1$ corresponding to the choice $a=a^*=0$, one has the identification \cite{Ter03}:
\beqa
\rho= -bc(q^2-q^{-2})^2 \quad \mbox{and} \quad \rho^*=-b^*c^*(q^2-q^{-2})^2.\label{rhored}
\eeqa

Besides the two basic relations above, the tridiagonal pair ${\cW}_0,{\cW}_1$ also satisfies higher-order relations generalizing (\ref{Talg}), see Theorem \ref{hqdg}. Let $r$ be a positive integer. We refer to these relations as the {\it $r$-th  higher-order $q-$Dolan-Grady relations}. Explicitely, they read \cite{BV,V}\footnote{The coefficients $a_{ij}$ and $a^*_{ij}$ introduced in (\ref{polyr}) are such that: $a_{2r+1-2p-j \ j}=(-1)^{j+p}  \rho^{p}\, {c}_{j}^{[r,p]}$, $a^*_{2r+1-2p-j \ j}=(-1)^{j+p}  (\rho^*)^{p}\, {c}_{j}^{[r,p]}$ for $ p=0,...,r$ and $a_{ij},a^*_{ij}$ are zero otherwise.}:
\beqa\label{qDGfinr} 
\begin{split}
\sum_{p=0}^{r}\sum_{j=0}^{2r+1-2p}  (-1)^{j+p}  \rho^{p}\, {c}_{j}^{[r,p]}\,  \cW_0^{2 r+1-2p-j} \cW_1^{r} \cW_0^{j}&=0\ ,\\
\sum_{p=0}^{r}\sum_{j=0}^{2r+1-2p} (-1)^{j+p} (\rho^*)^{p} \, {c}_{j}^{[r,p]}\,  \cW_1^{2 r+1-2p-j} \cW_0^{r} \cW_1^{j}&=0\  ,
\end{split} 
\eeqa
where $c_{2(r-p)+1-j}^{[r,p]} =c_{j}^{[r,p]}$ and
\beqa
\qquad &&c_{j}^{[r,p]} =  \sum_{k=0}^j   \frac{(r-p)!}{(\{\frac{j-k}{2}\})!(r-p-\{\frac{j-k}{2}\})!}
    \sum_{{\cal P}}  [s_1]^2_{q^2}...[s_p]^2_{q^2}  \frac{[2s_{p+1}]_{q^2}...[2s_{p+k}]_{q^2}}{[s_{p+1}]_{q^2}...[s_{p+k}]_{q^2}}  \label{cfinr}
\eeqa
\beqa 
\mbox{with}\quad \ \left\{\begin{array}{cc}
\!\!\! \!\!\! \!\!\! \!\!\!  \!\!\! \!\!\! \!\!\! \!\!\!   \!\!\! \!\!\! \!\!\! \!\!\! j\in \overline{0,r-p}\ , \quad s_i\in\{1,2,...,r\}\ ,\\
{\cal P}: \begin{array}{cc} \ \ s_1<\dots<s_p\ ;\quad \ s_{p+1}<\dots<s_{p+k}\ ,\\
 \{s_{1},\dots,s_{p}\} \cap \{s_{p+1},\dots,s_{p+k}\}=\emptyset \end{array}
\end{array}\right.\ .
\label{eq:cjrp}
\eeqa
Explicit examples of TD pairs of $q-$Racah type that are not Leonard pairs\footnote{For a Leonard pair, by definition eigenspaces of $A,A^*$ are one-dimensional \cite{Ter01}.} have been constructed in \cite{Bas3}. In particular, a simple example of a TD pair of $q-$Racah type is considered in Appendix~\ref{app:q-Racah}.

\medskip


\section{Cyclic tridiagonal pairs and the divided polynomials}
In this Section, we first recall the 
new concept of `cyclic tridiagonal pairs' and define the notion of cyclic tridiagonal systems. Explicit examples of cyclic TD pairs
 are given in Appendix~\ref{app:example1}. Then, we show that
 under  Assumption~\ref{ass} and the assumption in Corollary~\ref{cor}, a TD pair of $q$-Racah type gives a cyclic TD pair. We also show  that each operator of the  cyclic TD pair in any finite-dimensional representation satisfies a  polynomial equation of degree $N$. Based on that, we introduce a pair of operators that we call the divided polynomials and study their action. 
 
\subsection{Conventions}\label{DC} We introduce here several assumptions that are used in the paper.
\vspace{1mm}

We start with the definition of a cyclic tridiagonal pair, recall Definition \ref{deficitri} in the Introduction. 
 We do not repeat it here for brevity and only note that we will use the notation $V_p^{(N)}$ and $V_p^{*(N)}$ instead of $V_p$ and $V_p^*$ in Definition \ref{deficitri} to avoid a confusion with the notation used for the eigenspaces of TD pairs from Definition~\ref{tdp}.
 
 \vspace{1mm}
 We also note that
 our definition of  cyclic TD pairs could be naturally generalized by introducing a different cyclicity $N^*$ instead of $N$ in the point (iii) of Definition~\ref{deficitri}, in accordance with the definition of usual TD pairs that involves two different diameters $d$ and $\delta$. But because the two diameters are necessarily equal, recall Remark~\ref{tdp-rem}, and most of our examples of cyclic TD pairs come from specializations of the TD pairs of $q-$Racah type  we fixed $N^*=N$.
We also  say that an ordering of the eigenspaces of $C$ (resp. $C^*$)
is  {\em standard} whenever it satisfies  (\ref{eq:ct1})
 (resp.~(\ref{eq:ct2})).
 \vspace{1mm}

  In Appendix~\ref{app:first-ex}, we give a simple and explicit example of a non-trivial cyclic TD pair acting on a $4-$dimensional vector space, see (\ref{exL4:W1}). This example naturally comes from the theory of cyclic representations of  the quantum algebra $U_q(sl_2)$ at $2N$-th roots of unity.\vspace{1mm}

  \begin{subrem}
  We note that each TD pair of diameter $d$ is a cyclic TD pair of cyclicity $d+1$ (with trivial contribution of $V_0$ while acting on $V_d$ by $A^*$, etc.). But the reverse is of course not true. 
  \end{subrem}

By analogy with the Definition \ref{def:TDsystem} of tridiagonal systems, the notion of {\it cyclic TD system} can be introduced.
Let $C$ denote a diagonalizable element of  $\mathrm{End}(V)$.
Let $\{V^{(N)}_p\}_{p\in {\mathbb Z}_N}$ denote an ordering of the eigenspaces of $C$
and let $\{\theta_p\}_{p\in {\mathbb Z}_N}$ denote the corresponding ordering of
the eigenvalues of $C$. For ${p\in {\mathbb Z}_N}$, define $E^{(N)}_p \in 
\mbox{\rm End}(V)$ 
such that  
\begin{equation}\label{EN-def}
(E_p^{(N)}-\one)V^{(N)}_p =0 \qquad  \text{and} \qquad E_p^{(N)}V^{(N)}_m =0\quad  \text{for} \quad m \neq p \; ({m\in {\mathbb Z}_N}).
\end{equation}
 We call $E_p^{(N)}$ the {\em primitive idempotent} of $C$ corresponding to $V^{(N)}_p$
(or ${\theta}_p$), \textit{i.e.,} we have (i) $\one=\sum_{p\in {\mathbb Z}_N} E_p^{(N)}$ and (ii) $E_p^{(N)}E_m^{(N)}=\delta_{p,m}E_p^{(N)}$, for $m,p\in {\mathbb Z}_N$, and (iii) $C=\sum_{p\in {\mathbb Z}_N} \theta_p E_p^{(N)}$. 
Let $C,C^*$ denote a cyclic TD pair on $V$, as defined in Definition~\ref{deficitri}. An ordering of the primitive idempotents 
 of $C$ (resp. $C^*$)
is said to be {\em standard} whenever
the corresponding ordering of the eigenspaces of $C$ (resp. $C^*$)
is standard.
\begin{subdefn}
 \label{def:cTDsystem} 
\rm
By a {\it cyclic tridiagonal system} (or {\it  cyclic $TD$ system}) on $V$ we mean a sequence
\[
 \Phi^{(N)}=(C;\{E^{(N)}_p \}_{p\in {\mathbb Z}_N};C^*;\{E^{*(N)}_p \}_{p\in {\mathbb Z}_N})
\]
that satisfies (i)--(iii) below.
\begin{itemize}
\item[(i)]
$C,C^*$ is a cyclic TD pair on $V$.
\item[(ii)]
$\{E^{(N)}_p\}_{p\in {\mathbb Z}_N}$ is a standard ordering
of the primitive idempotents of $C$.
\item[(iii)]
$\{E^{*(N)}_p\}_{p\in {\mathbb Z}_N}$ is a standard ordering
of the primitive idempotents of $C^*$.
\end{itemize}
\end{subdefn}

According to the Definitions \ref{deficitri} and \ref{def:cTDsystem}, the proof of the following lemma is straightforward:
\begin{sublem}
\label{lem:ctriplep}
Let $(C;\{E^{(N)}_p \}_{p\in {\mathbb Z}_N};C^*;\{E^{*(N)}_p \}_{p\in {\mathbb Z}_N})$
denote a cyclic TD system. Then the following holds for ${p,m,r\in {\mathbb Z}_N}$  and $r<N/2$:
\begin{enumerate}
\item[\rm (i)] $E^{*(N)}_pC^rE^{*(N)}_m=0$ \quad if\quad $r<|p-m|<N-r$\ ,
\item[\rm (ii)] $E^{(N)}_pC^{*r}E^{(N)}_m=0$ \quad if \quad $r<|p-m|<N-r$\ .
\end{enumerate}
\end{sublem}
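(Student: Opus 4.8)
The plan is to imitate the proof of the non-cyclic Lemma~\ref{lem:triplep}, replacing the ordinary distance on $\{0,\dots,d\}$ by the cyclic distance on ${\mathbb Z}_N$. Throughout I write $d_N(p,m)=\min(|p-m|,\,N-|p-m|)$ for $p,m\in{\mathbb Z}_N$, with $|p-m|$ computed for the representatives in $\{0,\dots,N-1\}$, and I first record the elementary equivalence
\[
r<|p-m|<N-r \quad\Longleftrightarrow\quad d_N(p,m)>r ,
\]
which holds precisely when $r<N/2$; this is exactly why that hypothesis is imposed, and it also shows the assertion is vacuous for $r\ge N/2$, since then $d_N\le r$ always. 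I will prove part~(i), the proof of part~(ii) being identical after exchanging the roles of $C$ and $C^*$ and of the two families of idempotents, and using condition~(ii) of Definition~\ref{deficitri} in place of condition~(iii).

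First I would establish the one-step estimate
\[
E^{*(N)}_p\, C\, E^{*(N)}_m=0 \qquad\text{whenever}\qquad d_N(p,m)\ge 2 .
\]
Indeed, from \eqref{EN-def} applied to $C^*$ we have $E^{*(N)}_m V=V^{*(N)}_m$ and $E^{*(N)}_p V^{*(N)}_j=\delta_{p,j}V^{*(N)}_j$, while condition~(iii) of Definition~\ref{deficitri}, read in the standard cyclic ordering, gives $C\,V^{*(N)}_m\subseteq V^{*(N)}_{m-1}+V^{*(N)}_m+V^{*(N)}_{m+1}$. Applying $E^{*(N)}_p$ to this inclusion annihilates every term as soon as $p\notin\{m-1,m,m+1\}$, i.e. as soon as $d_N(p,m)\ge 2$.

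Next I would expand $C^r$ by inserting the resolution of the identity $\one=\sum_{j\in{\mathbb Z}_N}E^{*(N)}_j$ between consecutive factors, obtaining
\[
E^{*(N)}_p\, C^{r}\, E^{*(N)}_m
=\sum_{j_1,\dots,j_{r-1}\in{\mathbb Z}_N}
E^{*(N)}_{p}\,C\,E^{*(N)}_{j_1}\,C\,E^{*(N)}_{j_2}\cdots E^{*(N)}_{j_{r-1}}\,C\,E^{*(N)}_{m} .
\]
By the one-step estimate a summand survives only if the chain $p=j_0,j_1,\dots,j_{r-1},j_r=m$ satisfies $d_N(j_k,j_{k+1})\le 1$ for every $k$. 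Since $d_N$ is a metric on ${\mathbb Z}_N$, the triangle inequality then forces $d_N(p,m)\le\sum_{k=0}^{r-1}d_N(j_k,j_{k+1})\le r$. Hence, under the hypothesis $d_N(p,m)>r$, that is $r<|p-m|<N-r$, every summand vanishes and $E^{*(N)}_p C^{r} E^{*(N)}_m=0$, as claimed.

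The only genuinely new ingredient compared with the linear case is the bookkeeping of the cyclic distance: one must verify that $d_N$ is an honest metric on ${\mathbb Z}_N$ (in particular that it satisfies the triangle inequality) and that the chain bound $d_N(p,m)\le r$ is faithfully encoded by the stated index condition $r<|p-m|<N-r$. Both points are elementary — which is why the lemma is asserted to be straightforward — and no input beyond the standard orderings of the eigenspaces and the idempotent identities \eqref{EN-def} is required.
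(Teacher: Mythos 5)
Your proof is correct and is exactly the ``straightforward'' argument the paper has in mind when it states Lemma~\ref{lem:ctriplep} without proof: the one-step vanishing $E^{*(N)}_p C E^{*(N)}_m=0$ for $d_N(p,m)\ge 2$ coming from condition (iii) of Definition~\ref{deficitri}, insertion of the resolution of the identity $\one=\sum_j E^{*(N)}_j$, and the triangle inequality for the cyclic distance, mirroring the proof of the non-cyclic Lemma~\ref{lem:triplep}. Your preliminary observation that $r<|p-m|<N-r$ is equivalent to $d_N(p,m)>r$, and that the hypothesis $r<N/2$ makes the statement non-vacuous, is also handled correctly.
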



We assume ${\mathbb K}={\mathbb C(q)}$, for $q$ an indeterminate. Given a TD pair $(A,A^*)$ of $q-$Racah type, recall that  we have the decompositions 
\begin{equation}\label{decomp-q}
V=\bigoplus_{n=0}^d V_n(q) \qquad \mbox{or} \qquad V=\bigoplus_{n=0}^d V^*_n(q),
\end{equation}
where $V_n(q)$ (resp. $V^*_n(q)$) denotes the $A$-eigenspace
(resp. $A^*$-eigenspace) associated with the eigenvalue $\theta_n(q)$  (resp. $\theta^*_n(q)$) for $0\leq n\leq d$ given by (\ref{specgen-I}). According to (\ref{eq:t1}), with respect to an $A$-eigenspace the action of $A^*$ is:
\begin{equation}
A^* V_n(q) \subseteq V_{n-1}(q) + V_n(q)+ V_{n+1}(q) \qquad \qquad 0 \leq n \leq d,
\label{eq:t1-q}
\end{equation}
where $V_{-1}(q) = 0$ and $V_{d+1}(q)= 0$. For the rest of our discussion we will use the following assumption.
\begin{ass}\label{ass}
Let $A,A^*$ be a TD pair of q-Racah type of diameter $d$.
We assume 
there exist two bases in $V$  where the matrix representing 
$A$ is diagonal (resp. block tridiagonal)  and the one representing  $A^*$ is block tridiagonal (resp. diagonal) and such that, specializing $q$ to $e^{\rmi\pi/N}$, we have\footnote{We thank  P. Terwilliger  for stressing us the importance of the point (ii).}
\begin{itemize}
\item[(i)] all entries of $A$ and $A^*$ are finite with respect to the bases,
\item[(ii)] all entries of the transition matrix between the  two bases are finite,
\item[(iii)] $b,c,b^*,c^*$ from~\eqref{specgen-I} are such that $c/b\neq q^{2m}$ and $c^*/b^*\neq q^{2m'}$ for any  $m,m'\in \overline{0,N-1}$. 
\end{itemize}
\end{ass}

\begin{rem}
Two different choices of the pairwise bases from Assumption~\ref{ass} may not give the same pair of linear transformations under the specialization  $q$ to $e^{\rmi\pi/N}$, we do not study this important question here. But our statements about properties of cyclic TD pairs obtained from TD pairs of $q-$Racah type do not depend on such a choice.
\end{rem}

From now on, we abuse  the convention that $q$ stands for an indeterminate  by writing $q=e^{\frac{\rmi\pi}{N}}$ for the specialization of the indeterminate $q$ to the root of unity $e^{\frac{\rmi\pi}{N}}$.
A large family of examples satisfying Assumption~\ref{ass} can be explicitly constructed. A simple example is
given in Appendix~\ref{app:q-Racah}. 

\begin{Prop}\label{lem1}  For $q=e^{\frac{\rmi\pi}{N}}$ and integer  $2\leq N\leq d$, the operators $A$ and $A^*$ from Assumption~\ref{ass} have $N$ distinct eigenvalues. They read:
\begin{eqnarray}\label{spec-root}
\begin{split}
\theta_t &= a + b q^{2t} + c q^{-2t}, 
\\
\theta^*_t &= a^* + b^* q^{2t} + c^* q^{-2t},
\end{split}
 \qquad \mbox{for}  \quad t=0,1,...,N-1.
\end{eqnarray}
Let $V_t^{(N)}$ (resp. $V_t^{*(N)}$) denotes the eigenspace associated with $\theta_t$ (resp. $\theta^*_t$). One has the decomposition:
\beqa
V_t^{(N)}= \bigoplus_{k=0}^{\lfloor\frac{d}{N}\rfloor} V_{t+kN}(q)\Bigl|_{q=e^{\frac{\rmi\pi}{N}}} \qquad \text{and}
 \qquad V_t^{*(N)}= \bigoplus_{k=0}^{\lfloor\frac{d}{N}\rfloor} V^{*}_{t+kN}(q)\Bigl|_{q=e^{\frac{\rmi\pi}{N}}}\label{esp}
\eeqa
with the action
\beq\label{tdroot1}
A^*\, V_t^{(N)} \subseteq V_{t+1}^{(N)} + V_t^{(N)} + V_{t-1}^{(N)}  \qquad \qquad (0 \leq t \leq N-1),
\eeq
where $V_{-1}^{(N)} = V_{N-1}^{(N)}$ and $V_{N}^{(N)} = V_{0}^{(N)}$
and
\beq\label{tdroot2}
A V_t^{*(N)} \subseteq V_{t+1}^{*(N)} + V_t^{*(N)} + V_{t-1}^{*(N)}  \qquad \qquad (0 \leq t \leq N-1),
\eeq
where $V_{-1}^{*(N)} = V_{N-1}^{*(N)}$ and $V_{N}^{*(N)} = V_{0}^{*(N)}$.

\vspace{1mm}
For $N>d$, the operator $A$ (resp. $A^*$) from Assumption~\ref{ass} has $d+1$ distinct eigenvalues $\theta_t$ (resp.~$\theta^*_t$)  and the corresponding eigenspaces $V^{(N)}_t=V_t(e^{\frac{\rmi\pi}{N}})$ (resp. $V^{*(N)}_t=V^*_t(e^{\frac{\rmi\pi}{N}})$), and  acts as in~(\ref{eq:t2}) (resp.~(\ref{eq:t1})).
\end{Prop}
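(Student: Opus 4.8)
The plan is to exploit the single arithmetic fact that drives everything: since $q=e^{\rmi\pi/N}$ is a primitive $2N$-th root of unity, $q^2$ is a primitive $N$-th root of unity, so the functions $p\mapsto q^{2p}$ and $p\mapsto q^{-2p}$ have period $N$. Reading off the spectra~(\ref{specgen-I}) this immediately gives $\theta_{p+N}=\theta_p$ and $\theta^*_{p+N}=\theta^*_p$, so only the residue of $p$ modulo $N$ matters. First I would settle which of the values $\theta_0,\dots,\theta_{N-1}$ are genuinely distinct. A short computation gives $\theta_t-\theta_{t'}=(q^{2t}-q^{2t'})(b-c\,q^{-2t-2t'})$, so $\theta_t=\theta_{t'}$ forces one of the two factors to vanish; for distinct $t,t'$ in $\overline{0,N-1}$ the first factor is nonzero because $q^2$ is a primitive $N$-th root of unity, so equality would force $c/b=q^{2(t+t')}=q^{2m}$ with $m=(t+t')\bmod N\in\overline{0,N-1}$. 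This is exactly what part~(iii) of Assumption~\ref{ass} forbids (note $b\neq0$ in Case~I), and the identical argument with $c^*/b^*$ handles $\theta^*$. Combined with the periodicity this yields exactly $N$ distinct eigenvalues $\theta_0,\dots,\theta_{N-1}$ when $2\le N\le d$ (all $N$ residues occur since $p$ runs through $0,\dots,d\ge N$), and exactly $d+1$ distinct eigenvalues when $N>d$ (all of $\theta_0,\dots,\theta_d$ survive, since then $p,p'<N$).

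For the eigenspace decomposition~(\ref{esp}) I would work in the basis furnished by Assumption~\ref{ass} in which $A$ is diagonal. Parts~(i) and~(ii) of the Assumption guarantee that all entries and the transition matrix stay finite at $q=e^{\rmi\pi/N}$, so the specialized $A$ is still diagonal in this specialized basis; its eigenspace for $\theta_t$ is then the coordinate subspace spanned by those basis vectors whose generic eigenvalue $\theta_n(q)$ collapses onto $\theta_t$, namely those with $n\equiv t\pmod N$. This is precisely $\bigoplus_{k} V_{t+kN}(q)\big|_{q=e^{\rmi\pi/N}}$ (with the convention $V_n=0$ for $n>d$), which is~(\ref{esp}); the statement for $V^{*(N)}_t$ follows from the dual diagonalizing basis.

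To obtain the cyclic action~(\ref{tdroot1}) I would specialize the generic inclusion~(\ref{eq:t1-q}), namely $A^*V_n(q)\subseteq V_{n-1}(q)+V_n(q)+V_{n+1}(q)$, which persists at $q=e^{\rmi\pi/N}$ because the block-tridiagonal support of $A^*$ in the $A$-eigenbasis can only shrink under specialization: entries that vanish identically in $q$ stay zero, and finiteness (part~(i)) rules out any blow-up. Summing over all $n\equiv t\pmod N$ and reducing the indices $n-1,n,n+1$ modulo $N$ sends the three neighbouring blocks into $V^{(N)}_{t-1}$, $V^{(N)}_t$, $V^{(N)}_{t+1}$ (a pair at distance one in $n$ remains at cyclic distance at most one in $t$), which gives the stated inclusion under the cyclic conventions $V^{(N)}_{-1}=V^{(N)}_{N-1}$, $V^{(N)}_{N}=V^{(N)}_0$. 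The boundary terms $V_{-1}=V_{d+1}=0$ of the original linear chain are now supplemented by wrap-around contributions from the higher blocks $n\ge N$, present precisely because $N\le d$, making the cyclicity genuine; the dual inclusion~(\ref{tdroot2}) is identical. For $N>d$ no two of $\theta_0,\dots,\theta_d$ coincide, so no blocks merge, there is no wrap-around, and $A,A^*$ act exactly through the ordinary TD inclusions~(\ref{eq:t2}),~(\ref{eq:t1}).

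The step I expect to demand the most care is the eigenspace identification of the second paragraph: a priori, specializing a parameter can render a diagonalizable matrix defective or make eigenspace dimensions jump, so the argument genuinely relies on Assumption~\ref{ass}(i)--(ii) to keep $A$ diagonalizable with a finite, invertible change of basis at the root of unity. Working directly in the diagonalizing basis, rather than attempting to take limits of the abstractly defined spectral projectors $E_n(q)$, is what makes the collapse of eigenspaces transparent and sidesteps this pitfall.
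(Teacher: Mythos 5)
Your proposal is correct and follows essentially the same route as the paper's proof: distinctness of $\theta_0,\dots,\theta_{N-1}$ via Assumption~\ref{ass}(iii) (your factorization $\theta_t-\theta_{t'}=(q^{2t}-q^{2t'})\bigl(b-c\,q^{-2(t+t')}\bigr)$ simply makes explicit the reduction to $c/b=q^{2m}$ that the paper asserts), collapse of the generic eigenspaces $V_n(q)$ according to $n \bmod N$ in the diagonalizing basis kept finite by Assumption~\ref{ass}(i)--(ii), specialization of the block-tridiagonal inclusion~\eqref{eq:t1-q} with indices reduced modulo $N$, and the trivial case $N>d$. Your closing remark that the wrap-around terms make the cyclicity ``genuine'' is a harmless side comment not needed for (nor claimed by) the stated inclusions, which only assert containments.
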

\begin{proof} First, consider $2\leq N\leq d$ and recall the expressions  (\ref{specgen-I}) with $b,c,b^*,c^*\neq 0$. The conditions  $\theta_t-\theta_{t'}= 0$ and $\theta^*_t-\theta^*_{t'}=0$ for some $t\neq t'$ reduce to the equalities $c/b= q^{2m}$ and $c^*/b^*= q^{2m'}$,  for some $m,m'\in \overline{0,N-1}$, which are excluded  by Assumption~\ref{ass} (iii). So, the eigenvalues are  distinct.
Under  Assumption~\ref{ass}, the eigenvectors of $A$ are well-defined 
at $q=e^{\frac{\rmi\pi}{N}}$ for any positive integer $N>1$, and a vector in $V_n(e^{\rmi\pi/N})$ corresponds to the eigenvalue $\theta_n(e^{\rmi\pi/N})$. For $0\leq n \leq \dia$, there exist unique non-negative integers $t,k$ such that $n=t+kN$, with $0\leq t \leq N-1$. Using (\ref{specgen-I}), we immediately find (\ref{spec-root}).
Then for each fixed~$t$, we note that by Assumption~\ref{ass} the eigenspace $V^{(N)}_t$ of the operator $A$ is spanned by the eigenvectors in $V_t(e^{\frac{\rmi\pi}{N}}),V_{t+N}(e^{\frac{\rmi\pi}{N}}), \ldots$ and thus we have the decomposition in~\eqref{esp}. 
 We have obviously the similar statement for  the  $A^*$ eigenspaces  $V^{*(N)}_t$ of the eigenvalue $\theta^*_t$. The action~\eqref{tdroot1}-\eqref{tdroot2} of $A$ and $A^*$ on these spaces follows from
 our Assumption~\ref{ass} where the  specialization of  $A^*$  with finite entries means that the action is block tridiagonal on each $V_n(e^{\rmi\pi/N})$ and thus by construction on  $V^{(N)}_t$.

 For $N>d$, we have obviously $d+1$ distinct eigenvalues $\theta_t$ (resp. $\theta^*_t$) by Assumption (\ref{ass}) (iii) and each $V_t^{(N)}$ is $V_t(e^{\frac{\rmi\pi}{N}})$ (resp. $V_t^{*(N)}$ is $V^*_t(e^{\frac{\rmi\pi}{N}})$). The statement then follows.
\end{proof}
\vspace{1mm}

We emphasize that given a  TD pair of $q-$Racah type $A, A^*$ its specialization to $q$ a root of unity does not necessary give an irreducible action of $A$ and $A^*$.
 Studying the irreducibility is a difficult problem.
In Appendix~\ref{app:q-Racah}, we give an explicit example of a cyclic TD pair obtained from a TD pair of $q-$Racah type with $a=a^*=0$, where
we give a rather technical proof of the irreducibility.
Combining the results in Proposition~\ref{lem1} and Assumption~\ref{ass} (ii)\footnote{The point (ii) about existence of the specialization of the transition matrix between the two eigenbases allows us to show that the two pairs of matrices $(A,A^*)$ specialized in the first eigenbasis and in the second eigenbasis, correspondingly, give the same  pair of linear maps $A:V\to V$ and $A^*:V\to V$.}, we have the following corollary.
\begin{cor}\label{cor}
For $q=e^{\frac{\rmi\pi}{N}}$ and integer  $2\leq N\leq d$, assume the vector space $V$ is irreducible under the action of $A$ and $A^*$ in Assumption~\ref{ass}. Then, the  pair of operators $A$, $A^*$ form a cyclic tridiagonal pair of cyclicity $N$.
\end{cor}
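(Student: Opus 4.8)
The plan is to verify, one by one, the four conditions in Definition~\ref{deficitri} for the pair $(A,A^*)$ with $N^{(N)}=N$, using the data assembled in Proposition~\ref{lem1} together with the irreducibility hypothesis. Condition (i), that each of $A,A^*$ is diagonalizable, is immediate: by Assumption~\ref{ass} each operator has a complete eigenbasis with finite entries at $q=e^{\rmi\pi/N}$, and Proposition~\ref{lem1} identifies the eigenspaces and shows there are exactly $N$ distinct eigenvalues $\theta_t$ (resp. $\theta^*_t$), so $V=\bigoplus_{t\in\mathbb Z_N}V^{(N)}_t=\bigoplus_{t\in\mathbb Z_N}V^{*(N)}_t$.

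Conditions (ii) and (iii) are the heart of the matter, but they too are essentially delivered by Proposition~\ref{lem1}. I would take the ordering $\{V^{(N)}_t\}_{t\in\mathbb Z_N}$ of the $A$-eigenspaces and $\{V^{*(N)}_t\}_{t\in\mathbb Z_N}$ of the $A^*$-eigenspaces produced there, indexing them by $\mathbb Z_N$ precisely because the spectrum at the root of unity is periodic with period $N$. The containments~\eqref{tdroot1} and~\eqref{tdroot2}, namely $A^*V^{(N)}_t\subseteq V^{(N)}_{t-1}+V^{(N)}_t+V^{(N)}_{t+1}$ and $AV^{*(N)}_t\subseteq V^{*(N)}_{t-1}+V^{*(N)}_t+V^{*(N)}_{t+1}$ with indices read cyclically (using $V^{(N)}_{-1}=V^{(N)}_{N-1}$ and $V^{(N)}_N=V^{(N)}_0$), are exactly conditions~\eqref{eq:ct1} and~\eqref{eq:ct2}. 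The only thing worth stressing in the write-up is that the "folding" of the original linear chain of eigenspaces $\{V_n(q)\}_{n=0}^d$ into the cyclic family $\{V^{(N)}_t\}$, via the decomposition $V^{(N)}_t=\bigoplus_k V_{t+kN}(q)|_{q=e^{\rmi\pi/N}}$, is what converts the tridiagonal (open-ended) action into a genuinely cyclic one: the boundary terms $V_{-1}=0$ and $V_{d+1}=0$ of the original TD pair disappear and are replaced by the wrap-around identifications, so the membership $A^*V_n\subseteq V_{n-1}+V_n+V_{n+1}$ for each $n$ descends to the cyclic statement after collecting the blocks congruent mod $N$.

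Condition (iv), irreducibility, is where the hypothesis of the corollary does the work: we are assuming outright that $V$ has no proper nonzero subspace invariant under both $A$ and $A^*$ at $q=e^{\rmi\pi/N}$. So for this step there is nothing to prove beyond citing the assumption. The role of Assumption~\ref{ass}(ii), as the footnote indicates, is to guarantee that the two descriptions of the specialized pair — computed in the $A$-diagonal basis and in the $A^*$-diagonal basis — are related by an honest (finite, invertible) transition matrix at the root of unity, so that they define one and the same pair of linear maps $A,A^*$ on $V$; without this, the two bases could specialize to unrelated operators and conditions (ii), (iii) would refer to different pairs. I would make this remark explicit so that (ii) and (iii) are seen to hold for a single coherent pair.

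The main obstacle I anticipate is not in any single verification — each condition reduces to something already established — but in presenting the identification cleanly, namely making transparent that the cyclicity $N$ is forced by the period of the root-of-unity spectrum and that the index arithmetic genuinely closes up mod $N$ (in particular that $t\pm1$ never escapes the range, which is why the wrap-around conventions $V^{(N)}_{-1}=V^{(N)}_{N-1}$, $V^{(N)}_N=V^{(N)}_0$ are consistent with~\eqref{eq:ct1}). Since all the analytic content (finiteness of entries at the specialization, distinctness of the $N$ eigenvalues, block-tridiagonal action) has been isolated in Assumption~\ref{ass} and Proposition~\ref{lem1}, the proof of the corollary itself should be short: assemble conditions (i)–(iii) from Proposition~\ref{lem1}, invoke the irreducibility hypothesis for (iv), and conclude that $(A,A^*)$ is a cyclic tridiagonal pair of cyclicity $N$.
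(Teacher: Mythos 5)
Your proposal is correct and follows essentially the same route as the paper: the corollary there is obtained precisely by combining Proposition~\ref{lem1} (which supplies diagonalizability and the cyclic containments~\eqref{tdroot1}--\eqref{tdroot2}, i.e.\ conditions (i)--(iii) of Definition~\ref{deficitri}) with Assumption~\ref{ass}(ii) to ensure the two eigenbasis specializations define a single pair of linear maps, while condition (iv) is exactly the irreducibility hypothesis of the statement. Your added remarks on the mod-$N$ folding of the eigenspace chain and on the role of the transition matrix are faithful elaborations of what the paper compresses into its footnote, not deviations from its argument.
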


Note that our first example of cyclic TD pairs given in Appendix~\ref{app:first-ex} does not appear as a root-of-unity specialization  of a TD pair of $q-$Racah type. 

\subsection{Cyclicity and the minimal polynomials}
From now on, we will assume that $2\leq N\leq d$. 
Then according to Proposition~\ref{lem1}, any cyclic TD pair $A$, $A^*$ obtained from a TD pair of $q-$Racah type of diameter $d$ has exactly $N$ distinct eigenvalues denoted by $\theta_t$ and~$\theta^*_t$, respectively, for $t=0,1,...,N-1$. As a consequence, the operator $A$  satisfies an algebraic equation determined by the minimal polynomial
\beqa\label{P-def}
P_N\bigl(x;\{\theta_t\}\bigr)=\prod_{t=0}^{N-1}(x-\theta_t),
\eeqa
and similarly for $A^*$, and $\{\theta_t\}$ stands for the set $\{\theta_t\}_{t=0}^{N-1}$ for brevity (and we follow this convention below).

\begin{subprop}\label{prop:min-poly}
Assume $q=e^{\frac{\rmi\pi}{N}}$. Let $A$, $A^*$ denote a cyclic TD pair with spectra of the form~(\ref{spec-root}). Then, the operators $A$ and $A^*$ satisfy the relations
\beqa
P_N\bigl(A;\{\theta_t\}\bigr)=0\qquad \text{and} \qquad P_N\bigl(A^*;\{\theta^*_t\}\bigr)=0. \label{relPt}
\eeqa
\end{subprop}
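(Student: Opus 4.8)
The plan is to show that each operator annihilates its own minimal polynomial by exploiting the spectral decomposition established in Proposition~\ref{lem1}. Since $A$ is diagonalizable with exactly the $N$ distinct eigenvalues $\theta_0,\dots,\theta_{N-1}$, we have the resolution of the identity $\one=\sum_{t=0}^{N-1}E_t^{(N)}$ together with $A=\sum_{t=0}^{N-1}\theta_t\,E_t^{(N)}$ and the orthogonality relations $E_t^{(N)}E_s^{(N)}=\delta_{t,s}E_t^{(N)}$, recall~\eqref{EN-def}. The same holds for $A^*$ with the starred idempotents and eigenvalues $\theta_t^*$.

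First I would substitute the spectral decomposition of $A$ into $P_N\bigl(A;\{\theta_t\}\bigr)=\prod_{t=0}^{N-1}(A-\theta_t\one)$. Using the orthogonality of the idempotents, any polynomial in $A$ acts on the eigenspace $V_s^{(N)}=E_s^{(N)}V$ as the scalar obtained by evaluating that polynomial at $\theta_s$; concretely,
\beqa
P_N\bigl(A;\{\theta_t\}\bigr)=\sum_{s=0}^{N-1}P_N\bigl(\theta_s;\{\theta_t\}\bigr)\,E_s^{(N)}.\nonumber
\eeqa
But $P_N\bigl(\theta_s;\{\theta_t\}\bigr)=\prod_{t=0}^{N-1}(\theta_s-\theta_t)=0$ for every $s$, since the factor with $t=s$ vanishes. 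Hence every coefficient in the sum is zero and $P_N\bigl(A;\{\theta_t\}\bigr)=0$. The identical argument applied to $A^*$, using that its distinct eigenvalues are exactly $\theta_0^*,\dots,\theta_{N-1}^*$, yields $P_N\bigl(A^*;\{\theta_t^*\}\bigr)=0$.

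This argument is essentially formal once the spectral data is in hand, so I do not expect a genuine obstacle here. The only point requiring care is that the whole computation rests on $A$ (resp. $A^*$) being diagonalizable with \emph{precisely} these $N$ eigenvalues and no others at the specialization $q=e^{\frac{\rmi\pi}{N}}$; this is exactly the content of Proposition~\ref{lem1} under the standing hypothesis $2\leq N\leq d$ and Assumption~\ref{ass}~(iii), which guarantees the $\theta_t$ are pairwise distinct. Thus the proof reduces to invoking Proposition~\ref{lem1} to fix the spectrum and then reading off the vanishing of $P_N$ eigenspace by eigenspace.
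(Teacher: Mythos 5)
Your proof is correct and takes essentially the same approach as the paper: the paper gives no written proof of Proposition~\ref{prop:min-poly}, treating it as an immediate consequence of Proposition~\ref{lem1} (each operator is diagonalizable with exactly the $N$ distinct eigenvalues $\theta_t$, resp.\ $\theta^*_t$, at the specialization), and your idempotent computation $P_N\bigl(A;\{\theta_t\}\bigr)=\sum_{s=0}^{N-1}P_N\bigl(\theta_s;\{\theta_t\}\bigr)E_s^{(N)}=0$ is precisely the standard formalization of that observation. You also correctly identify the one point needing care, namely that Assumption~\ref{ass}~(iii) and $2\leq N\leq d$ guarantee the eigenvalues are pairwise distinct and exhaust the spectrum.
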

\vspace{1mm}

Note that the minimal polynomials can be expanded as follows.
Let $e_k(\theta_0,\cdots,\theta_{N-1})$, with $k=0,1,...,N-1$, denote the elementary symmetric polynomials in the variables $\theta_n$, i.e. given by
\beqa
e_k(\theta_0,\cdots,\theta_{N-1})=\!\!\!\!\!\sum_{0\leq j_1< j_2< \cdots < j_k\leq N-1}\!\!\!\!\!\!\!\!\!\!\!\!\! \theta_{j_1}\theta_{j_2}\cdots  \theta_{j_{k}} \quad \mbox{with} \quad e_0(\theta_0,\cdots,\theta_{N-1})=1.
\eeqa
For $N\geq 2$, we then have
\beqa
P_N\bigl(x;\{\theta_t\}\bigr) = \sum_{k=0}^N e_k(\theta_0,\cdots,\theta_{N-1})x^{N-k}\  \label{polymin}
\eeqa
and similarly for $P_N\bigl(x;\{\theta^*_t\}\bigr)$ with the replacement $(a,b,c)\rightarrow (a^*,b^*,c^*)$.
For instance\footnote{For $N=1$, $q^2=1$ there is no polynomial. In this special case, the operators $\cW_0,\cW_1$ however reduce to elements that generate the undeformed Onsager algebra.}: \vspace{1mm} 
\begin{itemize}
\item For $N = 2,~ q^4 = 1$,
\begin{equation*}
P_2(x;\theta_0,\theta_1) = x^2 - 2ax + (a+b+c)(a-b-c);
\end{equation*}
\item For $N =3,~ q^6 = 1$,
\begin{equation*}
P_3(x;\theta_0,\theta_1,\theta_2) =x^3-3ax^2+3(a^2-bc)x-(a+b+c)(a^2+b^2+c^2-ab-bc-ac); 
\end{equation*}
\item For $N = 4,~ q^8 =1$,
\begin{align*}
P_4(x;\theta_0,\theta_1,\theta_2,\theta_3) &= x^4-4ax^3 +2(3a^2-2bc)x^2-4a(a^2-2bc)x\qquad\\
&\qquad\qquad + (a+b+c)(a-b-c)(a^2+b^2+c^2-2bc).
\end{align*}
\end{itemize}
\vspace{1mm}

\subsection{The divided polynomials}\label{sec:div-poly}
For  two operators $A$ and $A^*$ that form a cyclic tridiagonal pair with spectra~\eqref{spec-root}, for $q$ specialized at a root of unity, we now introduce and study  two additional operators that are called below the divided polynomials. \vspace{1mm}

In view of the fact that the polynomials (\ref{polymin}) for $A$ and $A^*$ vanish for $q^{2N}=1$, see Proposition~\ref{prop:min-poly}, we renormalise them by the $q$-factorial $[N]_q!$ and take then the root of unity limit called the {\it divided polynomials} for the cyclic TD pair $A$, $A^*$: 
\beqa\label{WNnew}
\AN\, = \lim_{q\rightarrow e^{\frac{\rmi\pi}{N}}} \ \frac{P_N(A;\{\theta_t\})}{[N]_q!} \qquad 
\text{and} \qquad
\ANs\, = \lim_{q\rightarrow e^{\frac{\rmi\pi}{N}}} \ \frac{P_N(A^*;\{\theta^*_t\})}{[N]_q!}.  
\eeqa
Here,  the limit is taken in the sense of matrix elements and the entries of $A$ and $A^*$ are (rational) functions in $q$, we also assume that $\theta_t$ in~\eqref{P-def}  is fixed for given $N$ and does not depend on $q$, i.e. taken as the eigenvalue at the root of unity. We now give a general proof that~\eqref{WNnew} is well defined in the eigenbases of $A$ and $A^*$, respectively.\vspace{1mm}

According to the spectral properties~\eqref{spec-root} of the cyclic tridiagonal pair $A$, $A^*$, the eigenvalues of the divided polynomials (\ref{WNnew}) can be derived in a closed form. Let us first consider the special case $N=2$ which is the most degenerate situation. The spectra are particularly simple.
\begin{sublem}\label{lem3} Let $d$ be the diameter of a TD pair of $q-$Racah type.  Assume that the derivatives $\frac{\partial {\theta}_n(q)}{\partial q}$ (resp. $\frac{\partial {\theta}^*_n(q)}{\partial q}$) are finite at $q=\rmi$, for $n=0,1,...,d$.
For $N = 2$, the eigenvalues $\tilde{\theta}_n$, with $n = 0, \dots, d$ (resp. $\tilde{\theta}_n^*$, $n = 0, \dots, d$) of the divided polynomials $\Aex{2}$ (resp. $\Aexs{2}$) take the form
\begin{equation}
\tilde{\theta}_n = \rmi(c^2-b^2)2n \quad  \mbox{and} \quad \tilde{\theta}^*_n = \rmi({c^*}^2-{b^*}^2)2n.\label{eval2}
\end{equation}
\end{sublem}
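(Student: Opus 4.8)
The plan is to compute the eigenvalues of $\Aex{2}$ directly from its definition as a renormalized limit, working in the eigenbasis of $A$. First I would observe that for $N=2$ we have $q = \rmi$, so $q^2 = -1$ and the two distinct eigenvalues of $A$ are $\theta_0 = a+b+c$ and $\theta_1 = a-b-c$ (from \eqref{spec-root} at $t=0,1$). The full eigenvalue list of $A$ at the root of unity, indexed by $n=0,\dots,d$, is $\theta_n = a + b(-1)^n + c(-1)^n = a + (b+c)(-1)^n$, so $\theta_n = \theta_0$ for $n$ even and $\theta_n = \theta_1$ for $n$ odd. On an eigenvector $v \in V_n(q)$ the operator $\Aex{2}$ acts as the scalar $\lim_{q\to\rmi} P_2(\theta_n(q);\{\theta_t\})/[2]_q!$, where $\theta_n(q)$ is the genuinely $q$-dependent eigenvalue of $A$ before specialization and $\{\theta_t\}$ in $P_2$ denotes the fixed root-of-unity values $\theta_0,\theta_1$.

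Next I would recognize that this is a $0/0$ limit requiring l'Hôpital's rule. The denominator $[2]_q! = [2]_q = q + q^{-1}$ vanishes at $q=\rmi$ since $\rmi + \rmi^{-1} = \rmi - \rmi = 0$. The numerator $P_2(\theta_n(q);\theta_0,\theta_1) = (\theta_n(q)-\theta_0)(\theta_n(q)-\theta_1)$ also vanishes at $q=\rmi$ because, by the preceding discussion, $\theta_n(\rmi)$ equals either $\theta_0$ or $\theta_1$. Writing $\tilde\theta_n = \lim_{q\to\rmi}\frac{P_2(\theta_n(q);\theta_0,\theta_1)}{q+q^{-1}}$ and differentiating numerator and denominator in $q$, the denominator derivative is $1 - q^{-2}$, which equals $1-(-1) = 2$ at $q=\rmi$. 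For the numerator derivative I would use that exactly one factor vanishes at $q=\rmi$, so the derivative picks out $\frac{\partial\theta_n(q)}{\partial q}\big|_{\rmi}$ times the surviving factor $(\theta_n(\rmi) - \theta_{1-\varepsilon})$, where $\varepsilon \in\{0,1\}$ records which eigenvalue $\theta_n(\rmi)$ hits.

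Here is where the hypothesis that $\frac{\partial\theta_n(q)}{\partial q}$ is finite at $q=\rmi$ enters: it guarantees the l'Hôpital computation is legitimate and the limit finite. From \eqref{specgen-I}, $\frac{\partial\theta_n(q)}{\partial q} = 2nbq^{2n-1} - 2ncq^{-2n-1}$, which at $q=\rmi$ evaluates (using $q^{2n}=(-1)^n$, $q^{-1}=-\rmi$) to $-2n\rmi(b(-1)^n + c(-1)^n)\cdot(\text{sign bookkeeping})$; carrying out the arithmetic for $n$ even versus $n$ odd, together with the surviving factor $\theta_0-\theta_1 = 2(b+c)$ or $\theta_1-\theta_0 = -2(b+c)$, I expect the $(b+c)$ factors to combine into $c^2-b^2$ and produce $\tilde\theta_n = \rmi(c^2-b^2)2n$. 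The identical computation with starred parameters gives $\tilde\theta^*_n = \rmi({c^*}^2-{b^*}^2)2n$.

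The main obstacle I anticipate is the careful sign and branch bookkeeping in the l'Hôpital step: one must correctly track which of $\theta_0,\theta_1$ the factor $\theta_n(\rmi)$ coincides with as $n$ alternates parity, pair it with the correct surviving factor, and evaluate $q^{2n-1},q^{-2n-1}$ at $q=\rmi$ consistently so that the products telescope to the clean form $\rmi(c^2-b^2)2n$. A subtlety worth flagging is that $\tilde\theta_n$ genuinely depends on the fine index $n$ (not merely on the coarse class $t$), so the divided polynomial $\Aex{2}$ has $d+1$ distinct eigenvalues in general and is \emph{not} a scalar on the coarse eigenspaces $V_t^{(2)}$ — this is precisely what makes it a new operator rather than a function of $A$.
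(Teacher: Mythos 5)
Your proposal is correct and follows essentially the same route as the paper: evaluate $\Aex{2}$ on the eigenbasis of $A$, replace $A$ by the $q$-dependent eigenvalue $\theta_n(q)=a+bq^{2n}+cq^{-2n}$, and resolve the $0/0$ limit $\lim_{q\to\rmi}P_2(\theta_n(q);\theta_0,\theta_1)/(q+q^{-1})$ by l'H\^opital, with denominator derivative $1-q^{-2}=2$ at $q=\rmi$. The only cosmetic difference is that you differentiate the factored form $(\theta_n(q)-\theta_0)(\theta_n(q)-\theta_1)$ by the product rule (one factor vanishing by parity of $n$, which is legitimate since $\theta_0\neq\theta_1$ by Assumption~\ref{ass}(iii)), whereas the paper applies the chain rule to the expanded quadratic, obtaining $4nq^{-1}(bq^{2n}+cq^{-2n})(bq^{2n}-cq^{-2n})$ directly --- the same computation, and your parity bookkeeping (note the surviving factor pairs with $\theta_n'(\rmi)=-2n\rmi(b(-1)^n-c(-1)^n)$, with a minus sign between the terms) indeed telescopes to $\tilde\theta_n=\rmi(c^2-b^2)2n$ in both parities.
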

\begin{proof}
For $N =2$, the minimal polynomial  $P_2(x;\theta_0,\theta_1)$ is given below (\ref{polymin}).  Then, evaluating the right-hand side of the first expression~\eqref{WNnew} in the $A$-eigenbasis in $V_n(\rmi)$ we can replace $A$ simply by its eigenvalue and define the numbers
\begin{eqnarray*}
\tilde{\theta}_n = \lim\limits_{q \to e^{\rmi\pi/2}}{\frac{{\theta_n}^2-2a\theta_n+(a+b+c)(a-b-c)}{q+q^{-1}}}.
\end{eqnarray*}
Using the l'H\^opital's rule, we obtain
\begin{eqnarray*}
\tilde{\theta}_n& =& \lim\limits_{q \to e^{\rmi\pi/2}}{\frac{4nq^{-1}(bq^{2n}+cq^{-2n})(bq^{2n}-cq^{-2n})}{1-q^{-2}}}
\end{eqnarray*}
which reduces to (\ref{eval2}). A similar expression is obtained for $\tilde{\theta}_n^*$, replacing $(b,c)\rightarrow (b^*,c^*)$.
\end{proof}
\vspace{1mm}

The analysis above can be generalized in a straightforward manner. 
\begin{sublem}\label{lem2} 
Let $d$ be the diameter of a TD pair of $q-$Racah type. Assume that the derivatives $\frac{\partial {\theta}_n(q)}{\partial q}$ (resp. $\frac{\partial {\theta}^*_n(q)}{\partial q}$) are finite at $q=e^{\frac{\rmi\pi}{N}}$, for $n=0,1,...,d$ and integer $N >2$.
Then,  the  divided polynomials  $\AN$ (resp. $\ANs$) have $d+1$ eigenvalues denoted by $\tilde{\theta}_n$ (resp. $\tilde{\theta}^*_n$), $n=0,1,...,d$. If $k$, $t$ are non-negative integers   such that $n=t+kN$, $0\leq t \leq N-1$, the eigenvalues read as
\begin{equation}
\tilde{\theta}_n = C_0^{(n)}\left(\frac{\partial a}{\partial q}q+\frac{\partial b}{\partial q}q^{2n+1}+\frac{\partial c}{\partial q}q^{-2n+1}+ 2n( b q^{2n} - c q^{-2n})\right)\Bigl|_{q=e^{\frac{\rmi\pi}{N}}}    \label{spectnew}
\end{equation}
and
\begin{equation}
C_0^{(n)}=-\frac{(q-q^{-1})}{2N[N-1]_q!}  \prod_{j=0,j\neq t}^{N-1} (\theta_t-\theta_j),\nonumber
\end{equation}
where $q=e^{\frac{\rmi\pi}{N}}$.
A similar expression is obtained for $\tilde{\theta}_n^*$, replacing $(a,b,c)\rightarrow (a^*,b^*,c^*)$.
The eigenspace of $\AN$ corresponding to $\tilde{\theta}_n$ is $V_n\equiv V_n(q = e^{\frac{\rmi\pi}{N}})$, and similarly for $\ANs$.
\end{sublem}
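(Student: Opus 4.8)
The plan is to mimic the $N=2$ computation of Lemma~\ref{lem3} but for general $N>2$, using l'H\^opital's rule to resolve the $0/0$ indeterminacy in the limit~\eqref{WNnew}. First I would work in the $A$-eigenbasis restricted to the subspace $V_n(q)$, where $A$ acts by the scalar $\theta_n(q)$, so that the operator $\AN$ acts diagonally with the eigenvalue
\beq\label{plan:limit}
\tilde\theta_n=\lim_{q\to e^{\rmi\pi/N}}\frac{P_N\bigl(\theta_n(q);\{\theta_t\}\bigr)}{[N]_q!}.
\eeq
This already justifies the last sentence of the statement, that the eigenspace of $\AN$ for $\tilde\theta_n$ is $V_n(e^{\rmi\pi/N})$, since each $V_n(q)$ is preserved and carries a single scalar. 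Here one must be careful that in~\eqref{P-def} the roots $\theta_t$ are held \emph{fixed} at their root-of-unity values while the argument $\theta_n(q)$ varies with $q$; this is exactly the convention fixed just before Lemma~\ref{lem3}.

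Next I would examine why~\eqref{plan:limit} is a $0/0$ form and extract the leading order. The denominator $[N]_q!$ vanishes at $q=e^{\rmi\pi/N}$ because $[N]_q=(q^N-q^{-N})/(q-q^{-1})$ has $q^N-q^{-N}\to 0$; more precisely $[N]_q!=[N]_q\,[N-1]_q!$ and only the factor $[N]_q$ is responsible for the zero, with $[N-1]_q!$ finite and nonzero. For the numerator, writing $n=t+kN$ with $0\le t\le N-1$, one has $\theta_n(e^{\rmi\pi/N})=\theta_t$ because $q^{2n}=q^{2t}q^{2kN}=q^{2t}$ at the root of unity; hence $\theta_n(q)$ tends to one of the frozen roots $\theta_t$ of $P_N$, so $P_N(\theta_n(q);\{\theta_t\})\to 0$. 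Applying l'H\^opital in $q$ (legitimate under the stated hypothesis that $\partial\theta_n(q)/\partial q$ is finite at the root of unity) gives
\beq\label{plan:hopital}
\tilde\theta_n=\frac{1}{[N-1]_q!}\,\frac{\frac{\partial}{\partial q}P_N\bigl(\theta_n(q);\{\theta_t\}\bigr)}{\frac{\partial}{\partial q}[N]_q}\Bigg|_{q=e^{\rmi\pi/N}}.
\eeq
By the chain rule $\frac{\partial}{\partial q}P_N(\theta_n(q);\{\theta_t\})=P_N'(\theta_n;\{\theta_t\})\,\frac{\partial\theta_n}{\partial q}$, where $P_N'$ is the derivative in the first argument; since $\theta_n\to\theta_t$ is a simple root, $P_N'(\theta_t;\{\theta_t\})=\prod_{j\neq t}(\theta_t-\theta_j)$, which is the product appearing in $C_0^{(n)}$.

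Then I would assemble the pieces. The factor $\frac{\partial}{\partial q}[N]_q$ at $q=e^{\rmi\pi/N}$ produces, together with the $1/[N-1]_q!$, exactly the prefactor $C_0^{(n)}=-\frac{(q-q^{-1})}{2N[N-1]_q!}\prod_{j\neq t}(\theta_t-\theta_j)$ after a short computation of $\partial_q[N]_q$ at the root of unity; I would carry out this derivative using $[N]_q=(q^N-q^{-N})/(q-q^{-1})$ and evaluate at $q^N=-1$, where $q^N-q^{-N}=0$ simplifies the quotient-rule expression. The remaining factor $\frac{\partial\theta_n}{\partial q}$ is computed directly from~\eqref{specgen-I}, treating $a,b,c$ as (possibly $q$-dependent) parameters:
\beq\label{plan:dtheta}
\frac{\partial\theta_n}{\partial q}=\frac{\partial a}{\partial q}+\frac{\partial b}{\partial q}q^{2n}+\frac{\partial c}{\partial q}q^{-2n}+2nbq^{2n-1}-2ncq^{-2n-1},
\eeq
and multiplying by $q$ reproduces the parenthesized expression in~\eqref{spectnew}. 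Combining~\eqref{plan:hopital}, \eqref{plan:dtheta}, and the evaluation of $C_0^{(n)}$ yields the claimed formula; the starred version follows verbatim with $(a,b,c)\to(a^*,b^*,c^*)$, and the $N=2$ case of~\eqref{eval2} is recovered as a consistency check by specializing.

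The main obstacle I anticipate is the careful bookkeeping at the root of unity in the l'H\^opital step, specifically matching the constant $C_0^{(n)}$. One has to keep $[N-1]_q!$ and $\prod_{j\neq t}(\theta_t-\theta_j)$ separated from the genuinely vanishing factor, correctly differentiate $[N]_q$ (not $[N]_q!$, which would spuriously involve $\partial_q[N-1]_q!$) and confirm the sign and the factor $2N$ coming from $\partial_q(q^N-q^{-N})=N(q^{N-1}+q^{-N-1})$ evaluated where $q^N=-1$. A secondary point requiring care is the justification that the limit commutes with taking matrix elements on all of $V_n(q)$, not merely on eigenvalues: the hypothesis that the entries of $A,A^*$ are rational in $q$ and finite at $q=e^{\rmi\pi/N}$ (Assumption~\ref{ass}(i)) together with the finiteness of $\partial_q\theta_n$ ensures the operator limit in~\eqref{WNnew} exists and is diagonal with the computed eigenvalues, so no off-diagonal contributions survive.
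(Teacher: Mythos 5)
Your proposal is correct and follows essentially the same route as the paper's proof: restrict to the eigenspace $V_n(q)$ so the limit~\eqref{WNnew} becomes a scalar $0/0$ limit, factor out the finite $[N-1]_q!$ and the nonvanishing product $\prod_{j\neq t}(\theta_t-\theta_j)$, and resolve the remaining indeterminacy by a single application of l'H\^opital's rule, yielding $C_0^{(n)}$ and the factor $q\,\partial_q\theta_n$. The only (cosmetic) difference is that you differentiate the full $P_N(\theta_n(q);\{\theta_t\})$ via the chain rule and use simplicity of the root $\theta_t$, whereas the paper first strips off the nonvanishing linear factors and applies l'H\^opital only to $(\theta_n(q)-\theta_t)/(q^N-q^{-N})$ — the two computations are identical in substance.
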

\begin{proof}
 In terms of the  indeterminate $q$,   recall that $A$  has $d+1$ distinct eigenvalues 
%
\begin{equation*}
\theta_n(q)=a+bq^{2n}+cq^{-2n}, \qquad n=0,\dots,d,
\end{equation*}
 and corresponding eigenspaces $V_0(q)$, $V_1(q)$, $\dots$, $V_d(q)$.
 By Proposition~\ref{lem1} we have that $A$ has $N$ distinct eigenvalues $\theta_0, \theta_1, \dots, \theta_{N-1}$ at the specialization $q=e^{\frac{\rmi\pi}{N}}$.
Let $\tilde{{\theta }}_n$ (resp. $\tilde{{\theta }}_n^*$) denote the eigenvalues of $\AN$ (resp. $\ANs$).
We calculate the spectrum of the right-hand side of the first expression~\eqref{WNnew} in the $A$-eigenbasis. Restricting to the subspace $V_n( e^{\frac{\rmi\pi}{N}})$, we can replace $A$  by its eigenvalue $\theta_n$. Therefore, 
$\tilde{\theta}_n$ can be then written as
\beqa
\tilde{{\theta}}_{n}=\mathop{\lim}\limits_{q\to e^{\frac{\rmi\pi}{N}}}\frac{(\theta_n(q)-\theta_0)(\theta_n(q)-\theta_1)\dots(\theta_n(q)-\theta_{N-1})}{[N]_q!},\qquad n =0,\dots, d. \nonumber
\eeqa
For all $n = 0,\dots, d$, there exist unique $k, t$ non-negative integers, such that $n = kN +t,~~ 0\le t\le N-1$. It yields to
\begin{equation}
\label{eq1}
\tilde{\theta}_n=\frac{(q-q^{-1})\prod\limits_{j=0,j\ne t}^{N-1}{(\theta_t-\theta_j)}}{[N-1]_q!}\mathop{\lim}\limits_{q\to e^{\frac{\rmi\pi}{N}}}\frac{\theta_n(q)-\theta_t}{q^N-q^{-N}}.
\end{equation}
Using l'H\^opital's rule, one has
\begin{eqnarray*}
&&\mathop{\lim}\limits_{q\to e^{\frac{\rmi\pi}{N}}}{\frac{\theta_n(q)-\theta_t}{q^N-q^{-N}}}=\mathop{\lim}\limits_{q\to e^{\frac{\rmi\pi}{N}}}\frac{\frac{\partial \theta_n(q)}{\partial q}     }{-2Nq^{-1}}.
\end{eqnarray*}
Plugging this expression into (\ref{eq1}), the equation (\ref{spectnew}) follows.  A similar expression is obtained for $\tilde{\theta}_n^*$, replacing $(a,b,c)\rightarrow (a^*,b^*,c^*)$.
Finally  by the construction, the eigenspaces of $\AN$ and $\ANs$ corresponding to $\tilde{\theta}_n$ and  $\tilde{\theta}^*_n$ are  $V_n( e^{\frac{\rmi\pi}{N}})$ and $V^*_n( e^{\frac{\rmi\pi}{N}})$, respectively.
\end{proof}

\begin{rem}\label{rem:Hopital}
 We note that the assumption on the first derivatives in the Lemmas~\ref{lem3} and~\ref{lem2} is only a technical assumption, to make the formulas less complicated. One can easily generalize the statement by assuming that the  higher derivatives $\frac{\partial^k {\theta}_n(q)}{\partial q^k}$ (resp. $\frac{\partial^k {\theta}^*_n(q)}{\partial q^k}$) for some finite~$k$ are finite at $q=e^{\frac{\rmi\pi}{N}}$, for $n=0,1,...,d$. The expression for $\tilde{\theta}_n$ and $\tilde{\theta}^*_n$ is then obtained by applying l'H\^opital's rule $k$ times. In Section~\ref{sec5}, we will have a situation where the first derivatives are diverging but the second ones are finite.
\end{rem}

Using the expressions (\ref{WNnew}) of the divided polynomials  in terms of $A$ and $A^*$, we now consider the action of the two operators on the eigenspaces $V_n\equiv V_n(e^{\frac{\rmi\pi}{N}})$ and $V^*_n\equiv V^*_n(e^{\frac{\rmi\pi}{N}})$ of $\AN$ and $\ANs$, respectively.
\begin{Prop}\label{actWN} 
For any eigenbasis of $A$ (resp. $A^*$) where $\ANs$ (resp. $\AN$) is finite,
 the divided polynomials $\ANs$ and $\AN$  act as
\beqa
&&\AN V_n \subseteq V_n\ ,\label{Nstruct}\\ 
&& \ANs V_n \subseteq V_{n-N} +  \cdots + V_n+  \cdots + V_{n+N} \qquad \qquad (0 \leq n \leq d),\nonumber\\
&& \ANs V^*_s \subseteq V^*_{s}\ ,\nonumber\\ 
&&\AN V^*_s \subseteq V^*_{s-N} + \cdots + V^*_s+  \cdots + V^*_{s+N} \qquad \qquad (0 \leq s \leq d).\nonumber
\eeqa
\end{Prop}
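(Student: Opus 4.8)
The plan is to establish the four inclusions by working in the $A$-eigenbasis and the $A^*$-eigenbasis separately, exploiting that the divided polynomials $\AN$ and $\ANs$ are, by their very construction in~\eqref{WNnew}, limits of polynomials in $A$ and $A^*$ respectively. The first inclusion $\AN V_n \subseteq V_n$ is immediate: since $\AN$ is a limit of $P_N(A;\{\theta_t\})/[N]_q!$, it is a polynomial in $A$, hence commutes with $A$ and preserves each $A$-eigenspace. By Lemma~\ref{lem2} (or Lemma~\ref{lem3} for $N=2$), $V_n = V_n(e^{\rmi\pi/N})$ is precisely the $\AN$-eigenspace for $\tilde\theta_n$, so $\AN V_n \subseteq V_n$. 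Dually, $\ANs V^*_s \subseteq V^*_s$ for the same reason with $A^*$ in place of $A$.

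\textbf{The main content} is the cross-inclusion $\ANs V_n \subseteq V_{n-N} + \cdots + V_{n+N}$, and by symmetry $\AN V^*_s \subseteq V^*_{s-N} + \cdots + V^*_{s+N}$. Here I would argue at the level of the indeterminate $q$ before specializing. In the $A$-eigenbasis, decompose $V = \bigoplus_{n=0}^d V_n(q)$; the standard tridiagonal action~\eqref{eq:t1-q} gives $A^* V_n(q) \subseteq V_{n-1}(q) + V_n(q) + V_{n+1}(q)$, so $A^*$ raises or lowers the $A$-eigenspace index by at most one. Consequently $(A^*)^N$, and hence any degree-$N$ polynomial $P_N(A^*;\{\theta^*_t\})$ in $A^*$, shifts the index by at most $N$, i.e.\ maps $V_n(q)$ into $V_{n-N}(q) + \cdots + V_{n+N}(q)$. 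Dividing by $[N]_q!$ and passing to the limit $q \to e^{\rmi\pi/N}$ (which exists as a linear map on the chosen eigenbasis precisely by the finiteness hypothesis of the proposition) preserves this band structure, since each graded component of the limit is the limit of the corresponding component. Therefore $\ANs V_n \subseteq V_{n-N} + \cdots + V_n + \cdots + V_{n+N}$, with the understanding that $V_m = 0$ for $m < 0$ or $m > d$.

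\textbf{The point requiring care} is the interplay between the two distinct eigenbases and the specialization. The proposition is stated ``for any eigenbasis of $A$ where $\ANs$ is finite'': one must be sure that the limit defining $\ANs$, which \emph{a priori} is taken entrywise in whatever basis renders it finite, genuinely produces a well-defined linear map respecting the $A$-eigenspace grading at the root of unity. I would invoke Assumption~\ref{ass}(ii): the finiteness of the transition matrix between the two eigenbases guarantees that the specialized operator obtained in the $A$-eigenbasis and the one obtained in the $A^*$-eigenbasis agree as a single linear map $V \to V$, so the band inclusion derived in the $A$-eigenbasis is intrinsic. The dual statements for $\AN$ acting on the $V^*_s$ follow by exchanging the roles of $A$ and $A^*$ (equivalently $C$ and $C^*$) throughout, using~\eqref{eq:t2} in place of~\eqref{eq:t1-q}.

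\textbf{The expected obstacle} is purely bookkeeping in the limit: ensuring that renormalizing by $[N]_q!$ (which vanishes at $q=e^{\rmi\pi/N}$) does not disturb the grading. Since the band structure of $P_N(A^*;\{\theta^*_t\})$ holds identically in $q$ before any specialization, and division by the scalar $[N]_q!$ is grading-blind, the grading survives the limit termwise; the only genuine analytic input is that each matrix entry has a finite limit, which is exactly the standing hypothesis. Thus no delicate estimate is needed beyond the finiteness already assumed.
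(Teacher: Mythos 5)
Your proof is correct and takes essentially the same route as the paper's: the diagonal inclusions follow from Lemmas~\ref{lem3} and~\ref{lem2} identifying $V_n$ (resp.\ $V^*_s$) as the eigenspace of $\AN$ (resp.\ $\ANs$), and the cross inclusions follow from the block $(2N+1)$-diagonal action of the degree-$N$ polynomial $P_N(A^*;\{\theta^*_t\})/[N]_q!$ at indeterminate $q$, which survives the entrywise limit under the finiteness hypothesis. Your explicit appeal to Assumption~\ref{ass}(ii) for basis-independence only spells out what the paper relegates to a footnote, so nothing essential differs.
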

\begin{proof} The first and third equations follow from Lemmas~\ref{lem3} and~\ref{lem2}. 
In any eigenbasis of $A$ for indeterminate $q$ the polynomial $P_N(A^*;\{\theta^*_t\})/[N]_q!$ in $A^*$ is  of order $N$ (recall~\eqref{polymin})  and acts as a block $(2N+1)$-diagonal matrix. We consider then any eigenbasis of $A$ from Assumption~\ref{ass} and where $\ANs$  is finite and therefore it acts as a block $(2N+1)$-diagonal matrix as well. Therefore, it acts as in the second line of~\eqref{Nstruct}.
Similar argument gives the proof for the action of the divided polynomial for $A$.
\end{proof}
Note that in the basis which diagonalizes $A$  (resp. $A^*$), the matrix representing  $\ANs$ (resp. $\AN$) is  block $(2N+1)$-diagonal\footnote{An example of such operators is given in Appendix~\ref{app:ex-2-div}.}.
From this point of view, the two operators (\ref{WNnew}) can be understood as objects generalizing the properties of tridiagonal pairs.
According to the relationship between TD pairs and the representation theory of TD algebras (see Section 2), an important problem is to identify the relations satisfied by the four operators. In the next Section, we investigate such relations for a rather large class of cyclic TD pairs and corresponding divided polynomials.\vspace{1mm}

\section{The algebra generated for a class of cyclic pairs}

The purpose of this section is to identify the set of relations satisfied by a class of cyclic tridiagonal pairs and the corresponding divided polynomials
\beqa
{\NtcWi} = \mbox{lim}_{q\rightarrow e^{\rmi \pi/N}} \ \frac{P_N\bigl(\cW_i;\{\theta_t^{(i)}\}\bigr)}{[N]_q!}\ , \qquad \mbox{with} \qquad  i\in\mathbb{Z}_2\ ,\label{WNnew2}
\eeqa
where the operators $\cW_0$ and $\cW_1$ form a TD pair of $q-$Racah type, as discussed in Section 2. The class of cyclic TD pair  considered  here is associated with operators having the spectra of the form (\ref{spec-root}), with $a=a^*=0$. First, we identify the relations for $N=2$ and then for $N>2$. All together, the four operators generate a new algebra. We  conjecture that the relations we present here  (Theorems~\ref{prop:mixed-rel-k-zero} and~\ref{propNOns})   are the defining relations for a specialization of the $q-$Onsager algebra at the root of unity. We also show that the subalgebra generated by $\NtcWzero$, $\NtcWun$ is a higher-order generalization of the (classical) Onsager algebra with defining relations  given by~(\ref{genOAN}). 

\subsection{The relations for $N=2$}
We study here the algebra generated by the cyclic TD pair $\W_0,\W_1$
and the divided polynomials (\ref{WNnew2}) at $N=2$ for $a=a^*=0$. The generators  $\W_i$ satisfy  the $q-$Dolan-Grady relations  (\ref{Talg}) evaluated  at $q=e^{\rmi\pi/2}$ or, because $\rho=\rho^*=0$ (see (\ref{rhored})), the $q$-Serre relations at $q^2=-1$:
\beq\label{q-Serre-N2}
\W_i^3\W_{i+1} + \W_i^2\W_{i+1} \W_i -\W_i\W_{i+1}\W_i^2 -\W_{i+1} \W_i^3 =0
\eeq
together with
the polynomial relations~\eqref{relPt} that take the form
\beqa\label{nilp-rel}
\W_0^2 = (b+c)^2 \one \quad \mbox{and}\quad \W_1^2 = (b^*+c^*)^2 \one
\eeqa
Note that the relations~\eqref{q-Serre-N2} are actually consequences of~\eqref{nilp-rel}, i.e., they are identities if we take into account~\eqref{nilp-rel} only. It simply means that, for the special case $N=2$ the basis elements in the subalgebra generated by $\W_i$ are monomials of the form
\beqa
\W_i\W_{i+1}\W_i\W_{i+1}\cdots\nonumber
\eeqa

For the mixed relations (between $\W_i$ and $\NWex{2}_{i+1}$) we obtain:
\beqa
\W_i^3\NWex{2}_{i+1} + \W_i^2\NWex{2}_{i+1} \W_i -\W_i\NWex{2}_{i+1}\W_i^2 -\NWex{2}_{i+1} \W_i^3 &=&0,
\label{eq:WN2-mix-rel-1}\\
\qquad \Bigl(\NWex{2}_{i}\Bigr)^3\W_{i+1} -3\Bigl(\NWex{2}_{i}\Bigr)^2\W_{i+1} \NWex{2}_{i} 
+ 3 \NWex{2}_{i}\W_{i+1} \Bigl(\NWex{2}_{i}\Bigr)^2 - \W_{i+1}\Bigl(\NWex{2}_{i}\Bigr)^3 &=& \rho_{i}^{[2]}\Bigl[\NWex{2}_{i},\W_{i+1}\Bigr],\label{eq:WN2-mix-rel-2}
\eeqa
where
\begin{equation}\label{rho-i}
\rho_0^{[2]} = - 4(b^2-c^2)^2 , \qquad \rho_1^{[2]} = - 4({b^*}^2-{c^*}^2)^2.
\end{equation}
Finally, we have also relations between the divided polynomials
 $\NWex{2}_0$ and $\NWex{2}_1$: 
\begin{multline}\label{eq:WN2-rel}
\Biggl[\,\NWex{2}_i\,,
\Bigl(\NWex{2}_{i}\Bigr)^4 \NWex{2}_{i+1} +\NWex{2}_{i+1}\Bigl(\NWex{2}_{i}\Bigr)^4  \\
-4\Bigl(\NWex{2}_{i}\Bigr)^3 \NWex{2}_{i+1}\NWex{2}_{i}
+6\Bigl(\NWex{2}_{i}\Bigr)^2 \NWex{2}_{i+1}\Bigl(\NWex{2}_{i}\Bigr)^2
-4 \NWex{2}_{i}\NWex{2}_{i+1}\Bigl(\NWex{2}_{i}\Bigr)^3 \\
-5\rho_i^{[2]}\Bigl\{\Bigl(\NWex{2}_{i}\Bigr)^2 \NWex{2}_{i+1} -2\NWex{2}_{i} \NWex{2}_{i+1}\NWex{2}_{i} + \NWex{2}_{i+1} \Bigl(\NWex{2}_{i}\Bigr)^2 \Bigr\}
+4\bigl(\rho_i^{[2]}\bigr)^2  \NWex{2}_{i+1}
\,\Biggr]=0.
\end{multline}

The proof of the relations (\ref{eq:WN2-mix-rel-1}), (\ref{eq:WN2-mix-rel-2}) and (\ref{eq:WN2-rel}) will be given below for $N>2$.\vspace{1mm}

\subsection{The relations for $N>2$}
For the indeterminate $q$,  a TD pair of $q-$Racah type
with spectra (\ref{specgen-I}) for $a=a^*=0$  satisfies the $q-$Dolan-Grady relations~(\ref{Talg}) \cite{Ter03}, as well the higher-order $q-$Dolan-Grady relations (\ref{qDGfinr}) derived in \cite{BV} (see Section 2). In particular, for  the specialization $q=e^{\frac{\rmi\pi}{N}}$ it implies that any cyclic TD pair $C,C^*$ that is obtained from such a TD pair  satisfies (\ref{Talg}), (\ref{qDGfinr}) with $\W_0\rightarrow C$, $\W_1 \rightarrow C^*$. Below, we present an independent proof of these relations. 
\begin{prop}
Assume $q=e^{\frac{\rmi\pi}{N}}$. Let $\cW_0,\cW_1$ denote a cyclic TD pair with spectra given by (\ref{spec-root}) with $a=a^*=0$. Then, the operators $\cW_0,\cW_1$ satisfy the relations (\ref{Talg}).
\end{prop}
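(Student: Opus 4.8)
The plan is to establish the $q$-Dolan-Grady relations \eqref{Talg} for the cyclic TD pair $\cW_0,\cW_1$ by a direct computation that combines the polynomial identities satisfied by the eigenvalues with the triple-product vanishing of Lemma~\ref{lem:ctriplep}. The key observation is that \eqref{Talg} is, up to rearrangement, the $r=1$ case of the higher-order relations of Theorem~\ref{hqdg}: it is a relation of the form $\sum_{i+j\le 3} a_{ij}\,\cW_0^{i}\cW_1\cW_0^{j}=0$ together with its dual. So the strategy mirrors the derivation of Theorem~\ref{hqdg} sketched before that theorem, but carried out in the cyclic setting where the diameter is replaced by cyclicity $N$.

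\textbf{Step 1: the polynomial part.} First I would verify the cyclic analogue of Lemma~\ref{lem:polypart} for $s=1$. With $a=a^*=0$ and spectrum $\theta_t=bq^{2t}+cq^{-2t}$, a short computation gives $\theta_t^2-\beta\,\theta_t\theta_{t'}+\theta_{t'}^2-\delta=0$ whenever $|t-t'|=1$, with $\beta=q^2+q^{-2}$ and $\delta=-bc(q^2-q^{-2})^2=\rho$ (using $a=0$ forces $\gamma=0$, consistent with the reduced sequence). Crucially this holds at $q=e^{\rmi\pi/N}$ for all \emph{adjacent} pairs in $\Z_N$, including the wrap-around pair $(N-1,0)$, because the identity depends only on $\theta_t/\theta_{t'}=q^{\pm2}$ locally and the spectrum \eqref{spec-root} is manifestly $\Z_N$-periodic under $t\mapsto t+N$. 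This is the point where the root-of-unity specialization is essential and where I would be most careful: I must confirm that $\theta_{t}$ and $\theta_{t+1}$ satisfy the quadratic relation for \emph{every} $t\in\Z_N$ with cyclic indexing, not merely for $t=0,\dots,N-2$.

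\textbf{Step 2: assembling the operator identity.} Define $p(x,y)=(x-y)(x^2-\beta xy+y^2-\delta)$, expand it as $\sum a_{ij}x^iy^j$, and form $P:=\sum a_{ij}\cW_0^{\,i}\cW_1\cW_0^{\,j}$. I would show $P=0$ by sandwiching with the primitive idempotents $E_t^{(N)}$ of $\cW_0$: writing $\one=\sum_t E_t^{(N)}$ on both sides of the middle $\cW_1$, one reduces $E_t^{(N)}P E_{t'}^{(N)}$ to $p(\theta_t,\theta_{t'})\,E_t^{(N)}\cW_1 E_{t'}^{(N)}$. By the cyclic tridiagonal action \eqref{tdroot1} (equivalently Lemma~\ref{lem:ctriplep} with $r=1<N/2$, valid since $N>2$), the factor $E_t^{(N)}\cW_1 E_{t'}^{(N)}$ vanishes unless $t'\in\{t-1,t,t+1\}$ in $\Z_N$; and for each of those three cases Step~1 gives $p(\theta_t,\theta_{t'})=0$ (the diagonal case $t=t'$ vanishes because of the overall factor $x-y$). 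Hence every idempotent block of $P$ vanishes, so $P=0$; the dual relation follows by exchanging the roles of $\cW_0$ and $\cW_1$. Finally I would check that the expanded identity $\sum a_{ij}\cW_0^i\cW_1\cW_0^j=0$ is precisely the first line of \eqref{Talg} once one substitutes $\beta=q^2+q^{-2}$, $\delta=\rho$, and rewrites the nested $q$-commutators $\big[\cW_0,[\cW_0,[\cW_0,\cW_1]_q]_{q^{-1}}\big]-\rho[\cW_0,\cW_1]$ in the monomial basis.

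\textbf{Main obstacle.} The genuine subtlety, in contrast to the ordinary TD-pair case, is the wrap-around: in the honest TD pair one uses $V_{-1}=V_{d+1}=0$ to kill boundary terms, whereas here $V_{-1}^{(N)}=V_{N-1}^{(N)}\ne0$, so the ``boundary'' blocks $E_{N-1}^{(N)}\cW_1 E_0^{(N)}$ are genuinely present. The relation survives only because the quadratic eigenvalue identity of Step~1 also holds for the wrapped adjacent pair $(N-1,0)$ at the root of unity, which is exactly where the hypothesis $q=e^{\rmi\pi/N}$ enters. I would therefore make the verification of Step~1 at the wrap-around pair the technical heart of the argument, and I expect the rest to be the routine bookkeeping of matching the monomial expansion of $p$ against the $q$-commutator form of \eqref{Talg}.
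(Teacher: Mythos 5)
Your proposal is correct and takes essentially the same route as the paper's proof: form $\Delta_1=$ LHS$-$RHS of the first relation in \eqref{Talg}, sandwich between the primitive idempotents $E^{(N)}_t$ so that $E^{(N)}_t\Delta_1 E^{(N)}_{t'}=p_1(\theta_t,\theta_{t'})\,E^{(N)}_t\cW_1 E^{(N)}_{t'}$ with $p_1(x,y)=(x-y)\bigl(x^2-(q^2+q^{-2})xy+y^2-\rho\bigr)$, check $p_1(\theta_t,\theta_{t'})=0$ for $|t-t'|\le 1$ \emph{and} for the wrap-around pair $|t-t'|=N-1$, and kill the remaining blocks by Lemma~\ref{lem:ctriplep} with $r=1$. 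One small inaccuracy that does not affect the argument: $\theta_t/\theta_{t+1}\neq q^{\pm2}$ in general (that holds only if $b=0$ or $c=0$); what actually saves the wrap-around case is that the quadratic identity $\theta_t^2-(q^2+q^{-2})\theta_t\theta_{t+1}+\theta_{t+1}^2=\rho$ is $t$-independent together with the periodicity $\theta_{t+N}=\theta_t$ at $q^{2N}=1$, which your planned direct verification would confirm.
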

\begin{proof}
Let $\Delta_1$ denote the expression of the left-hand side minus the  right-hand side  of the first equation in~(\ref{Talg}).  We show now that $\Delta_1=0$. Recall that $q=e^{\frac{\rmi\pi}{N}}$ and  $\cW_0,\cW_1$ is a cyclic TD pair by the assumption of the proposition, we thus use the primitive idempotents defined in~\eqref{EN-def}. For $i,j\in {\mathbb Z}_N$, one has $E^{(N)}_i \Delta_1 E^{(N)}_j = p_1(\theta_i,\theta_j) \ E^{(N)}_i \cW_1 E^{(N)}_j$ with
\beqa
p_1(x,y) = (x-y) (x^2 - (q^2+q^{-2}) xy +y^2 -\rho)\  .
\eeqa
For each pair $i,j$ it is straightforward to check that $p_1(\theta_i,\theta_j)=0$ if  $|i-j|\leq 1$ or $|i-j|=N-1$, while according to Lemma \ref{lem:ctriplep} we have $E^{(N)}_i \cW_1 E^{(N)}_j =0$  if  $1<|i-j|< N-1$. It implies  $\Delta_1=0$ and thus the first equation in (\ref{Talg}) holds. Similar arguments are used to show the second equation in (\ref{Talg}).
\end{proof}

Similar arguments are used to derive the higher-order $q-$Dolan-Grady relations for the cyclic TD pair.
\begin{prop}\label{prop-qOA}
Assume $q=e^{\frac{\rmi\pi}{N}}$. Let $\cW_0,\cW_1$ denote a cyclic TD pair with spectra given by (\ref{spec-root}) with $a=a^*=0$. Let $r$ be a positive integer such that $1\leq r \leq N-1$. Then, the operators $\cW_0,\cW_1$ satisfy the $r$-th  higher-order $q-$Dolan-Grady relations~(\ref{qDGfinr}).
\end{prop}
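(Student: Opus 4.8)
The plan is to mimic exactly the structure of the proof of the preceding proposition (the $r=1$ case), now replaying it for the general higher-order relations \eqref{qDGfinr}. The key observation is that the higher-order $q$-Dolan-Grady relations, just like the basic ones, are encoded by the polynomials $p_r(x,y)$ of Theorem~\ref{hqdg}: the coefficients $a_{ij}$ appearing in \eqref{hqdgr} are precisely the coefficients $c_j^{[r,p]}$ of \eqref{cfinr}. So I would first let $\Delta_r$ denote the left-hand side of the first relation in \eqref{qDGfinr} (which, with the substitution $\cW_0\to C$, $\cW_1\to C^*$, is the $r$-th higher-order $q$-Dolan-Grady expression), and compute its matrix elements between primitive idempotents. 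Using the explicit form of the polynomial relation one gets, for $i,j\in\mathbb{Z}_N$,
\begin{equation*}
E^{(N)}_i\,\Delta_r\,E^{(N)}_j = p_r(\theta_i,\theta_j)\;E^{(N)}_i\,\cW_1^{\,r}\,E^{(N)}_j,
\end{equation*}
where $p_r(x,y)=(x-y)\prod_{s=1}^r\bigl(x^2-(q^2+q^{-2})xy+y^2-\rho_s\bigr)$ is the reduced specialization ($\gamma_s=0$) of the polynomial from the Definition preceding Theorem~\ref{hqdg}, with the parameters $\beta_s,\delta_s$ inherited from the $q$-Racah spectrum via Lemma~\ref{lem:polypart}.

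The proof then splits into the two factors of this product, exactly as before. First, the idempotent factor: by Lemma~\ref{lem:ctriplep}(i), since $1\le r\le N-1$ forces $r<N/2$ in the relevant range (or is handled directly), one has $E^{(N)}_i\,\cW_1^{\,r}\,E^{(N)}_j=0$ whenever $r<|i-j|<N-r$. Second, the polynomial factor: I must show $p_r(\theta_i,\theta_j)=0$ on the complementary range, i.e. whenever $|i-j|\le r$ or $|i-j|\ge N-r$. For $|i-j|\le r$ this is the statement (noted right after the Definition of $p_r$) that $p_r(\theta_i,\theta_j)=0$ when $|i-j|\le r$, which follows from Lemma~\ref{lem:polypart} applied to the quadratic factor indexed by $s=|i-j|$. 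The genuinely new ingredient is the root-of-unity case $|i-j|\ge N-r$: here I would exploit the cyclic structure of the spectrum \eqref{spec-root}. Writing $\theta_t=bq^{2t}+cq^{-2t}$ (recall $a=0$) and $q=e^{\rmi\pi/N}$, so that $q^{2N}=1$, the eigenvalues satisfy $\theta_{t}=\theta_{t+N}$; hence if $|i-j|=N-s$ for some $1\le s\le r$, then modulo the cyclicity $\theta_j$ equals the eigenvalue at distance $s$, and the corresponding quadratic factor in $p_r$ vanishes by the same Lemma~\ref{lem:polypart} identity (the defining relation for index $s$). Thus $p_r(\theta_i,\theta_j)=0$ on the whole complementary range.

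Combining the two factors, every matrix element $E^{(N)}_i\,\Delta_r\,E^{(N)}_j$ vanishes: on the range $r<|i-j|<N-r$ the idempotent factor kills it, and on the range $|i-j|\le r$ or $|i-j|\ge N-r$ the scalar $p_r(\theta_i,\theta_j)$ kills it. Since $\one=\sum_{p\in\mathbb{Z}_N}E^{(N)}_p$, summing over all $i,j$ gives $\Delta_r=\sum_{i,j}E^{(N)}_i\Delta_r E^{(N)}_j=0$, which is the first relation in \eqref{qDGfinr}. The second relation follows by the symmetric argument, interchanging the roles of $\cW_0,\cW_1$ and of the starred and unstarred idempotents and spectra. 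I expect the main obstacle to be the precise bookkeeping in the root-of-unity case $|i-j|\ge N-r$: one must check carefully that the cyclic identification $\theta_t=\theta_{t+N}$ indeed makes the appropriate quadratic factor vanish for every value of $|i-j|$ in that range, and that the constraint $1\le r\le N-1$ is exactly what guarantees the two ranges $\{|i-j|\le r\}\cup\{|i-j|\ge N-r\}$ and $\{r<|i-j|<N-r\}$ tile all of $\mathbb{Z}_N$ without gap or overlap, so that no pair $(i,j)$ escapes both killing mechanisms.
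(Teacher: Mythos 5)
Your proposal follows the paper's own proof essentially step for step: sandwich $\Delta_r$ between the primitive idempotents $E^{(N)}_i$, $E^{(N)}_j$, kill the middle range $r<|i-j|<N-r$ with Lemma~\ref{lem:ctriplep}, and kill the complementary range $|i-j|\le r$ or $|i-j|\ge N-r$ by the vanishing of the scalar $p_r(\theta_i,\theta_j)$, where the root-of-unity periodicity $\theta_{t+N}=\theta_t$ (from $q^{2N}=1$) handles the genuinely new range $|i-j|\ge N-r$. You correctly identified both killing mechanisms and the tiling of $\mathbb{Z}_N$; your remark that for $r\ge N/2$ the middle range is empty, so the polynomial factor alone suffices, is a point the paper leaves implicit.

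One concrete error must be fixed, though: your displayed polynomial
\begin{equation*}
p_r(x,y)=(x-y)\prod_{s=1}^r\bigl(x^2-(q^2+q^{-2})xy+y^2-\rho_s\bigr)
\end{equation*}
has the wrong $xy$-coefficient for $s\ge 2$. For the $q$-Racah spectrum with $a=0$, Lemma~\ref{lem:polypart} gives $\beta_s=[2s]_{q^2}/[s]_{q^2}=q^{2s}+q^{-2s}$ and $\delta_s=[s]^2_{q^2}\,\rho$, i.e.\ the polynomial (\ref{polyqons}) actually used in the paper,
\begin{equation*}
p_r(x,y) = (x-y)\prod_{s=1}^{r} \Bigl(x^2- \frac{[2s]_{q^2}}{[s]_{q^2}}\,xy +y^2 - [s]^2_{q^2}\, \rho\Bigr).
\end{equation*}
With the constant coefficient $q^2+q^{-2}$ in every factor, neither the identity $E^{(N)}_i\Delta_r E^{(N)}_j=p_r(\theta_i,\theta_j)\,E^{(N)}_i\cW_1^r E^{(N)}_j$ (whose expansion coefficients are precisely the $c_j^{[r,p]}$ of (\ref{cfinr})) nor the vanishing $p_r(\theta_i,\theta_j)=0$ for $2\le|i-j|\le r$ would hold. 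Since you simultaneously state that $\beta_s,\delta_s$ are inherited from Lemma~\ref{lem:polypart}, this reads as a transcription slip rather than a conceptual gap; once corrected, your cyclicity bookkeeping goes through cleanly --- indeed at $q=e^{\rmi\pi/N}$ one has $\beta_{N-s}=\beta_s$ and $\delta_{N-s}=\delta_s$, so the $s$-th factor vanishes directly at $|i-j|=N-s$, exactly as in the paper.
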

\begin{proof} 
Let $\Delta_r$ denote the expression of the left-hand side of the first equation in (\ref{qDGfinr}). We show $\Delta_r=0$. For $i,j\in \mathbb{Z}_N$, one has $E^{(N)}_i \Delta_r E^{(N)}_j = p_r(\theta_i,\theta_j) \ E^{(N)}_i \cW_1^r E^{(N)}_j $ with
\beqa
p_r(x,y) = (x-y)\prod_{s=1}^{r} \left(x^2- \frac{[2s]_{q^2}}{[s]_{q^2}}xy +y^2 - [s]^2_{q^2} \rho\right)\ . \label{polyqons}
\eeqa
For each $i,j$ it is straightforward to check that $p_r(\theta_i,\theta_j)=0$ if $ N-r\leq |i-j|\leq r$, while according to Lemma \ref{lem:ctriplep}  we have $E^{(N)}_i \cW_1^r E^{(N)}_j =0$  if $N-r>|i-j|> r$. It implies the first equation in (\ref{qDGfinr}). Similar arguments are used to show the second equation in (\ref{qDGfinr}).
\end{proof}

Note that any higher-order relation for $r\geq N$ can be reduced through the relations (\ref{relPt}) with $A\rightarrow \cW_0$, $A^*\rightarrow \cW_1$. Finally, recall that the two operators $\cW_0$ and $\cW_1$ act on a finite dimensional vector space. It implies that $\cW_0,\cW_1$ should satisfy additionnal polynomial relations with coefficients depending on the representation considered. These relations can be undertstood as generalizations of the Askey--Wilson relations. Examples of such relations can be found in \cite{BB1} for the  indeterminate~$q$.\vspace{1mm}

\subsubsection{Mixed relations}
Let us now turn to the derivation of the mixed relations, combining the cyclic TD pair and the divided polynomials. According to the action of the divided polynomials on the eigenspaces $V_n,V^*_s$ in (\ref{Nstruct}), polynomial relations are indeed expected. Below, for simplicity we focus on the class of cyclic TD pairs with spectra (\ref{spec-root}) and  $a=a^*=0$ and the rational  functions $b$, $c$, $b^*$, and $c^*$ of the form
\beqa\label{eq:conditions-bc}
b= b_0 + b_1 q^{\beta},\qquad 
c= c_0 + c_1 q^{-\beta},\qquad 
b^*= b^*_0 + b^*_1 q^{\beta^*},\qquad 
c^*= c^*_0 + c^*_1 q^{-\beta^*}\ ,
\eeqa
for $b_i, b_i^*, c_i, c_i^*\in\mathbb C$, for $i=0,1$, and integer $\beta$ and $\beta^*$. For this choice, the spectra (\ref{spectnew}) of the divided polynomials (\ref{WNnew2}) simplifies. For instance, using (\ref{spectnew}) the spectrum of $\NtcWzero$ simplifies to
\beqa
\tilde{\theta}_n=(2n +\beta) C_N, \quad n=0,1,...,\dia  
\eeqa
with
\beqa
 C_N = \frac{(q-q^{-1})^N}{2N} q^{-N(N+1)/2}\left(b^N - c^N\right) 
\label{specred}
\eeqa
and similarly for $\NtcWun$, replacing $C_N\rightarrow C_N^*$ and $(b,c)\rightarrow (b^*,c^*)$ in the above expressions. 
Importantly, the spectra have an arithmetic structure with respect to the integer $n=0,1,2,...,d$. 

\begin{subrem} We note that multiplying the generators $\cW_0$  and $\cW_1$ from
our basic example (\ref{q-O-XXZ-action-1}) in Appendix~\ref{app:q-Racah} by an overall factor $q-q^{-1}$, the spectra of the corresponding two  operators satisfy the assumption~\eqref{eq:conditions-bc} on the coefficient functions $a$, $b$, $c$, etc.
\end{subrem}

\vspace{1mm}

Now, to derive the set of relations satisfied by the cyclic TD pair and the divided polynomials,
we proceed by analogy with \cite{Ter03,BV}.
 \begin{subdefn}
 Let $x$ and $y$ denote commuting indeterminates. For each positive integer $N$ and  $q=e^{\frac{\rmi\pi}{N}}$, we define the two-variable polynomials $p^{(i)}_{k}(x,y)$, $i=0,1$, as follows:
 \beqa\label{defpoly}
 \qquad p^{(i)}_{k}(x,y) = (x-y)\prod_{s=1}^{k} (x^2- 2xy +y^2 -\rho^{[N]}_i s^2)\quad
 \eeqa
 with
 \beqa
 \rho^{[N]}_0=4\left(C_N\right)^2, \qquad \rho^{[N]}_1=4\left(C_N^*\right)^2. \label{defpoly-r}
 \eeqa
 Observe $p^{(i)}_{k}(x,y)$ have a total degree $2k+1$ in $x,y$.
 \end{subdefn}

 \begin{sublem}\label{polynul} Let $k$ be a positive integer. We  have
 \beqa
 p^{(0)}_{k}(\tilde{\theta}_r,\tilde{\theta}_s)=0, \qquad p^{(1)}_{k}(\tilde{\theta}_r^*,\tilde{\theta}_s^*)=0  \qquad \mbox{for} \qquad |r-s|\leq k\ .\nonumber
 \eeqa
 \end{sublem}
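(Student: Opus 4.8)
The plan is to prove the vanishing directly from the explicit arithmetic form of the spectra. Recall from the discussion preceding Lemma~\ref{polynul} that, under the assumption~\eqref{eq:conditions-bc} with $a=a^*=0$, the eigenvalues of the divided polynomial $\NtcWzero$ are $\tilde{\theta}_n=(2n+\beta)C_N$ and those of $\NtcWun$ are $\tilde{\theta}^*_n=(2n+\beta^*)C_N^*$, with $C_N$, $C_N^*$ given by~\eqref{specred}. The key observation is that these spectra are \emph{equally spaced}: consecutive eigenvalues differ by the constant step $2C_N$ (resp. $2C_N^*$), so that $\tilde{\theta}_r-\tilde{\theta}_s=2(r-s)C_N$. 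This linear structure is precisely what makes each quadratic factor in~\eqref{defpoly} factorize nicely.

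First I would substitute $x=\tilde{\theta}_r$, $y=\tilde{\theta}_s$ into a single quadratic factor $x^2-2xy+y^2-\rho^{[N]}_0 s^2$. Since $x^2-2xy+y^2=(x-y)^2$, this becomes $(\tilde{\theta}_r-\tilde{\theta}_s)^2-\rho^{[N]}_0 s^2=\bigl(2(r-s)C_N\bigr)^2-4(C_N)^2 s^2$, where I used~\eqref{defpoly-r} that $\rho^{[N]}_0=4(C_N)^2$. This equals $4(C_N)^2\bigl((r-s)^2-s^2\bigr)$, so the $s$-th factor vanishes exactly when $(r-s)^2=s^2$, i.e. when $|r-s|=s$ (taking $s>0$). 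Thus the product $\prod_{s=1}^{k}$ in $p^{(0)}_k$ contains a vanishing factor precisely when $|r-s|$ equals one of the integers $1,2,\dots,k$.

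The conclusion then follows by splitting into cases on the value of $|r-s|$. If $r=s$, the overall prefactor $(x-y)=\tilde{\theta}_r-\tilde{\theta}_s$ vanishes. If instead $1\leq|r-s|\leq k$, then the index $s'=|r-s|$ lies in the range $\{1,\dots,k\}$ and the corresponding factor in the product vanishes by the computation above. Hence $p^{(0)}_k(\tilde{\theta}_r,\tilde{\theta}_s)=0$ whenever $|r-s|\leq k$, which is exactly the first claim. The statement for $p^{(1)}_k$ follows by the identical argument with $C_N\rightarrow C_N^*$, $\beta\rightarrow\beta^*$ and $\rho^{[N]}_0\rightarrow\rho^{[N]}_1$ throughout, using~\eqref{defpoly-r}.

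I do not anticipate a genuine obstacle here, as the proof is a short direct computation once the arithmetic form of the spectra is in hand; the only point requiring a little care is the bookkeeping of which factor vanishes for a given value of $|r-s|$, and the treatment of the degenerate case $r=s$ via the leading $(x-y)$ factor rather than the product. This mirrors exactly the role played by Lemma~\ref{lem:polypart} in the classical construction of the higher-order relations, and the equally-spaced spectrum plays here the role that the $q$-Racah quadratic relation played there.
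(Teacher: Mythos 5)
Your proof is correct and takes essentially the same approach as the paper: the paper's own (terser) proof likewise disposes of $r=s$ via the $(x-y)$ prefactor and then uses the arithmetic progression of the spectra, observing $(2s)^2-2(2s)(2s\pm 2k)+(2s\pm 2k)^2=4k^2$ so that the quadratic factor with product index equal to $|r-s|$ vanishes. Your only addition is the explicit bookkeeping $s'=|r-s|$, which usefully disambiguates the collision between the product index and the eigenvalue index in~\eqref{defpoly}.
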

\begin{proof} For $r=s$, the relation obvioulsy holds. Due to the arithmetic progression of $\tilde{\theta}_s$ and $\tilde{\theta}_s^*$, one observes  $(2s)^2 -2(2s)(2s\pm 2k) + (2s\pm 2k)^2 =4k^2$. The two equalities then follow.
\end{proof}

\begin{thm}\label{prop:mixed-rel-k-zero}
Let $r$ be a positive integer. Then, the operators $\mathcal{W}_i$ and the divided polynomials $\NtcWi$, for $i\in\mathbb{Z}_2$, satisfy the mixed relations
\beqa
\sum_{p=0}^{N-1}{\sum\limits_{k=0}^{2N-1-2p}{(-1)^{p+k}\rho_i^p c_k^{[N-1,p]}\mathcal{W}_i^{2N-1-2p-k}\NtcWipun\mathcal{W}_i^k}} &=&0 \label{eq:mix1},\\
&&\label{eq:mix2} \\
\left(\NtcWi\right)^3\mathcal{W}_{i+1}-3\left(\NtcWi\right)^2\mathcal{W}_{i+1}\NtcWi+3\NtcWi\mathcal{W}_{i+1}\left(\NtcWi\right) -\mathcal{W}_{i+1}\left(\NtcWi\right)^3
\!\!\!\! &=&\!\!\!\!\rho_i^{[N]}\left[\NtcWi, \mathcal W_{i+1}\right],\nonumber
\eeqa
where \ $\rho_0=\rho$ and $\rho_1=\rho^*$ are given by (\ref{rhored})  and $c_k^{[N-1,p]}$ by~\eqref{cfinr} with $q=e^{\rmi \pi/N}$, and $\rho_i^{[N]}$ by~\eqref{defpoly-r}.
\end{thm}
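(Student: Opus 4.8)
The plan is to mimic the idempotent-sandwiching technique used in the proofs of Proposition~\ref{prop-qOA} and Theorem~\ref{hqdg}, now applied to the mixed situation where one generator is a $\cW_i$ and the other is a divided polynomial $\NtcWipun$. The key structural input is twofold: first, by Proposition~\ref{actWN} the divided polynomial $\NtcWipun$ acts as a block $(2N+1)$-diagonal operator with respect to the eigenbasis of $\cW_i$, so that $E^{(N)}_r\,\NtcWipun\,E^{(N)}_s=0$ whenever $|r-s|>N$ in the usual $\Z$-indexed eigenspaces $V_n$ of $\cW_i$; second, the polynomials $p^{(i)}_k(x,y)$ of degree $2k+1$ from Lemma~\ref{polynul} vanish on all pairs of the \emph{arithmetic-progression} spectrum $\tilde\theta_n=(2n+\beta)C_N$ with $|r-s|\le k$. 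The idea is to choose the degree parameter $k=N$ and then compute $E^{(N)}_r\,\Delta\,E^{(N)}_s$, where $\Delta$ is the left-hand side minus the right-hand side of the relation to be proved, obtaining a factorization of the form $p_N(\tilde\theta_r,\tilde\theta_s)\cdot E^{(N)}_r\,\NtcWipun\,E^{(N)}_s$ (and symmetrically for the $\cW$-sandwich version). One then argues that for every pair $(r,s)$ either the scalar factor or the operator factor vanishes, forcing $\Delta=0$.

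First I would treat the two equations~\eqref{eq:mix1} and~\eqref{eq:mix2} separately according to which operator is diagonalized. For~\eqref{eq:mix1} I diagonalize $\cW_i$, so $\cW_i$ acts by $\theta_t$ on $V^{(N)}_t$ and the divided polynomial $\NtcWipun$ sits in the middle; the left-hand side is a polynomial $P(\cW_i)\,\NtcWipun\,Q(\cW_i)$ summed up, and sandwiching between $E^{(N)}_i$ and $E^{(N)}_j$ replaces each outer $\cW_i$ by its eigenvalue, producing the coefficient polynomial in $(\theta_i,\theta_j)$ whose coefficients are exactly the $c_k^{[N-1,p]}$ of~\eqref{cfinr}. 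By the same computation as in Theorem~\ref{hqdg} this coefficient polynomial equals $p_{N-1}(\theta_i,\theta_j)$ built from the $q$-Dolan--Grady data, which vanishes when $N-(N-1)\le|i-j|\le N-1$ i.e. for $1\le|i-j|\le N-1$, covering all nonzero index differences modulo $N$; on the complementary indices Lemma~\ref{lem:ctriplep} makes the operator factor vanish. For~\eqref{eq:mix2} I instead diagonalize $\NtcWi$ so that its \emph{arithmetic} spectrum $\tilde\theta_n$ is in play, the outer operators are $\NtcWi$ and the sandwiched operator is $\cW_{i+1}$; the scalar factor is then $p^{(i)}_1(\tilde\theta_r,\tilde\theta_s)=(\tilde\theta_r-\tilde\theta_s)\big((\tilde\theta_r-\tilde\theta_s)^2-\rho^{[N]}_i\big)$, which by Lemma~\ref{polynul} vanishes for $|r-s|\le 1$, while for $|r-s|\ge 2$ the sandwich $E_r\,\cW_{i+1}\,E_s$ vanishes by the block-$(2N+1)$-diagonal/TD-type action of $\cW_{i+1}$ on the $\NtcWi$-eigenbasis.

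The delicate point I expect to be the main obstacle is verifying the required vanishing of the \emph{operator} factors in the two regimes, because the relevant idempotents $E^{(N)}$ index spaces in $\Z_N$ for the cyclic generators but the divided polynomials naturally live on the $d+1$ eigenspaces $V_n$ indexed by $\Z$ (Lemma~\ref{lem2}, Proposition~\ref{actWN}). I would therefore be careful to keep track of which eigenbasis each idempotent refers to: for~\eqref{eq:mix2} the action $E_r\,\cW_{i+1}\,E_s=0$ for $|r-s|\ge2$ must be justified from the fact that $\cW_{i+1}$ acts tridiagonally on the arithmetic-spectrum eigenspaces of $\NtcWi$, which requires identifying the eigenspaces of $\NtcWi$ (these are the $V_n(e^{\rmi\pi/N})$) and re-reading the TD action~\eqref{eq:t1-q} through the lens of those spaces. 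I would close the argument by summing $\Delta=\sum_{r,s}E_r\,\Delta\,E_s$ over all idempotent pairs, noting that each summand vanishes, hence $\Delta=0$; the $N=2$ relations~\eqref{eq:WN2-mix-rel-1} and~\eqref{eq:WN2-mix-rel-2} are the special case $N=2$ of this computation, which is why the paper defers their proof to this general argument.
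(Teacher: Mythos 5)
Your overall strategy is the paper's own: sandwich the putative relation between primitive idempotents, factor out a scalar two-variable polynomial evaluated on the spectrum, and argue that for each index pair either the scalar or the operator factor vanishes. Your treatment of~\eqref{eq:mix2} coincides with the paper's proof in every detail: diagonalize $\NtcWi$, note by Lemma~\ref{lem2} that its eigenspaces are the fine spaces $V_n(e^{\rmi\pi/N})$ indexed by $n=0,\dots,d$ (not the coarse $\mathbb{Z}_N$-graded ones), so that $E_n\,\cW_{i+1}\,E_m=0$ for $|n-m|>1$ by the specialized tridiagonal action~\eqref{eq:t1-q}, while $p^{(i)}_1(\tilde\theta_n,\tilde\theta_m)=0$ for $|n-m|\le 1$ by Lemma~\ref{polynul}, which exploits the arithmetic-progression spectrum coming from~\eqref{eq:conditions-bc}. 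Your explicit worry about which eigenbasis each idempotent refers to is exactly the right bookkeeping and is resolved as you anticipate.

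There is, however, one incorrect step in your argument for~\eqref{eq:mix1}. You claim $p_{N-1}(\theta_i,\theta_j)$ vanishes only for $1\le|i-j|\le N-1$ and that ``on the complementary indices Lemma~\ref{lem:ctriplep} makes the operator factor vanish.'' The complementary case is $i=j$, and there Lemma~\ref{lem:ctriplep} gives you nothing: it controls sandwiches $E^{(N)}_p C^r E^{(N)}_m$ of \emph{powers of the cyclic pair} (and only for $r<N/2$), not of the divided polynomial, and the diagonal blocks $E^{(N)}_i\,\NtcWipun\,E^{(N)}_i$ are generically nonzero --- the divided polynomial has nonzero eigenvalues $\tilde\theta_n$, and it has no useful band structure relative to the coarse $\mathbb{Z}_N$-graded eigenspaces $V^{(N)}_t$ (Proposition~\ref{actWN} gives block $(2N+1)$-diagonality only with respect to the fine decomposition $\{V_n\}$). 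The repair is immediate and is what the paper does: $p_{N-1}(x,y)$ carries the prefactor $(x-y)$ from~\eqref{polyqons}, so $p_{N-1}(\theta_i,\theta_i)=0$ as well, hence the \emph{scalar} factor vanishes for all $0\le|i-j|\le N-1$, i.e.\ for every pair of indices in $\mathbb{Z}_N$, and no property of the sandwiched operator is needed at all for~\eqref{eq:mix1}. (A minor slip of the same kind: your preamble says to ``choose the degree parameter $k=N$,'' but the correct degrees are $r=N-1$ for~\eqref{eq:mix1} and $k=1$ for~\eqref{eq:mix2}, as your detailed treatment then correctly uses; $k=N$ belongs to Theorem~\ref{propNOns}.) With that one-line fix your proof is complete and agrees with the paper's.
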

\begin{proof} We give first a proof of (\ref{eq:mix1}) for $i=0$.
Let $\Delta^{(1)}$ denote the corresponding left-hand side of (\ref{eq:mix1}).  To show that $\Delta^{(1)}=0$, we recall that $q=e^{\frac{\rmi\pi}{N}}$ and  $\cW_0,\cW_1$ is a cyclic TD pair, we thus use the primitive idempotents $E_n^{(N)}$ defined in~\eqref{EN-def} which are projectors onto the eigenspaces $V_n^{(N)}$ in~\eqref{esp}.  Observe then 
\beqa
E_n^{(N)}\Delta^{(1)} E_m^{(N)} = p_{N-1}(\theta_n,\theta_m) E_n^{(N)} \NtcWun E_m^{(N)},\qquad 0\leq n,m\leq N-1,\nonumber
\eeqa
where the two-variable polynomial (\ref{polyqons}) has  to be expanded as a power series in the variable $x,y$. Recall then (\ref{polyqons}) with
 $r=N-1$,  we thus have  $p_{N-1}(\theta_n,\theta_m)=0$ for $|n-m|\leq N-1$. This  implies  $\Delta^{(1)}=0$.

We then prove (\ref{eq:mix2}) for $i=0$ in a similar way.
Let $\Delta^{(2)}$ denote the corresponding left-hand side of (\ref{eq:mix2}). To show $\Delta^{(2)}=0$, we use now the primitive idempotents $E_n$ which are the projectors onto the eigenspaces $V_n$ associated with the eigenvalues $\tilde{\theta}_n$. 
We have 
\beqa
E_n\Delta^{(2)} E_m = p_{1}^{(0)}(\tilde{\theta}_n,\tilde{\theta}_m) E_n \cW_1 E_m\nonumber
\eeqa
where the two-variable polynomial (\ref{defpoly}) has to be expanded as a power series in the variables $x$, $y$. According to~\eqref{eq:t1-q}
one has $E_n \cW_1 E_m=0$ for $|n-m|>1$. Using Lemma \ref{polynul} for $k=1$, it implies  $\Delta^{(2)}=0$. Similar arguments are used to show (\ref{eq:mix1}), (\ref{eq:mix2}) for $i=1$.
\end{proof}
\vspace{1mm}

Finally, we identify   relations satisfied by the divided polynomials.
\begin{thm}\label{propNOns} The divided polynomials
$\NtcWi$,  for  $i\in\mathbb{Z}_2$, satisfy the following relations:
\begin{eqnarray}
\sum_{p=0}^{N}\  \sum_{k=0}^{2N+1 -2p}  (-1)^{k+p}  \left(\rho_i^{[N]}\right)^{p}\, {d}_{k}^{[N,p]}\, \left(\NtcWi\right)^{2N+1-2p-k} \NtcWipun \left(\NtcWi\right)^{k}&=&0\ ,
\label{genOAhom}
\end{eqnarray}
with 
\begin{eqnarray}
d^{[N,p]}_k=
\binom{2N+1-2p}{k}
\sum\limits_{1\le s_1<\dots <s_p\le N}{s_1^2s_2^2\dots s_p^2} \qquad \mbox{with}\qquad k=0,1,...,N-p.
 \label{coeffq1}
\end{eqnarray}
\end{thm}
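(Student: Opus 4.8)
The plan is to follow exactly the idempotent-projection strategy used in the proofs of Proposition~\ref{prop-qOA} and Theorem~\ref{prop:mixed-rel-k-zero}, now applied to the pair of divided polynomials $\NtcWi,\NtcWipun$ alone. The key structural inputs are: (a) by Lemma~\ref{lem2} (or Lemma~\ref{lem3} for $N=2$), the divided polynomial $\NtcWi$ is diagonalizable with eigenvalues $\tilde\theta_n$, $n=0,\dots,d$, having the arithmetic form $\tilde\theta_n=(2n+\beta)C_N$; (b) by Proposition~\ref{actWN}, the companion divided polynomial $\NtcWipun$ acts in the $\NtcWi$-eigenbasis as a block $(2N+1)$-diagonal matrix, i.e.\ its idempotent-sandwiched pieces vanish once the index gap exceeds~$N$; and (c) the defining relation \eqref{genOAhom} is, up to coefficients, the $r=N$ instance of the higher-order polynomials $p_N^{(i)}(x,y)$ of \eqref{defpoly}. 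Let $E_n$ denote the primitive idempotent of $\NtcWi$ onto the eigenspace $V_n\equiv V_n(e^{\rmi\pi/N})$.

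First I would let $\Delta$ denote the left-hand side of \eqref{genOAhom} and compute $E_n\Delta E_m$. Because every monomial in $\Delta$ has exactly one central factor of $\NtcWipun$ sandwiched between powers of $\NtcWi$, and $E_n$ projects onto the $\tilde\theta_n$-eigenspace, each power $\bigl(\NtcWi\bigr)^a$ adjacent to $E_n$ (resp.\ $E_m$) is replaced by the scalar $\tilde\theta_n^{\,a}$ (resp.\ $\tilde\theta_m^{\,a}$). This collapses the double sum into a two-variable polynomial evaluated at the eigenvalues, giving
\[
E_n\Delta E_m \;=\; p^{(i)}_{N}(\tilde\theta_n,\tilde\theta_m)\; E_n\,\NtcWipun\,E_m,
\qquad 0\le n,m\le d,
\]
where $p^{(i)}_N$ is the degree-$(2N+1)$ polynomial of \eqref{defpoly}. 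The second step is purely combinatorial: I would verify that the coefficients $(-1)^{k+p}\bigl(\rho_i^{[N]}\bigr)^p d^{[N,p]}_k$ appearing in \eqref{genOAhom}, with $d^{[N,p]}_k$ given by \eqref{coeffq1}, are precisely the coefficients obtained by expanding $p^{(i)}_N(x,y)=(x-y)\prod_{s=1}^N\bigl(x^2-2xy+y^2-\rho_i^{[N]}s^2\bigr)$ as a power series in $x,y$. The binomial factor $\binom{2N+1-2p}{k}$ records the choice of how the $k$ right-hand powers of $x$ are distributed, while the elementary-symmetric sum $\sum_{s_1<\dots<s_p}s_1^2\cdots s_p^2$ is the coefficient of the $\rho$-independent-degree term coming from selecting $p$ of the $N$ quadratic factors to contribute their $-\rho_i^{[N]}s_j^2$ constants; this is the standard Vieta/elementary-symmetric identification already used implicitly in Theorem~\ref{hqdg} and footnoted after \eqref{qDGfinr}.

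The two mechanisms then combine to kill $\Delta$ on every idempotent block. When $|n-m|>N$, the factor $E_n\,\NtcWipun\,E_m$ vanishes by the block $(2N+1)$-diagonality from Proposition~\ref{actWN}. When $|n-m|\le N$, the scalar $p^{(i)}_N(\tilde\theta_n,\tilde\theta_m)$ vanishes by Lemma~\ref{polynul} applied at $k=N$: indeed the arithmetic spacing $\tilde\theta_n-\tilde\theta_m=2(n-m)C_N$ forces $\tilde\theta_n^2-2\tilde\theta_n\tilde\theta_m+\tilde\theta_m^2=4(n-m)^2C_N^2=\rho_i^{[N]}(n-m)^2$, which is exactly the $s=|n-m|$ quadratic factor of $p^{(i)}_N$. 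Since $\one=\sum_n E_n$, summing $E_n\Delta E_m=0$ over all $n,m$ yields $\Delta=0$, and the same argument with $(b,c,C_N)\to(b^*,c^*,C_N^*)$ handles the $i=1$ case. I expect no genuine obstacle in the vanishing mechanism; the only delicate point is the bookkeeping step (b), matching the explicitly given $d^{[N,p]}_k$ of \eqref{coeffq1} to the coefficients of the product form \eqref{defpoly}, which I would carry out by expanding $\prod_{s=1}^N(\,\cdots\,)$ factor by factor and identifying the coefficient of each $x^{2N+1-2p-k}y^{k}$ with the corresponding elementary-symmetric weight times the binomial multiplicity.
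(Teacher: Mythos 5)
Your proof is correct and follows essentially the same route as the paper's: project the left-hand side between the primitive idempotents $E_n$, $E_m$ of $\NtcWi$, identify the resulting scalar factor as $p^{(i)}_N(\tilde{\theta}_n,\tilde{\theta}_m)$, and kill each block via Proposition~\ref{actWN} when $|n-m|>N$ and Lemma~\ref{polynul} (at $k=N$, using the arithmetic spectrum $\tilde{\theta}_n=(2n+\beta)C_N$ and $\rho_i^{[N]}=4C_N^2$) when $|n-m|\le N$. Your explicit check that the coefficients $(-1)^{k+p}\bigl(\rho_i^{[N]}\bigr)^p d^{[N,p]}_k$ are exactly the expansion coefficients of $(x-y)\prod_{s=1}^N\bigl((x-y)^2-\rho_i^{[N]}s^2\bigr)$ correctly fills in a bookkeeping step that the paper leaves implicit.
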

\begin{proof} Let $\Delta^{[N]}$  denote the expression on the left-hand side of (\ref{genOAhom}) for $i=0$. We
show $\Delta^{[N]}= 0$. Recall that $E_n$ denotes the projector onto the eigenspace $V_n$ associated with the eigenvalue $\tilde{\theta}_n$. We have then
\beqa
E_n\Delta^{[N]} E_m = p^{(0)}_{N}(\tilde{\theta}_n, \tilde{\theta}_m) E_n \NtcWun E_m .\nonumber
\eeqa
According to Proposition \ref{actWN}, one has $E_n \NtcWun E_m=0$ for $|n-m|>N$. Using Lemma \ref{polynul} for $k=N$, it implies  $\Delta^{[N]}=0$. Similar arguments are used to show (\ref{genOAhom})  for $i=1$.
\end{proof}

\begin{subrem} Note that the defining relations (\ref{genOAhom}) do not coincide with the higher-order $q-$Dolan-Grady relations (\ref{cfinr}) for $r=N$ and $q=1$. The main difference is the power of the operator $\NtcWipun$ entering in (\ref{genOAhom}) compared with (\ref{qDGfinr}) specialized at $q=1$.
\end{subrem}
\vspace{1mm}

Based on the previous results, we now introduce a new algebra that can be viewed as a higher-order generalization of the Onsager algebra. 
\begin{defn}\label{defOAN}
Let $N$ be a positive integer and $\rho_i^{[N]}, i\in {\mathbb Z}_2$, denote scalars in ${\mathbb K}$. The $N-$Onsager algebra $OA^{[N]}$ 
 is the associative ${\mathbb K}$-algebra with unit  generated by two elements $\NtW_0$, $\NtW_1$ subject to the relations 
\begin{eqnarray}
\sum_{p=0}^{N}\  \sum_{k=0}^{2N+1 -2p}  (-1)^{k+p}  \Bigl(\rho_i^{[N]}\Bigr)^{p}\, {d}_{k}^{[N,p]}\, 
\Biggl(\NtW_i\Biggr)^{2N+1-2p-k} \NtW_{i+1} \left(\NtW_i\right)^{k}&=&0,\label{genOAN}
\end{eqnarray}
with ${d}_{k}^{[N,p]}$ defined by (\ref{coeffq1}).
\end{defn}

Note that the classical Onsager algebra \cite{Ons,DG} corresponds to $N=1$ and $\rho_0^{[N]}=\rho_1^{[N]}=16$. \vspace{1mm}

\begin{cor}
The divided polynomials $\NtcWzero$ and $\NtcWun$ generate a (finite-dimensional) algebra which is a quotient
of  the $N-$Onsager algebra~$OA^{[N]}$ with $\rho_0^{[N]}$ and $\rho_1^{[N]}$ given in (\ref{defpoly-r}).
\end{cor}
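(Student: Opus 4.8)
The plan is to show two things: first, that the divided polynomials $\NtcWzero$ and $\NtcWun$ satisfy the defining relations \eqref{genOAN} of the $N$-Onsager algebra $OA^{[N]}$, and second, that they generate a finite-dimensional algebra. The first point is essentially already established, and the second follows from the finite-dimensionality of the ambient space.

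For the relations, I would simply invoke \thmref{propNOns}. Indeed, the operators $\NtcWzero,\NtcWun$ are the divided polynomials associated with a cyclic TD pair $\cW_0,\cW_1$ of the class under consideration (spectra \eqref{spec-root} with $a=a^*=0$ and the coefficient functions of the form \eqref{eq:conditions-bc}), and \thmref{propNOns} asserts precisely that they satisfy \eqref{genOAhom}, which is literally the relation \eqref{genOAN} defining $OA^{[N]}$ with the same coefficients ${d}_k^{[N,p]}$ from \eqref{coeffq1} and with $\rho_i^{[N]}$ given by \eqref{defpoly-r}. Thus there is a surjective algebra homomorphism $\pi\colon OA^{[N]}\to \mathcal{A}$ sending $\NtW_i\mapsto \NtcWi$, where $\mathcal{A}$ denotes the subalgebra of $\End(V)$ generated by $\NtcWzero$ and $\NtcWun$. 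Surjectivity is immediate since $\NtcWzero,\NtcWun$ generate $\mathcal{A}$ by definition. Consequently $\mathcal{A}\isom OA^{[N]}/\Ker\pi$ is a quotient of the $N$-Onsager algebra.

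For finite-dimensionality, I would note that $\mathcal{A}$ is a subalgebra of $\End(V)$, which is finite-dimensional because $V$ is finite-dimensional (as $\cW_0,\cW_1$ form a cyclic TD pair on a finite-dimensional vector space, recall \secref{sec:div-poly}). Hence $\mathcal{A}$ is automatically finite-dimensional, and the parenthetical claim in the statement requires no separate argument.

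The only point that deserves care, and which I expect to be the sole minor obstacle, is matching the two occurrences of $\rho_i^{[N]}$ so that the abstract relation \eqref{genOAN} is applied with exactly the scalar values realized by the divided polynomials. This is settled by comparing \eqref{defpoly-r}, which fixes $\rho_0^{[N]}=4(C_N)^2$ and $\rho_1^{[N]}=4(C_N^*)^2$ with $C_N$ as in \eqref{specred}, against the hypotheses of \thmref{propNOns}; since both use the same $\rho_i^{[N]}$ from \eqref{defpoly-r}, the homomorphism $\pi$ exists with precisely these parameters and no rescaling or specialization of the generators is needed.
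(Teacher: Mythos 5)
Your proposal is correct and follows essentially the same route as the paper: the corollary is an immediate consequence of \thmref{propNOns} (the divided polynomials satisfy the defining relations \eqref{genOAN} with exactly the $\rho_i^{[N]}$ of \eqref{defpoly-r}), yielding a surjection $OA^{[N]}\to\mathcal{A}\subseteq\End(V)$, with finite-dimensionality automatic since $V$ is finite-dimensional. Your explicit care in matching the parameters $\rho_i^{[N]}$ and in spelling out the surjective homomorphism is exactly what the paper leaves implicit.
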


\medskip
We finally conclude that the four operators $\W_i$ and $\NtcWi$, $i\in\mathbb{Z}_2$,  satisfy
\begin{enumerate}
\item the $q-$Dolan--Grady relations~\eqref{Talg} at $q=e^{\rmi\pi/N}$,
\item the polynomial relations
\beqa
P_N\bigl(\W_0;\{\theta_t\}\bigr)=0\qquad \text{and} \qquad P_N\bigl(\W_1;\{\theta^*_t\}\bigr)=0\ , \label{relPt-W}
\eeqa
where the polynomials $P_N$ are defined in~\eqref{P-def} together with~\eqref{spec-root},
\item  the mixed relations (\ref{eq:mix1}) and  (\ref{eq:mix2}), 
\item and  the $N-$Onsager algebra $OA^{[N]}$  relations (\ref{genOAhom}).
\end{enumerate}
\vspace{1mm}

 We also note that during the derivation of these relations we did not use the condition on  irreducibility of the action of the pair $\W_0$, $\W_1$. 
It is thus natural to define an (infinite-dimensional) algebra generated by the four generators subject to the  (1)-(4) defining relations.  Note that the algebra depends on 5 parameters: positive integer $N\geq 2$, and 4 complex parameters $b$, $c$, and $b^*$, $c^*$.  The first two relations give an analogue of the ``small quantum group'' for $U_q \widehat{s\ell}(2)$ at $q=e^{\frac{\rmi\pi}{N}}$ while the relations for the divided polynomials are analogues for the relations of the Lusztig's divided powers~\cite{Luszt} that form the classical algebra $U \widehat{s\ell}(2)$.
We thus conjecture that the algebra described in (1)-(4) is a specialization of the $q-$Onsager algebra at $q=e^{\frac{\rmi\pi}{N}}$.

\medskip

We finally note that our analysis in the last two sections can be easily extended to $q=e^{\rmi\pi m'/m}$ for rational $m'/m$ and coprime $m$, $m'$\,: the only conditions we actually use in Proposition~\ref{lem1} is $q^{2m}= 1$, so one has just to replace $N$ by $m$. The rest follows without changes.

\section{Orthogonal polynomials beyond Leonard duality revisited}\label{sec5}
All known one-variable orthogonal polynomials of the Askey scheme and their specializations satisfy a {\it bispectral problem} associated with  a three-term recurrence relation and a second-order differential or ($q-$)difference equation \cite{KS}. 
Generalizing Leonard's theorem \cite{Leo}, Bannai and Ito \cite{BI} showed furthermore that all orthogonal polynomials satisfying this type of  bispectral problem should  coincide with polynomials of the Askey scheme
including the so-called Bannai--Ito polynomials \cite{BI}. From the point of view of representation theory, all these orthogonal polynomials in one variable  find a natural interpretation within the framework of Leonard pairs \cite{Ter2}. Namely, for a pair of elements $A,A^*$ that act on an irreducible finite dimensional module and are diagonalizable, and satisfy the so-called Askey--Wilson relations 
\beqa
A^2A^* - (q^2+q^{-2}) AA^*A + A^*A^2 - \gamma (AA^*+A^*A) - \rho A^* &=& \gamma^* A^2+\omega A+\eta \one,\label{AWrel}\\
{A^*}^2A - (q^2+q^{-2})  A^*AA^* + A{A^*}^2 - \gamma^* (A^*A+AA^*) - \rho^* A &=& \gamma {A^*}^2+\omega A^*+\eta^* \one,\nonumber
\eeqa
with scalars $\beta,\gamma,\gamma^*,\rho,\rho^*,\omega,\eta,\eta^*$, it is known that there exist two bases with 
respect to which the two matrices representing $A,A^*$ are diagonal (resp. tridiagonal) and tridiagonal (resp. diagonal). 
Importantly, the overlap coefficients between the two bases coincide with the Askey--Wilson polynomials evaluated on a
 discrete support \cite{Zhed,Ter2}. They satisfy a three-term recurrence relation and a three-term difference equation \cite{Leo,BI}.

\subsection{Polynomials and  five-term difference equations}
In the literature, there are however isolated examples of one-variable orthogonal polynomials defined on a discrete support that {\it do not} satisfy  the Leonard duality property. Instead, the corresponding orthogonal polynomials defined on a discrete support satisfy a bispectral problem associated with a three-term recurrence  relation and a {\it five-term} difference equation. To our knowledge, the first example of such type is provided by the one-variable complementary Bannai--Ito polynomials $I_n(x)\equiv I_n(x;\rho_1,\rho_2,r_1,r_2)$ with scalars $\rho_1,\rho_2,r_1,r_2$ and variable $x$. They have been introduced in \cite{GVZ} through a Christoffel transformation of the Bannai--Ito polynomials \cite{BI}. By construction, the complementary Bannai--Ito polynomials solve the following bispectral problem. On one hand, they satisfy the three-term recurrence relation
\beqa
I_{n+1}(x) + (-1)^n\rho_2 I_n(x) + \tau_n I_{n-1}(x) = x I_n(x),\nonumber
\eeqa
where 
\beqa
\tau_{2p}&=& -\frac{p(p+\rho_1-r_1+1/2)(p+\rho_1-r_2+1/2)(p-r_1-r_2)}{(2p+g)(2p+g+1)},\nonumber\\
\tau_{2p+1}&=& -\frac{(p+g+1)(p+\rho_1+\rho_2+1)(p+\rho_2-r_1+1/2)(p-\rho_2-r_2+1/2)}{(2p+g+1)(2p+g+2)}\nonumber
\eeqa
with $g=\rho_1+\rho_2-r_1-r_2$. One the other hand, they are eigenfunctions of a second-order Dunkl shift operator. Namely, define $T^{\pm1}f(x)=f(x\pm 1)$ and the reflection operator $Rf(x)=f(-x)$. The complementary Bannai--Ito polynomials satisfy  a {\it five-term} difference equation of the form
\beqa
\cal{D}' I_n(x) = \tilde{\theta}'_n I_n(x)\label{opDprime}
\eeqa
where
\beqa
\cal{D}'=d'_1(x)T + d'_2(x)T^{-1} + d'_3(x)R + d'_4(x)TR -(d'_1(x)+d'_2(x)+d'_3(x)+d'_4(x))\one\nonumber
\eeqa
and the spectrum reads
\beqa
\qquad  \tilde{\theta}'_{2p}=p^2+(g+1)p,\quad   \tilde{\theta}'_{2p+1}=p^2+(g+2) +  \tilde{c}'_0.\label{spec-thet}
\eeqa
Here, $ \tilde{c}'_0$ is an arbitrary scalar. The corresponding explicit expressions for the rational functions $d'_i(x),i=1,...,4$, can be obtained from \cite[eq. (4.6)]{GVZ}.\vspace{2mm}

The second explicit example is given by the dual $-1$ Hahn polynomials $Q_{n}(x)\equiv Q_{n}(x;\rho_2,r_1,r_2)$ with scalars $\rho_2,r_1,r_2$ and variable $x$, considered either as a limiting case  $q\to -1$ of the dual $q-$Hahn polynomials  or as a limiting case $\rho_1\rightarrow \infty$ of the complementary Bannai--Ito polynomials \cite{TVZ,GVZ}. By construction,  they solve a bispectral problem which is defined as follows. They satisfy a  three-term recurrence relation:
\beqa
Q_{n+1}(x) +(-1)^n\rho_2 Q_{n}(x) + \sigma_n Q_{n-1}(x) =xQ_{n}(x)\nonumber
\eeqa
where
\beqa
\sigma_{2p}&=&-p(p-r_1-r_2),\nonumber\\
\sigma_{2p+1}&=&-(p+\rho_2-r_1+1/2)(p+\rho_2-r_2+1/2).\nonumber
\eeqa
The dual $-1$ Hahn polynomials also satisfy a {\it five-term} difference equation. They are the eigenfunctions of a difference operator of the Dunkl type:
\beqa
\cal{D}Q_{n}(x) =  \tilde{\theta}_n  Q_{n}(x)\label{opD}
\eeqa
where
\beqa
\cal{D}= d_1(x)T + d_2(x) T^{-1} + d_3(x)R + d_4(x)TR - (d_1(x)+d_2(x)+d_3(x)+d_4(x))\one \nonumber
\eeqa
and the spectrum reads \cite{GVZ}:
\beqa
  \tilde{\theta}_{2p} =p,\qquad    \tilde{\theta}_{2p+1} =p+\tilde{c}_0.\label{spec-nu}
\eeqa
Here, $\tilde{c}_0$ is an arbitrary scalar. The explicit expressions of the rational functions $d_i(x),i=1,...,4$, can be obtained from \cite[page 16]{GVZ}  (see also \cite{TVZ}).
Note that other known examples of one-variable polynomials beyond Leonard duality can be understood as descendants of the two above. For instance, the para-Krawtchouk and the symmetric Hahn polynomials can be seen as special cases of the complementary Bannai--Ito polynomials \cite{GVZ}.\vspace{1mm} 

\subsection{The complementary Bannai--Ito algebra}
Whereas the algebraic framework behind the orthogonal polynomials of the Askey scheme -- including the Bannai--Ito polynomials -- is encoded by the Askey--Wilson algebra with the defining relations (\ref{AWrel}) and the concept of Leonard pairs, the above examples  of orthogonal polynomials outside of Leonard duality arise within the representation theory of the so-called complementary Bannai--Ito algebra with two generators $\kappa_1,\kappa_2$ and involution~$r$. A presentation\footnote{In \cite{GVZ}, an alternative presentation is given in which case a third generator $\kappa_3=\big[\kappa_1,\kappa_2\big]$ is introduced.} of this algebra is given by the defining relations:
\beqa
\big[\kappa_1,r\big]=0,&& \{\kappa_2,r\}=2\delta_3, \qquad r^2=\one,\label{CBIalg}\\
\big[\kappa_1,\big[\kappa_1,\kappa_2\big]\big]&=& \frac{1}{2}\{\kappa_1,\kappa_2\} -\delta_2\big[\kappa_1,\kappa_2\big]r -\delta_3 \kappa_1 r + \delta_1\kappa_2 -\delta_1 \delta_3 r,\label{CBIalg-2}\\
\big[\kappa_2,\big[\kappa_2,\kappa_1\big]\big]&=& \frac{1}{2} \kappa_2^2 + \delta_2\kappa_2^2r + 2\delta_3\kappa_1r + 2\delta_3\big[\kappa_1,\kappa_2\big]r +\kappa_1 + \delta_4 r +\delta_5,\label{CBIalg-3}
\eeqa
where $\{x,y\}=xy+yx$ and $\delta_i$, for $i=1,\ldots,5$, are arbitrary scalars. Irreducible polynomial representations for the  complementary Bannai--Ito algebra (\ref{CBIalg}) have been constructed in \cite{GVZ}. For the complementary Bannai--Ito polynomials, the homomorphism
\beqa
\kappa_1\mapsto \cal{D}', \qquad \kappa_2 \mapsto x
\eeqa
with (\ref{opDprime}) is considered. In this case, the explicit expressions of the structure constants $\delta_i$, for $i=1,\ldots,5$, in terms of $r_1,r_2,\rho_1,\rho_2$ are given in \cite[eq. (5.6)]{GVZ}. In particular:
\beqa
\delta_1=\tilde{c}'_0(g+1-\tilde{c}'_0), \qquad \delta_2=g+3/2 -2\tilde{c}'_0\qquad \mbox{and} \qquad  \delta_3=\rho_2.\label{delta}
\eeqa
In this picture, the complementary Bannai--Ito polynomials defined on a discrete support  are interpreted as the overlap coefficients between the two  eigenbases in an irreducible finite dimensional vector space. With respect to the first basis, the generator $\kappa_1$ acts as a diagonal matrix with spectrum (\ref{spec-thet}) and the generator $\kappa_2$ acts as a tridiagonal matrix. With respect to the second basis, the generator $\kappa_2$ acts as a diagonal matrix with
Bannai--Ito's discrete spectrum (see \cite{GVZ}):
\beqa
 {\theta^*_n}'=(-1)^n(n/2 + h +1/4) -1/4, \quad n\in {\mathbb Z}\label{specstar}
\eeqa 
where $h$ is a scalar,  whereas  the generator $\kappa_1$ acts as a {\it five-diagonal} matrix.\vspace{1mm} 

\medskip

\subsubsection{Relation to the divided polynomials}
We are now in position to reconsider the example of \cite{GVZ} in light of the results presented in previous Sections. To this end, 
 we  choose $b$, $b^*$, $c$, and $c^*$ such that they have a pole at $q=e^{\rmi \pi/2}$ but the spectra of some of the operators among the corresponding cyclic TD pair and the divided polynomials remain finite  and non-trivial for $q=e^{\rmi \pi/2}$, and agree with the spectra for $\kappa_1$ and $\kappa_2$.  We fix
\beqa
a&=&0,\qquad b=\frac{\rmi}{4(q+q^{-1})^{1/2}}q^{2g+2},\quad \ c=-\frac{\rmi}{4(q+q^{-1})^{1/2}}q^{-2g-2},\label{paramBI}\\
a^*&=&-\frac{1}{4},\qquad b^*=-\frac{1}{4(q+q^{-1})}q^{4h+1}, \quad c^*=\frac{1}{4(q+q^{-1})}q^{-4h-1}\label{paramBI2}
\eeqa
where $g,h$ are arbitrary scalars. In this case, by straightforward calculations one finds that the spectrum of $A$ is trivial ($\theta_n=0$) and the spectrum of $\Aexs{2}$ is singular ($\tilde{\theta}_n^*\rightarrow \infty$). On the other hand, one finds that the spectra of $\Aex{2}$ and $A^*$  read, respectively:
\beqa
\tilde{\theta}_n=\frac{n^2}{4} +\frac{n}{2}(g+1),\qquad {\theta^*_n}=(-1)^n(n/2 + h+1/4) -1/4.\label{comp-BI}
\eeqa
We note that Lemma~\ref{lem3} can not be applied for our choice of $b$ and $c$, however the second derivative of $\theta_n(q)$ at $q=\rmi$ exists and we compute the spectrum following Remark~\ref{rem:Hopital}.
Observe that these expressions coincide with the spectra of the operators $\kappa_1$ in (\ref{spec-thet}), for the identification $\tilde{c}'_0=g/2+3/4$, and $\kappa_2$ in (\ref{specstar}), respectively, associated with the complementary Bannai--Ito polynomials defined on a discrete support~\cite{TVZ}.

\subsection{The dual $-1$ Hahn algebra}
As a limiting case of the complementary Bannai--Ito polynomials, for the dual $-1$ Hahn polynomials the algebraic structure that encodes all the properties of the polynomials is a degenerate version of the complementary Bannai--Ito algebra (\ref{CBIalg}) \cite{GVZ}. Using a certain limiting procedure, the defining relations instead read:
\beqa
\big[\kappa_1,r\big]=0,&& \{\kappa_2,r\}=2\gamma_3, \qquad r^2=\one,\label{dualHahn-alg}\\
\big[\kappa_1,\big[\kappa_1,\kappa_2\big]\big]&=&  -\gamma_2\big[\kappa_1,\kappa_2\big]r  + \gamma_1\kappa_2 -\gamma_1 \gamma_3 ,\nonumber\\
\big[\kappa_2,\big[\kappa_2,\kappa_1\big]\big]&=& \gamma_2\kappa_2^2r + 2\gamma_3\kappa_1r + 2\gamma_3\big[\kappa_1,\kappa_2\big]r +\kappa_1 + \gamma_4 r +\gamma_5.\nonumber
\eeqa
In this case,  the homomorphism:
\beqa
\kappa_1\rightarrow \cal{D}, \qquad \kappa_2 \rightarrow x
\eeqa
with (\ref{opD}) is considered.  The explicit expressions of the structure constants $\gamma_i,1,...,5$, in terms of $r_1,r_2,\rho_2$ is given in \cite[page 17]{GVZ}.  In particular:
\beqa
\gamma_1=\tilde{c}_0(1-\tilde{c}_0), \qquad \gamma_2=1-2\tilde{c}_0\qquad \mbox{and} \qquad  \gamma_3=\rho_2.\label{deltaHahn}
\eeqa
 By analogy with the complementary Bannai--Ito polynomials, the dual $-1$ Hahn polynomials defined on a discrete support  are interpreted as the overlap coefficients between the two eigenbases (for $\kappa_1$ and $\kappa_2$). Compared with the previous case, the spectrum of the  generator $\kappa_1$ is now given by (\ref{spec-nu}).\vspace{1mm}

\subsubsection{Relation to the divided polynomials}
For our second example, instead of (\ref{paramBI}) let us now choose:
\beqa
a=0,\qquad b=\frac{e^{\rmi \pi/4}}{2},\quad \ c=0 \label{paramHahn}
\eeqa
and (\ref{paramBI2}). In this case, the spectrum of $A$ is now given by $\theta_n=e^{\rmi\pi/4}(-1)^n/2$. On the other hand, the spectrum of $\Aex{2}$ simplifies to:
\beqa
\tilde{\theta}_n=n/2,\label{comp-Hahn}
\eeqa
whereas the spectrum of $A^*$ remains unchanged, given by $\theta^*_n$ in (\ref{comp-BI}). Observe that these expressions coincide  with the spectra of the operators $\kappa_1$ (\ref{spec-thet}) provided $\tilde{c}_0=1/2$ and $\kappa_2$ (\ref{specstar}), respectively, associated with the dual $-1$ Hahn polynomials defined on a discrete support.

\subsection{Non-symmetric generalization of the tridiagonal algebra} Recall that  the Askey--Wilson algebra (\ref{AWrel}) is a quotient of the tridiagonal algebra~\eqref{TD1}-\eqref{TD2}, i.e.,  the defining relations (\ref{AWrel}) imply  the defining relations~\eqref{TD1}-\eqref{TD2} but the reverse is not true in general.
 By analogy with this, we introduce a non-symmetric generalization of the tridiagonal algebra as  the associative $\K$-algebra with unit  generated by two elements $\Aex{2}$ and $A^*$ subject to the relations
\begin{align}
\Bigl[\Aex{2},(\Aex{2})^2 A^* -2\Aex{2} A^* \Aex{2} + A^*(\Aex{2})^2 -\gamma\big(\Aex{2} A^* +  A^*\Aex{2}\big) - \rho_0^{[2]} A^*\Bigr]&=0,\nonumber\\
\qquad\quad  \Bigl[A^*,{A^*}^4  \Aex{2} - 2{A^*}^2\Aex{2}{A^*}^2 + \Aex{2}{A^*}^4   + {A^*}^3  \Aex{2} - {A^*}^2\Aex{2}{A^*} - {A^*}\Aex{2}{A^*}^2 + \Aex{2}{A^*}^3 &\label{degrel}\\
 - {A^*}^2\Aex{2} - 2 A^*\Aex{2}A^* - \Aex{2}{A^*}^2 - A^*\Aex{2} -\Aex{2}A^* \Bigr]&=0,\nonumber
\end{align}
where $\gamma$ and $\rho_0^{[2]}$ are scalars. We show that the complementary Bannai--Ito~\eqref{CBIalg} and dual $-1$ Hahn~\eqref{dualHahn-alg} algebras are quotients of our  non-symmetric generalization of the tridiagonal algebra, for certain values of the scalars.

Indeed, for the choice (\ref{paramBI})-(\ref{paramBI2}) the relations satisfied by the two operators $\Aex{2}$ and $A^*$ can be derived based on the knowledge of the spectra and the action~\eqref{eq:t1-q} of $A^*$ on the eigenspaces for $\Aex{2}$. Applying similar techniques as the ones used to show Theorems  \ref{prop:mixed-rel-k-zero} and \ref{propNOns}, one finds that   the two operators satisfy the relations~\eqref{degrel}
with
\beqa
 \gamma=1/2 \quad \mbox{and} \quad  \rho_0^{[2]}=g^2/4+g/2+3/16.\nonumber
\eeqa
In the  case of the dual $-1$ Hahn algebra, the relations are satisfied with
\beqa
 \gamma=0 \quad \mbox{and} \quad  \rho_0^{[2]}=1/4.\nonumber
\eeqa

Now, observe  that the defining relations of   the complementary Bannai--Ito and dual $-1$ Hahn algebras imply\footnote{To obtain the first relation in~\eqref{degrel}, it is enough to compute the commutator of $\kappa_1$ with the left-hand side of~\eqref{CBIalg-2}, while for the second relation we use multiple commutators of~\eqref{CBIalg-3} with $\kappa_2$.} the relations~\eqref{degrel}: the second and third relations of (\ref{CBIalg}) with $\delta_2=0$ and $\delta_1=\rho_0^{[2]}$ (see (\ref{delta})) imply (\ref{degrel}) with $\gamma=1/2$.
Similarly, the second and third relations of (\ref{dualHahn-alg}) with $\gamma_2=0$ and $\gamma_1=\rho_0^{[2]}$ (see (\ref{deltaHahn})) imply (\ref{degrel}) with $\gamma=0$.\vspace{1mm}
We have thus demonstrated the following proposition.

\begin{Prop}\label{prop:BI} We have homomorphisms from the non-symmetric generalization of the tridiagonal algebra~\eqref{degrel} to the complementary Bannai--Ito and dual $-1$ Hahn algebras such as
\beqa
\Aex{2} \ \mapsto \kappa_1, \qquad A^*\mapsto \kappa_2,\label{mapkappa}
\eeqa
 with the kernel described by the relations for $\kappa_{1}$ and $\kappa_{2}$ given in~\eqref{CBIalg}, for the first case, and in~\eqref{dualHahn-alg} for the second.
\end{Prop}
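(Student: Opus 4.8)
The plan is to verify that, under the assignment \eqref{mapkappa}, the images $\kappa_1$ and $\kappa_2$ satisfy the two defining relations \eqref{degrel} inside the complementary Bannai--Ito algebra, and separately inside the dual $-1$ Hahn algebra. Since the non-symmetric generalization of the tridiagonal algebra is by definition the free unital associative $\K$-algebra on $\Aex{2}, A^*$ modulo the two-sided ideal generated by \eqref{degrel}, the universal property of this quotient guarantees that \eqref{mapkappa} extends to a unital algebra homomorphism precisely when the relations \eqref{degrel}, with $\gamma$ and $\rho_0^{[2]}$ specialized as stated, hold for $\kappa_1,\kappa_2$. The kernel is then the two-sided ideal generated by the remaining relations among $\kappa_1,\kappa_2,r$ in \eqref{CBIalg}--\eqref{CBIalg-3} (resp. \eqref{dualHahn-alg}), pulled back along \eqref{mapkappa}. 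Thus the whole statement reduces to checking two identities in each target algebra.

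For the first relation, I would rewrite its inner bracket as $[\kappa_1,[\kappa_1,\kappa_2]]-\gamma\{\kappa_1,\kappa_2\}-\rho_0^{[2]}\kappa_2$, recognizing $(\Aex{2})^2A^*-2\Aex{2}A^*\Aex{2}+A^*(\Aex{2})^2$ as the double commutator $[\kappa_1,[\kappa_1,\kappa_2]]$. Substituting \eqref{CBIalg-2} with $\delta_2=0$ and using $\gamma=1/2$, $\rho_0^{[2]}=\delta_1$ from \eqref{delta}, the $\{\kappa_1,\kappa_2\}$ and $\kappa_2$ terms cancel, leaving the inner bracket equal to $-\delta_3(\kappa_1+\delta_1)r$. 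Because $[\kappa_1,r]=0$, the outer commutator $[\kappa_1,-\delta_3(\kappa_1+\delta_1)r]$ vanishes, which is exactly the first relation of \eqref{degrel}. For the dual $-1$ Hahn algebra the same computation applies verbatim with \eqref{dualHahn-alg} in place of \eqref{CBIalg-2}, $\gamma_2=0$, and $\gamma=0$, $\rho_0^{[2]}=\gamma_1$ from \eqref{deltaHahn}.

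The second relation is the main obstacle. Writing $Z=\kappa_2$ and $W=\kappa_1$, I would first organize the degree-four inner bracket of \eqref{degrel} by its $Z$-degree, recognizing it as $[Z^2,[Z^2,W]]+\{Z,[Z,[Z,W]]\}-\{Z,\{Z,W\}\}-\{Z,W\}$. The seed identity is \eqref{CBIalg-3}, which expresses the double commutator $[\kappa_2,[\kappa_2,\kappa_1]]$ in terms of $\kappa_1$, $\kappa_2^2$, $r$ and $[\kappa_1,\kappa_2]r$. The strategy is to build the triple and quadruple brackets by taking repeated commutators and anticommutators of \eqref{CBIalg-3} with $Z=\kappa_2$, each time transporting every $r$ to the right through $\kappa_2$ via $\{\kappa_2,r\}=2\delta_3$ (so $\kappa_2 r=-r\kappa_2+2\delta_3$) and simplifying with $[\kappa_1,r]=0$, $r^2=\one$ and $\delta_2=0$. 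After assembling the full bracket $\Xi$ from these pieces, one checks that $\Xi$ reduces to an element whose commutator with $\kappa_2$ vanishes, i.e. the second relation of \eqref{degrel} holds. I expect the genuine difficulty to be purely this bookkeeping: tracking the many $\delta_3$-contributions generated when $r$ crosses $\kappa_2$, and verifying that all non-central remainders cancel. This is where the identity $\delta_2=0$ (resp. $\gamma_2=0$), which indeed holds for the specializations $\tilde c_0'=g/2+3/4$ and $\tilde c_0=1/2$, is essential, since it removes the $\kappa_2^2 r$-type obstructions in \eqref{CBIalg-3} that would otherwise survive.

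Once both relations are established --- in the complementary Bannai--Ito algebra with $(\gamma,\rho_0^{[2]})=(1/2,\,g^2/4+g/2+3/16)$, and in the dual $-1$ Hahn algebra with $(\gamma,\rho_0^{[2]})=(0,1/4)$ --- the two homomorphisms \eqref{mapkappa} exist with the asserted kernels, completing the proof. As an independent consistency check, these same relations may be recovered from the spectral analysis underlying Theorems~\ref{prop:mixed-rel-k-zero} and~\ref{propNOns}, applied to the divided polynomial $\Aex{2}$ and to $A^*$ under the specializations \eqref{paramBI}--\eqref{paramBI2} and \eqref{paramHahn}.
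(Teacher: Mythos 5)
Your proposal is correct and follows essentially the same route as the paper: the paper likewise proves the proposition by checking that the relations \eqref{CBIalg}--\eqref{CBIalg-3} (resp.\ \eqref{dualHahn-alg}) with $\delta_2=0$, $\delta_1=\rho_0^{[2]}$ (resp.\ $\gamma_2=0$, $\gamma_1=\rho_0^{[2]}$) imply \eqref{degrel}, obtaining the first relation from the commutator of $\kappa_1$ with \eqref{CBIalg-2} and the second from multiple (anti)commutators of \eqref{CBIalg-3} with $\kappa_2$ --- exactly your computation, which for the first relation you carry out in more detail than the paper's footnote. Your bookkeeping for the second relation does close: setting $Y=[\kappa_2,[\kappa_2,\kappa_1]]-\kappa_1$, your decomposition of the inner bracket equals $\{\kappa_2,\{\kappa_2,Y\}+Y\}$, which, using $\kappa_2 r=-r\kappa_2+2\delta_3$ and $r^2=\one$, collapses to a polynomial in $\kappa_2$ alone (all $r$-dependent terms cancel), so its commutator with $\kappa_2$ vanishes as you anticipated.
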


\begin{rem}
It is important to stress that the relations (\ref{degrel}) hold in general for any dimension of the eigenspaces $V_n$ and $V^*_s$ -- i.e., not necessarely one-dimensional eigenspaces -- contrary to the relations (\ref{CBIalg}) or (\ref{dualHahn-alg}).  
\end{rem}

\begin{rem}
According to the results of the previous Sections, it implies that the operator $\kappa_1$ can be written as a limit for $q=e^{\rmi \pi/2}$ of a quadratic combination of a second-order $q-$difference operator with a non-degenerate spectrum of $q-$Racah type. Indeed, it is easy to check that the structure of $\Aex{2}$ with (\ref{WNnew}) for (\ref{paramBI}) or (\ref{paramHahn}) and (\ref{paramBI2}) agrees  with the proposals \cite[eq. (4.2)]{GVZ} and \cite[eq. (5.6)]{TVZ}, respectively.
\end{rem}


To conclude, recall that tridiagonal pairs are certain generalizations of Leonard pairs. In this picture, the multivariable Gasper-Rahman orthogonal polynomials \cite{GR1} find a natural interpretation within the theory of tridiagonal pairs \cite{BM}, thanks to the construction of Iliev \cite{Iliev}. Thus, multivariable extensions of the complementary Bannai--Ito or dual $-1$ Hahn polynomials can be easily constructed  by using the algebra~\eqref{degrel}. It is clear that a potential starting point for the analysis of the corresponding extended bispectral problem is given by the elements (\ref{WNnew}) for $N=2$. 
\vspace{2mm}

\noindent{\bf Acknowledgments:} We thank P. Terwilliger for important comments on the manuscript and H. Saleur for discussions, comments, as well as interest in this work. P.B thanks  A.  Grunbaum, E. Koelink, L. Vinet and A. Zhedanov for pointing out some of the references and communications. P.B thanks CRM Montr\'eal/UMI 3457 for invitation and support, where part of this work has been completed.  A.M.G. thanks LMPT Tours for its hospitality during the visit in 2015 when this work was started.
P.B. and A.M.G. are supported by C.N.R.S.   The work of A.M.G. was also supported by a Humboldt fellowship and DESY. 
\vspace{0.5cm}

\begin{appendix}
\newcommand{\ccc}{\alpha}
\section{Examples of cyclic TD pairs}\label{app:example1}

In this Appendix, we construct  our two basic examples of cyclic TD pairs. They both come from representation theory of quantum algebras specialized  at roots of unity.

\subsection{Cyclic representations}\label{app:first-ex}
Let  $q=e^{\rmi\pi/4}$ and  $A_{\pm}$ 
 denote the $4\times 4$ matrices 
\begin{equation}\label{exL4:W1}
A_{\pm}\qquad = \qquad
\left(
\begin{array}{cccc}
 0 & q^{\pm1} k_+ & 0 & q^{\mp2} \ccc^2 k_- \\
 q^{\pm1} k_- & 0 & 2 k_+ & 0 \\
 0 & k_- & 0 & q^{\mp1} k_+ \\
 0 & 0 & q^{\mp1} k_- & 0 \\
\end{array}
\right)
\end{equation}
where $k_{\pm}$ and $\ccc$ are complex numbers.
These matrices are diagonalizable with the eigenvalues
\begin{equation}\label{specA1}
\mathrm{Spec} \; A_{\pm}:\qquad
\left\{\pm\sqrt{q^{\pm3} \ccc k_-^2+k_+k_-},\pm\sqrt{q^{\mp1}\ccc k_-^2 + k_+k_-}\right\}.
\end{equation}
We show that $A_\pm$ form a cyclic  TD pair  of cyclicity $4$ under certain conditions on $\ccc$ and $k_\pm$. Assume first that $k_-$ and $\alpha$ are non-zero,
 and moreover $\alpha\ne \pm q k_+/k_-$. The 
eigenstates of $A_+$ can be written in components as
\begin{align*}
v_0&=\left\{q^3 c_-\ffrac{\sqrt{k_- \left(q^{-1}\ccc k_-+k_+\right)} }{k_-},
\ccc+q^2\ffrac{\sqrt{2} k_+}{k_-},-q\ffrac{\sqrt{k_- \left(q^{-1} \ccc k_-+k_+\right)}}{k_-},1\right\},\\
v_1&=\left\{q^3 c_+\ffrac{ \sqrt{k_- \left(q^3\ccc k_-+k_+\right)}}{k_-},
-\ccc+q^2\ffrac{\sqrt{2} k_+}{k_-},q\ffrac{ \sqrt{k_- \left(q^3\ccc k_-+k_+\right)}}{k_-},1\right\},\\
v_2&=\left\{-q^3 c_-\ffrac{\sqrt{k_- \left(q^{-1}\ccc k_-+k_+\right)} }{k_-},
\ccc+q^2\ffrac{\sqrt{2} k_+}{k_-},q\ffrac{ \sqrt{k_- \left(q^{-1}\ccc k_-+k_+\right)}}{k_-},1\right\},\\
v_3&=\left\{-q^3 c_+\ffrac{\sqrt{k_- \left(q^3\ccc k_-+k_+\right)}}{k_-},
-\ccc+q^2\ffrac{\sqrt{2} k_+}{k_-},-q\ffrac{ \sqrt{k_- \left(q^3\ccc k_-+k_+\right)}}{k_-},1\right\},
\end{align*}
where we set $c_{\pm} =  (\ccc k_-\pm\sqrt{2}k_+)/k_-$. 
Note that these vectors are indeed linearly independent, with our assumption on $\alpha$ and $k_-$.
We observe that the action of $A_-$ on any of these eigenvectors~$v_p$, with $p\in\mathbb{Z}_4$, is tridiagonal with non-zero coefficients and corresponds to the action of $C^*=A_-$ in~\eqref{eq:ct1} if we set $V^{(N=4)}_p=\mathbb{C} v_p$. 
For example, the coefficients in the action of $A_-$ on $v_0$ (i.e. $A_- v_0 = c_0 v_0+c_1 v_1 +c_2 v_2 +c_3 v_3 $) are
$c_2=0$, 
$c_0=-2 q\ffrac{ k_+ \sqrt{k_-(q^{-1}\ccc k_-+k_+)}}{\ccc k_-}$ (recall
 our assumption on $\ccc$ and $k_-$ above), and $c_1$ and $c_3$ are also non-zero (but more complicated) expressions in $k_{\pm}$ and~$\ccc$.  Similarly, the action of $A_-$ on $v_1$ is a linear combination of $v_0$, $v_1$ and $v_2$, etc. We similarly have an eigenbasis for $A_-$ and the cyclic tridiagonal action of $A_+$. This shows that the conditions (i)-(iii) in Definition~\ref{deficitri} are satisfied for $N=4$,  if the spectrum~\eqref{specA1} is non-degenerate. Let us call  values of $\alpha$ and $k_{\pm}$ \textit{generic} if the  spectrum~\eqref{specA1}  is non-degenerate and the coefficients $c_{i,i}$ and $c_{i,i\pm1}$ in the action $A_- v_i = \sum_{j=0}^3 c_{i,j} v_j$ are non-zero and similarly for $c^*_{i,j}$ for the action of $A_+$ in the second basis. Then at such  generic values of  $\alpha$ and $k_{\pm}$  our example provides a family of cyclic TD pairs which are not ordinary TD pairs.
\vspace{1mm}

 In this example, it is easy to see that for generic $\alpha$ and $k_{\pm}$ the condition (iv) holds as well or that the action of $A_{\pm}$ is irreducible. Indeed, we first note that the  spectrum of $A_+$ is non-degenerate, therefore an invariant subspace $W$ is a span of the (one-dimensional) $A_+$-eigenspaces (otherwise, $W$ would not be invariant even with respect to the action of $A_+$). 
Then, we do not have a  
1-dim invariant subspace: assume it is spanned by the $A_+$-eigenvector $v_i$ for some  
$i=0,1,2,3$, then its image $A_- v_i$ is in $\oC v_{i-1} + \oC v_i +   
\oC v_{i+1}$, which is not in $\oC v_i$. Suppose that we have a  
2-dimensional invariant subspace $W=\oC v_i + \oC v_j$, then consider  
$v_i$ -- its image under the action of $A_-$ is $c_{i,i}v_i +  
c_{i,i+1}v_{i+1} + c_{i,i-1}v_{i-1}$ with non zero coefficients, and  
it is thus not in $W$ for any choice of $v_j$. Suppose then we have a  
3-dimensional invariant subspace $W=\oC v_i + \oC v_j + \oC v_k$, then  
again  consider $v_i$ -- its image under the action of $A_-$ is  
$c_{i,i}v_i + c_{i,i+1}v_{i+1} + c_{i,i-1}v_{i-1}$ with non zero  
coefficients. This action of $A_-$ on $v_i \in W$ forces us to fix  
$j=i+1$ and $k=i-1$ or  $j=i-1$ and $k=i+1$. For any of these choices,  
we have that $v_{i+1}\in W$ while $A_- v_{i+1} = c_{i+1,i+1}v_{i+1} +  
c_{i+1,i+2}v_{i+2} + c_{i+1,i}v_{i}$, again with non-zero coefficients  
for generic parameters $\alpha$ and $k_{\pm}$. We see here the  
non-zero contribution of $\oC v_{i+2}$ which is not in $W$. We  
therefore conclude that there is only non-zero 4-dimensional invariant  
subspace $W=V$. Thus, there is no invariant proper subspaces
 under the action of the pair $A_{\pm}$, \textit{i.e.}, the condition (iv) of Definition~\ref{deficitri} holds indeed. We thus have the action of a cyclic TD pair with cyclicity~$4$. 
  Note that we have actually a cyclic Leonard pair here (the cyclic TD pair with `tridiagonal' instead of `block tridiagonal' action).
 \vspace{1mm}

We note that 
setting the parameter $\alpha=0$ in (\ref{exL4:W1}), the  pair $A_{\pm}$ becomes the (usual) TD pair of $q-$Racah type at $q=e^{\rmi\pi/4}$, i.e. with the action from Definition~\ref{tdp}. We have thus a continuous family of cyclic TD pairs parametrized by $\alpha$ that has a special point at $\alpha=0$.
We also note that for non-zero $\alpha$ our example  is not of $q-$Racah type, i.e. it is not a specialization of a TD pair of $q-$Racah type.
 \vspace{1mm}
 
We give a final remark on the origin of the construction of the $A_{\pm}$ operators. It is the result of the pull-back representation of the $q-$Onsager algebra  (identifying $A_+=\cW_0$ and $A_-=\cW_1$ in our $q-$Onsager algebra notations) under the homomorphism \cite{Bas3}
\beqa
\qO\to U_q (sl_2): \qquad A_{\pm} \mapsto k_+ q^{\pm1/2} E \sqrt{K^{\pm1}} + k_- q^{\mp1/2} F \sqrt{K^{\pm1}} + \epsilon_{\pm} K^{\pm1}\label{mapOqsl2Uqsl2}
\eeqa
 in  the $4$ dimensional semi-cyclic representation of $U_q (sl_2)$ at $q=e^{\rmi\pi/4}$. 
Recall that the $U_q (sl_2)$ action is given in a basis $\{w_j, \,  0\leq j\leq 3 \}$ by (see e.g.~\cite[Sec. 11.1]{CP})
\begin{equation}
K w_j = q^{3-2j} w_j, \qquad
E w_j =  [j]^2w_{j-1},\qquad
 F w_j = w_{j+1},\qquad \text{with} \quad w_4=\alpha^2 w_0.\label{mapKEF}
\end{equation}
The two matrices $A_\pm$ given in (\ref{exL4:W1}) are derived from (\ref{mapOqsl2Uqsl2}) with (\ref{mapKEF}) where, for simplicity,
we have chosen $\epsilon_\pm=0$. By construction \cite{Bas3}, $A_\pm$ satisfy the defining relations of the $q-$Onsager
 algebra (2.11) with $\rho=(q+q^{-1})^2k_+ k_-$, as can be checked explicitly.

\newcommand{\kp}{k_+}
\newcommand{\km}{k_-}

\subsection{$q-$Racah type representations}\label{app:q-Racah}
We consider now a $q-$Racah type representation of the $q-$Onsager algebra and show that it  satisfies Assumption~\ref{ass}. We thus construct a cyclic TD pair of the $q-$Racah type. The simplest and non-trivial one we were able to find is on an 8-dimensional vector space. We construct it as follows. Recall  the homomorphism of the $q-$Onsager algebra $\qO$ to the quantum affine algebra $U_q (\widehat{sl_2})$ defined on generators as
 \cite{Bas3}
\begin{equation}\label{q-O-hom-affine}
\begin{split}
\cW_0 \mapsto (k_+ E_1 + k_-  F_1) \sqrt{K_1} + \epsilon_{+} K_1,\\
\cW_1 \mapsto (k_+  F_0 + k_- E_0) \sqrt{K_0} + \epsilon_{-} K_0,
\end{split}
\end{equation}
where $E_i$, $F_i$, and\footnote{Note that we use here $q$ instead of $q^{1/2}$ in \cite{Bas3}.} $\sqrt{K_i}=q^{H_i/2}$ are the standard Chevalley generators of $U_q (\widehat{sl_2})$. 
The image of this homomorphism is a coideal subalgebra and one can use the iterated coproduct of $U_q (\widehat{sl_2})$ to define an evaluation representation of $\qO$ on the 8-dimensional  vector space (for any $u_i\in\mathbb{C}^{\times}$)
\begin{equation}\label{ex2-V}
V=\mathbb{C}^2(u_1)\otimes\mathbb{C}^2(u_2)\otimes\mathbb{C}^2(u_3) \ .
\end{equation}
This representation is obtained by the recursive formula (applied for  $n=3$ here) \cite{Bas3}
\begin{equation}\label{q-O-XXZ-action}
\begin{split}
\cW_0^{(n)} &= \bigl(\kp u_n  q^{1/2}  \sigma_+ + k_- u_n^{-1} q^{-1/2} \sigma_- \bigl)\sqrt{K} \otimes \one^{(n-1)} + K\otimes  \cW_0^{(n-1)}  ,\\
\cW_1^{(n)} &=\bigl(\kp u_n^{-1}  q^{-1/2}  \sigma_+ + k_- u_n q^{1/2} \sigma_- \bigl)\sqrt{K^{-1}} \otimes \one^{(n-1)} + K^{-1}\otimes  \cW_1^{(n-1)},
\end{split}
\end{equation}
with $\cW_0^{(0)}=\epsilon_+$ and $\cW_1^{(0)}=\epsilon_-$, $\sigma_{\pm}$ are Pauli matrices\footnote{$\sigma_\pm=(\sigma_x\pm \mbox{i}\sigma_y)/2$ and $\sigma_{x,y,z}$ are the usual Pauli matrices
\beqa
\sigma_x=\left(
\begin{array}{cc}
 0    & 1 \\
 1 & 0 
\end{array} \right) \ ,\qquad
\sigma_y =\left(
\begin{array}{cc}
 0    & -\mbox{i} \\
\mbox{i} & 0 
\end{array} \right) \ ,\qquad
\sigma_z =\left(
\begin{array}{cc}
 1    & 0 \\
 0 & -1 
\end{array} \right) \ .\label{Pauli}\nonumber
\eeqa
} and $\sqrt{K}$ is the diagonal $2\times 2$ matrix with entries $q^{1/2}$ and $q^{-1/2}$.

We fix the three evaluation parameters for simplicity as $u_1=u_2=u_3=1$, abbreviate $\cW_i=\cW_i^{(3)}$ and  set $\epsilon_{\pm}=k_{\pm}=1$ to make the exposition below clearer.
In a basis in $V$, we have then the matrices
\begin{equation}\label{q-O-XXZ-action-1}
\cW_{0(1)}\; \equiv\; A_{\pm}\; =\; \left(
\begin{array}{cccccccc}
 q^{\pm3} & q^{\pm2}   & q^{\pm1}    & 0 &  1 & 0 & 0 & 0 \\
 q^{\pm2}  & q^{\pm1} & 0 & q^{\pm1}    & 0 &  1 & 0 & 0 \\
 q^{\pm1}   & 0 & q^{\pm1} & 1  & 0 & 0 & 1  & 0 \\
 0 & q^{\pm1}   & 1 & q^{\mp1} & 0 & 0 & 0 & 1  \\
 1 & 0 & 0 & 0 & q^{\pm1} & 1  & q^{\mp1}    & 0 \\
 0 & 1 & 0 & 0 & 1 & q^{\mp1} & 0 &  q^{\mp1}   \\
 0 & 0 & 1 & 0 & q^{\mp1}   & 0 & q^{\mp1} & q^{\mp2}   \\
 0 & 0 & 0 & 1 & 0 & q^{\mp1}  & q^{\mp2}   & q^{\mp3} \\
\end{array}
\right)\ ,
\end{equation}
where we set $A_+$ for $\cW_0$ and $A_-$ for $\cW_1$.
With respect to the $\cW_0$ action
thus defined, the vector space $V=\bigl(\mathbb{C}^2\bigr)^{\otimes 3}$ 
 is decomposed onto a direct sum of four $\cW_0$-eigenspaces
\beqa\label{ex2-V-decomp}
V=V_{[-2]} \oplus V_{[0]}\oplus V_{[2]} \oplus V_{[4]},
\eeqa
where each eigenspace $V_{[\lambda]}$ corresponds to the eigenvalue  $[\lambda]_q$, and $V_{[0]}$ and $V_{[2]}$ are $3$-dimensional 
while $V_{[-2]}$ and $V_{[4]}$ are 1-dimensional eigenspaces.
 Note that the spectrum of $\cW_0$ is of the $q-$Racah type, i.e. of the form (\ref{specgen-I}) with the identification:
\beqa\label{eq:abc-ex2}
a=0,\qquad b= \frac{q^{-2}}{q-q^{-1}}, \qquad c= -\frac{q^{2}}{q-q^{-1}}\label{valabc}
\eeqa
and $p=0,1,2,3$.
We denote the corresponding  $\cW_0$-eigenvectors for~$V_{[\lambda]}$ as $v_k^{[\lambda]}$, where $k\in\{1,2,3\}$ for $\lambda$ equal $0$ or $2$, and $k=1$ for $\lambda$ equal $-2$ or $4$. Then, for $\lambda=-2$ we found
\begin{align}\label{eq:eigenv1}
v_1^{[-2]} &= \left(-1,q,1,-q,q^{-1},-1,-q^{-1},1\right)\qquad\qquad\qquad\;
\end{align}
and  for $\lambda=0$
\begin{align}
v_1^{[0]} & = \left(q^{-2},q^{-3}-q^{-1},q^{-4}-q^{-2},-q^{-3},-q^{-3},0,0,1\right) , \label{eq:eigenv2}\\ 
v_2^{[0]} & = \left(0,q^{-2},q^{-3}-q^{-1},-q^{-2},-q^{-2},0,1,0\right) , \label{eq:eigenv3}\\
v_3^{[0]} & = \left(0,0,q^{-2},-q^{-1},-q^{-1},1,0,0\right) , \label{eq:eigenv4}
\end{align}
and  for $\lambda=2$
\begin{align}
v_1^{[2]} & = \left(0,0,-1,-q^{-1},q,1,0,0\right), \label{eq:eigenv5}\\
v_2^{[2]} & = \left(0,-1,q-q^{-1},-q^{-2},1,0,1,0\right), \label{eq:eigenv6}\\
v_3^{[2]} & = \left(-1,q-q^{-1},1-q^{-2},q+q^{-1}-q^{-3},-q,0,0,1\right),\label{eq:eigenv7}
\end{align}
and for $\lambda =4$
\begin{equation}\label{eq:eigenv8}
 v_1^{[4]} = q^3\left(q^3,q^2,q^1,1,1,q^{-1},q^{-2},q^{-3}\right).\qquad\qquad\qquad\qquad\quad\;
\end{equation}
%
We next study the action of $\cW_1$ on these 8 vectors (a direct calculation using~\eqref{q-O-XXZ-action-1}) and  check that  $\cW_1$ acts as a block tridiagonal matrix: the standard ordering of $\cW_0$-eigenspaces here is 
\begin{equation}\label{ex2-ord}
V_0= V_{[-2]} , \qquad V_1 =V_{[0]},\qquad V_2 =V_{[2]},\qquad V_3 =V_{[4]}\ ,
\end{equation}
i.e., the action is 
\begin{equation}\label{app:ex2-W1}
\cW_1 V_0 \subseteq V_0 \oplus V_1, \qquad \cW_1 V_1 \subseteq V_0 \oplus V_1 \oplus V_2,
\qquad \cW_1 V_2 \subseteq V_1 \oplus V_2 \oplus V_3,
\qquad \cW_1 V_3 \subseteq V_2 \oplus V_3,
\end{equation}
 and the coefficients in the action are certain rational functions in $q$ where the denominator is a power of $q$ or $(1+q^{2n})$, for $n\in\{1,2,3\}$, or $(1-q^2+q^4)$, or the product of them.
We have also a  decomposition onto $\cW_1$-eigenspaces as in~\eqref{ex2-V-decomp} (i.e., $\cW_1$ has the same spectrum),  and similarly we construct the $\cW_1$-eigenvectors as in~\eqref{eq:eigenv1}-\eqref{eq:eigenv8} by replacing $q\to q^{-1}$.  In this basis, the action of $\cW_0$ is block tridiagonal where the coefficients are again rational functions in $q$ with the denominators as described just above. 

We have computed  the transition matrix $S$ from the $\cW_0$-eigenbasis to the $\cW_1$-eigenbasis explicitly  (though we do not give it here, as its entries are rather complicated) and notice that all its entries are non-zero rational functions in $q$ with the common denominator 
\beq\label{eq:t}
t=q^5(1-q^2+q^4)(1+q^{4})(1+q^{2})^2\ .
\eeq 

 \vspace{1mm}
 
We prove next that this representation of $\qO$ is irreducible\footnote{We can not use the result~\cite[Theorem 1.17]{ITaug} on irreducibility of certain tensor-product representations  of $\qO$, as we consider different embedding of $\qO$ into the quantum affine algebra $U_q(\widehat{sl}_2)$ and thus we deal with different  tensor-product  representations of $\qO$.}.

\begin{prop}\label{prop:V-irrep}
The representation of $\qO$ on $V$ given by~\eqref{q-O-XXZ-action-1} is irreducible.
\end{prop}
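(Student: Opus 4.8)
The plan is to verify condition (iv) of Definition~\ref{deficitri} directly: I will show that any subspace $W\subseteq V$ with $\cW_0 W\subseteq W$, $\cW_1 W\subseteq W$ and $W\neq 0$ must equal $V$, which is exactly the assertion that the $\qO$-module $V$ is irreducible.

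First I would use that $\cW_0$ is diagonalizable with four \emph{distinct} eigenvalues $[\lambda]_q$, $\lambda\in\{-2,0,2,4\}$; its primitive idempotents $E_p$ are therefore polynomials in $\cW_0$, so every $\cW_0$-invariant $W$ is graded, $W=\bigoplus_{p=0}^{3}(W\cap V_p)$, in the standard ordering~\eqref{ex2-ord}. In particular $W$ contains a nonzero vector lying in a single eigenspace, and for the one-dimensional spaces $V_0=V_{[-2]}$ and $V_3=V_{[4]}$ we have the dichotomy $V_p\subseteq W$ or $W\cap V_p=0$. Symmetrically, $\cW_1$-invariance makes $W$ graded for the $\cW_1$-eigenspaces.

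The core step is to propagate a nonzero vector across all eigenspaces. I would write $\cW_1=L+B+R$ for its lowering, block-diagonal and raising parts relative to~\eqref{app:ex2-W1}, so that $R\colon V_p\to V_{p+1}$ and $L\colon V_p\to V_{p-1}$. Starting from a nonzero $w\in W\cap V_p$, repeatedly applying $\cW_1$ and projecting onto $\cW_0$-eigenspaces (which preserves $W$, the $E_p$ being polynomials in $\cW_0$) should drive $w$ into the extremal one-dimensional space $V_{[-2]}$ by iterating $L$, or into $V_{[4]}$ by iterating $R$, with nonzero image; hence $W$ contains one of these. Once $V_{[-2]}=V_0\subseteq W$ one gets $R^{\,p}V_0\subseteq W\cap V_p$ for all $p$, and together with the analogous chain $L^{\,p}V_3\subseteq W\cap V_{3-p}$ from the top and the block-diagonal action $B$, the remaining task is to show these images span each $V_p$. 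This is precisely a rank computation on the explicit off-diagonal blocks assembled from the eigenvectors~\eqref{eq:eigenv1}--\eqref{eq:eigenv8} and the matrix~\eqref{q-O-XXZ-action-1}.

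The main obstacle is the two three-dimensional eigenspaces $V_{[0]}$ and $V_{[2]}$: unlike the Leonard (one-dimensional) case, $W$ could \emph{a priori} meet them in a proper nonzero subspace, and the symmetry and unimodality shortcuts available for Leonard pairs do not apply, so the raising and lowering maps restricted to these blocks have nontrivial kernels that must be controlled. I expect to close this either by the explicit verification that the iterated maps $R^{\,p}$, $L^{\,p}$ and $B$ fill $V_{[0]}$ and $V_{[2]}$, or, more conceptually, by invoking that the transition matrix $S$ between the $\cW_0$- and $\cW_1$-eigenbases has \emph{all} entries nonzero with common denominator~\eqref{eq:t}: a common invariant subspace is simultaneously graded for both operators, and the total non-degeneracy of $S$ is the key input obstructing a nonzero proper doubly graded subspace. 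Either way the statement reduces to a finite, explicit linear-algebra check in the eight-dimensional space, which I anticipate will be the technically heaviest but conceptually routine part of the argument.
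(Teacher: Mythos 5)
Your skeleton coincides with the paper's proof: gradedness of any invariant subspace $W$ with respect to the $\cW_0$-eigenspace decomposition via idempotents (Lemma~\ref{lem:inv-subspace-1}), the fact that either extremal eigenvector $v_1^{[-2]}$ or $v_1^{[4]}$ generates all of $V$ (Lemma~\ref{lem:inv-subspace-2}, proved by checking that the eight vectors obtained by applying the words~\eqref{eq:words-basis} are linearly independent), and finally an elimination inside the three-dimensional blocks $V_{[0]}$ and $V_{[2]}$. However, the two places where you defer the work are exactly where the content of the proof lies, and one of your two proposed ways of closing the main gap would fail. The propagation step --- that iterating $\cW_1$ on a nonzero $w\in W\cap V_{[0]}$ (or $W\cap V_{[2]}$) produces a nonzero component in $V_{[-2]}$ or $V_{[4]}$ --- is not automatic: the lowering map $V_{[0]}\to V_{[-2]}$ and the raising map $V_{[2]}\to V_{[4]}$ go from three-dimensional to one-dimensional spaces and have two-dimensional kernels, so a priori $w$ could be chosen with all extremal components of $\cW_1 w$, $\cW_1^2 w, \ldots$ vanishing. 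The paper's proof consists precisely of excluding this: writing $w_1=\sum_{k=1}^{3}c_k v_k^{[0]}$, applying $\cW_1$ and $\cW_1^2$, and showing that the vanishing of the resulting $v_1^{[-2]}$- and $v_1^{[4]}$-coefficients forces $c_1=c_2=c_3=0$ (and similarly in $V_{[2]}$). Your first option, the explicit rank computation, is exactly this verification; until it is carried out your argument is a plan rather than a proof.

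Your second, ``more conceptual'' option is a genuine error: total non-degeneracy of the transition matrix $S$ does not obstruct a proper doubly graded subspace once eigenspaces have dimension greater than one. The implication ``$W$ contains an eigenvector whose expansion in the other eigenbasis has all coefficients nonzero, hence $W=V$'' is valid only when $W$ is spanned by subsets of the two eigenbases, i.e., when both spectra are simple (the Leonard-pair situation); here $W\cap V_{[0]}$ and $W\cap V_{[2]}$ can be lines in no special position relative to the chosen bases, and the non-vanishing of the entries of $S$ --- a basis-dependent statement --- says nothing about such lines. This is precisely the obstacle you yourself identify in your last paragraph, so that route cannot close it; only the explicit computation can, which is why the paper performs it. A smaller unverified claim of the same kind: once $V_{[-2]}\subseteq W$, the chains $R^{\,p}V_0$, $L^{\,p}V_3$ together with the compressions $E_p\cW_1 E_p$ do lie in $W$ (the $E_p$ being polynomials in $\cW_0$), but that they span each three-dimensional block is again a rank assertion; the paper instead verifies directly that the mixed words~\eqref{eq:words-basis}, which interleave $\cW_0$ (acting by distinct scalars on the blocks) with powers of $\cW_1$, produce a full basis of $V$ from the extremal vector.
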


We begin our proof with the very simple lemma.
\begin{sublem}\label{lem:inv-subspace-1}
 Any  subspace $W\subset V$ invariant under the action of $\qO$ is a direct sum of $\cW_0$-eigenspaces.
 \end{sublem}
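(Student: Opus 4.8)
The plan is to deduce the statement from a single fact: $W$ is invariant under all of $\qO$, hence in particular under the generator $\cW_0$, and $\cW_0$ is diagonalizable with pairwise distinct eigenvalues. Recall from~\eqref{ex2-V-decomp} that $V=V_{[-2]}\oplus V_{[0]}\oplus V_{[2]}\oplus V_{[4]}$, where $V_{[\lambda]}$ is the $\cW_0$-eigenspace for the eigenvalue $[\lambda]_q$, and that the four eigenvalues $[-2]_q,[0]_q,[2]_q,[4]_q$ are distinct as rational functions of the indeterminate $q$, i.e.\ distinct elements of $\mathbb{C}(q)$. This distinctness is the only input that needs to be checked, and it is immediate.

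First I would introduce the primitive idempotents $E_p$ of $\cW_0$, that is, the projectors onto the four eigenspaces $V_{[\lambda]}$ along the other three. Because the eigenvalues are pairwise distinct, each $E_p$ is a polynomial in $\cW_0$, given by the Lagrange interpolation formula $E_p=\prod_{r\neq p}(\cW_0-\theta_r)(\theta_p-\theta_r)^{-1}$, where $\theta_0,\dots,\theta_3$ denote the four eigenvalues $[-2]_q,[0]_q,[2]_q,[4]_q$. These satisfy $\one=\sum_p E_p$, $E_pE_{p'}=\delta_{p,p'}E_p$, and $E_pV=V_{[\lambda_p]}$.

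Next, since each $E_p$ is a polynomial in $\cW_0$ and $\cW_0 W\subseteq W$, I obtain $E_pW\subseteq W$ for every $p$. Applying $\one=\sum_p E_p$ to an arbitrary $w\in W$ then gives $W=\bigoplus_p E_pW$ with $E_pW=W\cap V_{[\lambda_p]}$, the sum being direct by orthogonality of the $E_p$. Thus $W$ splits as a direct sum of subspaces, each lying inside a single $\cW_0$-eigenspace and spanned by $\cW_0$-eigenvectors, which is the assertion of the lemma. For the two one-dimensional eigenspaces $V_{[-2]}$ and $V_{[4]}$ the summand $W\cap V_{[\lambda]}$ is automatically either $0$ or the full eigenspace.

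I expect no genuine obstacle in the lemma itself: it is exactly the standard spectral-projection argument, and the only verification---distinctness of the four eigenvalues---is trivial over $\mathbb{C}(q)$. The real difficulty lies downstream, in the proof of Proposition~\ref{prop:V-irrep}: to promote each three-dimensional summand $W\cap V_{[0]}$, $W\cap V_{[2]}$ to the full eigenspace, and ultimately to force $W=0$ or $W=V$, one must exploit the block-tridiagonal action of $\cW_1$ recorded in~\eqref{app:ex2-W1} together with the crucial fact that the transition matrix $S$ between the $\cW_0$- and $\cW_1$-eigenbases has all entries nonzero (common denominator~\eqref{eq:t}). That, rather than the present spectral decomposition, is where I would expect the bulk of the technical effort to go.
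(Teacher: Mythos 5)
Your proof is correct and follows essentially the same route as the paper: both arguments use only that $W$ is invariant under $\cW_0$ and that $\cW_0$ is diagonalizable with four distinct eigenvalues, concluding $W=\bigoplus_{\lambda}\bigl(W\cap V_{[\lambda]}\bigr)$. The only difference is cosmetic --- the paper invokes complete reducibility of $V$ as a module over the commutative algebra $\oC\cW_0$, while you make the same step explicit by realizing the spectral projectors as Lagrange-interpolation polynomials in $\cW_0$, which if anything renders the paper's terse ``therefore'' more self-contained.
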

\begin{proof}
Assume that $W\subset V$ is invariant under the action of $\qO$  and therefore it is invariant under the action of  the commutative algebra $\oC\cW_0$. We have shown that $\cW_0$ is diagonalisable and thus $V$ is a direct sum of $1$-dimensional irreducible representations of $\oC\cW_0$ (we have $4$ isomorphism classes of them).  Therefore, an invariant subspace $W\subset V$ is a direct sum of the irreducible $\oC\cW_0$ representations and this is the statement of the lemma.
\end{proof}

We then study the action of $\cW_1$ on the  $\cW_0$-eigenvectors of the highest- and lowest-weights $[4]_q$ and $[-2]_q$   correspondingly.
\begin{sublem}\label{lem:inv-subspace-2}
Assume that an invariant subspace $W\subseteq V$ contains one of the $\cW_0$-eigenvectors $v_1^{[-2]}$ or $v_1^{[4]}$ of the eigenvalues  $[-2]_q$ and $[4]_q$, correspondingly,  then $W=V$.
\end{sublem}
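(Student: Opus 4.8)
The plan is to combine Lemma~\ref{lem:inv-subspace-1} with a ladder argument driven by the block tridiagonal action of $\cW_1$ displayed in~\eqref{app:ex2-W1}. By Lemma~\ref{lem:inv-subspace-1}, the invariant subspace $W$ is a direct sum of $\cW_0$-eigenspaces, so $W=\bigoplus_{p\in S}V_p$ for some subset $S\subseteq\{0,1,2,3\}$, where $\{V_p\}_{p=0}^3$ is the standard ordering~\eqref{ex2-ord}. Since $v_1^{[-2]}$ spans the one-dimensional eigenspace $V_0=V_{[-2]}$ and $v_1^{[4]}$ spans $V_3=V_{[4]}$, the hypothesis forces $0\in S$ in the first case and $3\in S$ in the second.

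I would treat first the case $v_1^{[-2]}\in W$, hence $V_0\subseteq W$, and show inductively that $\{0,1,2,3\}\subseteq S$. Let $E_p$ denote the projector onto $V_p$ with kernel the sum of the remaining eigenspaces. The key step is the implication: if $V_0\oplus\cdots\oplus V_p\subseteq W$ and there is a vector $v\in V_p$ with $E_{p+1}\cW_1 v\neq 0$, then $V_{p+1}\subseteq W$. Indeed, $\cW_1 v\in W$ by invariance, while~\eqref{app:ex2-W1} shows that its only component lying outside $V_0\oplus\cdots\oplus V_p$ is its $V_{p+1}$-component; subtracting the lower components, which already belong to $W$, gives $E_{p+1}\cW_1 v\in W\cap V_{p+1}$. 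As $W$ is a sum of eigenspaces, $W\cap V_{p+1}$ is nonzero only when $p+1\in S$, whence $V_{p+1}\subseteq W$. Applying this for $p=0$ with $v=v_1^{[-2]}$, then for $p=1$ and $p=2$, climbs the chain $V_0\subseteq V_0\oplus V_1\subseteq\cdots\subseteq V$, so $W=V$. The case $v_1^{[4]}\in W$ is treated in the mirror-image way, starting from $V_3$ and descending by means of the lower off-diagonal blocks $E_{p-1}\cW_1 E_p$.

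The substance of the proof, and the step I expect to be the main obstacle, is to verify the non-vanishing statements that keep the ladder from stalling: for the upward chain one needs $E_1\cW_1 v_1^{[-2]}\neq 0$ together with the non-vanishing of the blocks $E_2\cW_1 E_1$ and $E_3\cW_1 E_2$, and symmetrically $E_2\cW_1 v_1^{[4]}\neq 0$ together with $E_1\cW_1 E_2\neq 0$ and $E_0\cW_1 E_1\neq 0$ for the downward chain. Each of these is a finite, explicit computation: one feeds the matrix $\cW_1=A_-$ from~\eqref{q-O-XXZ-action-1} and the eigenvectors~\eqref{eq:eigenv1}--\eqref{eq:eigenv8} into the relevant projection and checks that the resulting expression, a rational function of $q$, does not vanish identically, i.e. is a nonzero element of the ground field $\mathbb{C}(q)$. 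Since all the required off-diagonal blocks of $\cW_1$ in the $\cW_0$-eigenbasis are nonzero over $\mathbb{C}(q)$, the induction propagates through all four eigenspaces and yields $W=V$ in both cases.
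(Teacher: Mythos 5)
There is a genuine gap, and it sits in your very first step. Lemma~\ref{lem:inv-subspace-1} does \emph{not} say that an invariant subspace is a direct sum of \emph{full} $\cW_0$-eigenspaces; its proof decomposes $V$ into one-dimensional irreducible $\oC\cW_0$-representations, so the correct conclusion is only that $W$ is spanned by $\cW_0$-eigenvectors, i.e.\ $W=\bigoplus_{p=0}^{3}\bigl(W\cap V_p\bigr)$. Since the middle eigenspaces $V_1=V_{[0]}$ and $V_2=V_{[2]}$ in the ordering~\eqref{ex2-ord} are three-dimensional, $W\cap V_p$ may be a proper nonzero subspace of $V_p$, and your decomposition $W=\bigoplus_{p\in S}V_p$ is a false strengthening. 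Consequently the pivotal inference of your ladder, ``$W\cap V_{p+1}\neq 0$, whence $V_{p+1}\subseteq W$,'' fails: producing the single nonzero vector $E_1\cW_1 v_1^{[-2]}$ only places one \emph{line} of the three-dimensional space $V_1$ inside $W$, not all of $V_1$. Tracking lines, your chain yields at most $V_0$, a line in $V_1$, a line in $V_2$, and $V_3$ --- at most four of the eight dimensions --- so the argument stalls exactly where the eigenvalue multiplicities appear. Nor does the non-vanishing of the blocks $E_{p+1}\cW_1 E_p$ as maps rescue this: you would need non-vanishing on the particular lines generated, and even then you would have to show that the accumulated images span each three-dimensional eigenspace, which your proposal never addresses. (That the paper's own Lemma must be read this way is confirmed by the sequel: the proof of Proposition~\ref{prop:V-irrep} explicitly considers a generating vector $w_1=\sum_k c_k v_k^{[0]}$ with \emph{unknown} coefficients inside the three-dimensional $V_1$, which would be vacuous if invariant subspaces were sums of full eigenspaces.)

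This is precisely why the paper's proof takes a different, more computational route: it applies the seven explicit words~\eqref{eq:words-basis} in $\cW_0$ and $\cW_1$ to $v_1^{[-2]}$ (and the same words to $v_1^{[4]}$) and verifies by direct calculation that the resulting eight vectors are linearly independent, hence span $V$. A repaired version of your approach would have to do something equivalent: after obtaining $E_1\cW_1 v_1^{[-2]}\in W$, keep applying $\cW_1$ and use $\cW_0$ (distinct eigenvalues $\theta_p$) to separate components, and then check linear independence of the specific vectors so produced --- a finite computation, but one your proposal replaces by the insufficient block-level non-vanishing claims. As written, the proof does not establish the lemma.
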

\begin{proof}
We begin with $v_1^{[-2]}$ and prove the statement by a direct and recursive calculation of the action of $\cW_1$ and $\cW_0$. We construct explicitly a basis in $V$ by applying the following words in the generators
\begin{equation}\label{eq:words-basis}
\cW_1, \quad \cW^2_1, \quad \cW^3_1, \quad \cW_0 \cW^2_1, \quad \cW_1 \cW_0 \cW^2_1, \quad
\cW_0 \cW_1 \cW_0 \cW^2_1, \quad \cW_1 \cW_0 \cW_1 \cW_0 \cW^2_1
\end{equation}
on $v_1^{[-2]}$ and it is a straightforward check that the eight vectors (together with $v_1^{[-2]}$) are indeed  linearly independent. Therefore, if $v_1^{[-2]}$ is in a submodule it generates the whole module $V$. We do the similar calculation for $v_1^{[4]}$, using the same words in the $q-$Onsager algebra generators.
\end{proof}

\begin{proof}[Proof of Prop.~\ref{prop:V-irrep}]
By  Lemma~\ref{lem:inv-subspace-2} we thus have that a proper invariant subspace $W$ (if it exists) is a subspace in $V_1\oplus V_2$. 
%
We then analyze the action of $\cW_1$ and $\cW_0$ on the direct sum $V_1\oplus V_2$. Assume that there exists a proper submodule $W$ in $V_1\oplus V_2$. A vector generating this submodule is in $V_1$ or $V_2$, due to Lem.~\ref{lem:inv-subspace-1}. Assume it is in $V_1$ and denote it by $w_1$.  It can be written as a linear combination of the basis vectors in  $V_1=V_{[0]}$, see~\eqref{eq:eigenv2}-\eqref{eq:eigenv4},
\begin{equation}
w_1 = \sum_{k=1}^3c_k v_{k}^{[0]}
\end{equation}
where $c_k\in\oC(q)$ are subject to certain conditions. The idea is that acting by $\cW_1$ on $w_1$ we produce   $v_{1}^{[-2]}$ or $v_{1}^{[4]}$ and the corresponding coefficients, as linear functions of $c_k$, have to be  $0$, otherwise $W=V$ due to Lem.~\ref{lem:inv-subspace-2}. It turns out  that it is enough to  apply $\cW_1$ and $\cW_1^2$ and solve the corresponding linear equations on the coefficients in front of $v_{1}^{[-2]}$ and $v_{1}^{[4]}$.  This way we recursively find that all $c_1=c_2=c_3=0$ 
and thus a contradiction to the assumption on $W$. 
We do the similar analysis for $V_2=V_{[2]}$ and arrive at the same conclusion that there exists no vector generating a proper submodule in $V_1\oplus V_2$, and therefore  there are no proper invariant subspaces for the action of $\cW_0$ and $\cW_1$ in $V$, or
 the module $V$ is irreducible.
We have thus shown explicitly  that $\cW_0$ and $\cW_1$ form a TD pair of the  $q-$Racah type.
\end{proof}

\subsubsection{Specialization to $q=e^{\rmi\pi/3}$}
 We specialize now $q$ to the root of unity $e^{\rmi\pi/3}$ and consider the representation of $\cW_i=\cW_i^{(3)}$ defined by~\eqref{q-O-XXZ-action-1} at this value of~$q$.  We notice that the points (i)-(iii) of Assumption~\ref{ass} hold: for the point (i), the common denominator of the diagonal and block tridiagonal matrices for $\cW_0$ and $\cW_1$ is some power of $q$ multiplied by $(1-q^2+q^4)\prod_{n=1}^3(1+q^{2n})$ that has a finite non-zero value at  $q=e^{\rmi\pi/3}$; for the point (ii),  the entries of the transition matrix $S$ from  the $\cW_0$-eigenbasis to the $\cW_1$-eigenbasis  have  also denominators with finite non-zero values at the specialization, recall the common denominator $t$ of the entries of $S$ in~\eqref{eq:t}; and  for the point (iii) the ratio $c/b=q$ (recall that $q^{\pm3}=-1$).
The vector space $V=\bigl(\mathbb{C}^2\bigr)^{\otimes 3}$ is then decomposed onto a direct sum of three $\cW_0$-eigenspaces  (in agreement with Prop.~\ref{lem1})
$$
V=  V_{[-1]}\oplus V_{[0]} \oplus V_{[+1]} \ ,
$$
where $3$-dimensional eigenspaces $V_{[1]}$ and $V_{[0]}$ correspond to the eigenvalues $[1]_q=1$ and $0$, respectively, and $V_{[-1]}$ is to the  $(-1)$-eigenvalue, it is two-dimensional.
Note that the two 1-dimensional eigenspaces $V_0(q)\equiv V_0$ and $V_3(q)\equiv V_3$ from~\eqref{ex2-ord} at generic $q$ have a well defined  specialization at $q= e^{\rmi\pi/3}$ (in the basis~\eqref{eq:eigenv1} and~\eqref{eq:eigenv8} stated above) and contribute to the same $(-1)$-eigenvalue eigenspace  $V_{[-1]}$, and $v_1^{[-2]}$ and $v_1^{[4]}$ form an eigenbasis for $V_{[-1]}$. We will denote these basis elements as $v_1^{[-1]}$ and $v_2^{[-1]}$, correspondingly.
The  $\cW_0$-eigenvectors in $V_{[0]}$ and $V_{[1]}$ are the specialization (the limit $q\to e^{\rmi\pi/3}$) of the vectors in~\eqref{eq:eigenv2}-\eqref{eq:eigenv4} and~\eqref{eq:eigenv5}-\eqref{eq:eigenv7}, correspondingly, and they form a basis in $V_{[0]}$ and $V_{[1]}$.
It is then straightforward to check that the action of $\cW_1$ (under the specialization of~\eqref{q-O-XXZ-action-1}) in this basis is cyclic:
\begin{equation}\label{eq:W1-act-cyclic-ex2}
\begin{split}
\cW_1\, V_{[\pm1]} &\subset V_{[-1]} \oplus V_{[0]} \oplus V_{[+1]}\ ,\\ 
\cW_1\, V_{[0]} &\subset V_{[-1]} \oplus V_{[0]} \oplus V_{[+1]}\ ,
\end{split}
\end{equation}
with non-trivial contribution of each of the three direct summands in the action. 
The eigenvectors for $\cW_1$ are constructed by setting $q$ to $e^{-\rmi\pi/3}$ in~\eqref{eq:eigenv1}-\eqref{eq:eigenv8}.  The action of $\cW_0$ in this basis is then also  cyclic and tridiagonal as in~\eqref{eq:W1-act-cyclic-ex2}.
\vspace{1mm}

We now prove 
that  $V$ does not have proper  subspaces invariant under the action of $\cW_0$ and $\cW_1$. The idea of the proof is similar to the case of TD pairs above -- the only difference is that we do not have one-dimensional eigenspaces and our arguments are thus slightly modified. In particular, Lemma~\ref{lem:inv-subspace-2} is modified as follows.
\begin{sublem}\label{lem:inv-subspaceR-2}
Assume that an invariant subspace $W\subseteq V$ contains a non-zero $\cW_0$-eigenvector  of the eigenvalue  $-1$  then $W=V$.
\end{sublem}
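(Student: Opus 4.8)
The plan is to mirror the proof of \lemref{lem:inv-subspace-2}, but first to reduce the two-dimensional eigenspace $V_{[-1]}$ to one of its distinguished lines. Write a nonzero $\cW_0$-eigenvector of eigenvalue $-1$ as $w=\lambda\, v_1^{[-1]}+\mu\, v_2^{[-1]}$ with $(\lambda,\mu)\neq(0,0)$, where $v_1^{[-1]}$ and $v_2^{[-1]}$ are the specializations of $v_1^{[-2]}$ and $v_1^{[4]}$ from \eqref{eq:eigenv1} and \eqref{eq:eigenv8}. Since $\cW_0$ has the three distinct eigenvalues $-1,0,1$ at $q=e^{\rmi\pi/3}$, its spectral projector onto $V_{[-1]}$ is $\Pi_{[-1]}=\tfrac12\cW_0(\cW_0-\one)$, a polynomial in $\cW_0$; hence $Y:=\Pi_{[-1]}\cW_1\Pi_{[-1]}$ lies in the algebra generated by $\cW_0,\cW_1$ and maps $W\cap V_{[-1]}$ into itself.

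The key point I would establish is that $Y$ acts on $V_{[-1]}$ as a \emph{diagonal} operator in the basis $\{v_1^{[-1]},v_2^{[-1]}\}$ with two \emph{distinct} eigenvalues. At generic $q$ the block-tridiagonal action of $\cW_1$ in the $\cW_0$-eigenbasis with the ordering \eqref{ex2-ord} does not connect $V_0$ to $V_3$ (they sit at distance $3$), so $\cW_1 v_1^{[-2]}\in V_0\oplus V_1$ and $\cW_1 v_1^{[4]}\in V_2\oplus V_3$; projecting with $\Pi_{[-1]}$ after specialization therefore gives $Y v_1^{[-1]}=y_1 v_1^{[-1]}$ and $Y v_2^{[-1]}=y_2 v_2^{[-1]}$, where $y_1,y_2$ are the $V_0$- and $V_3$-self-coefficients of $\cW_1$ evaluated at $q=e^{\rmi\pi/3}$. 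A direct computation from \eqref{q-O-XXZ-action-1} shows $y_1\neq y_2$ at this value of $q$.

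With this in hand I would finish by a short case analysis. If $\lambda\mu\neq0$, then $w$ and $Yw=\lambda y_1 v_1^{[-1]}+\mu y_2 v_2^{[-1]}$ are linearly independent because the associated determinant equals $\lambda\mu(y_2-y_1)\neq0$, so $V_{[-1]}=\mathbb{C}w+\mathbb{C}Yw\subseteq W$; in particular $v_1^{[-1]}\in W$. If instead $\lambda=0$ or $\mu=0$, then $w$ is proportional to $v_2^{[-1]}$ or to $v_1^{[-1]}$. Thus in every case $W$ contains $v_1^{[-1]}$ or $v_2^{[-1]}$, and I conclude exactly as in \lemref{lem:inv-subspace-2}: applying the words \eqref{eq:words-basis} (now with $\cW_0,\cW_1$ specialized at $q=e^{\rmi\pi/3}$) to that vector produces eight linearly independent vectors, hence $W=V$. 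The main obstacle is the two explicit root-of-unity checks — the distinctness $y_1\neq y_2$ and the non-vanishing at $q=e^{\rmi\pi/3}$ of the $8\times8$ determinant witnessing linear independence of the words — since at a root of unity one must rule out accidental cancellations; the relevant denominators $1-q^2+q^4$ and $1+q^{2n}$ are nonzero there, which is what makes the specialization safe.
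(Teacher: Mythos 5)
Your reduction step is correct and, in substance, identical to the paper's: the paper applies $\cW_1$ to $v=v_1^{[-1]}+c\,v_2^{[-1]}$ and finds $\cW_1 v=-3v_1^{[-1]}-4v+w$ with $w\in V_{[0]}\oplus V_{[1]}$, which is exactly your assertion that $Y=\Pi_{[-1]}\cW_1\Pi_{[-1]}$ is diagonal on $V_{[-1]}$ in the basis $\{v_1^{[-1]},v_2^{[-1]}\}$ with distinct eigenvalues (here $y_1=-7$, $y_2=-4$). Your structural explanation of the diagonality — at generic $q$ the blocks $V_0$ and $V_3$ of \eqref{ex2-ord} sit at distance $3$ in the block-tridiagonal action \eqref{app:ex2-W1}, so no cross terms can survive the (finite, by Assumption~\ref{ass}) specialization — is a nice repackaging of the paper's direct computation, and invoking \lemref{lem:inv-subspace-1} together with the projector $\Pi_{[-1]}=\tfrac12\cW_0(\cW_0-\one)$ to keep $Yw$ inside $W$ is legitimate.

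The gap is in your concluding step. You claim that applying the words \eqref{eq:words-basis} to $v_1^{[-1]}$ or $v_2^{[-1]}$ at $q=e^{\rmi\pi/3}$ yields eight linearly independent vectors, with only denominator nonvanishing to verify. This fails, and not for denominator reasons: at $q=e^{\rmi\pi/3}$ the operator $\cW_1$ is diagonalizable with the three eigenvalues $-1,0,1$, hence satisfies the minimal polynomial relation $\cW_1^3=\cW_1$ — this is precisely the polynomial relation \eqref{relPt-W} at $N=3$. The third word $\cW_1^3$ in \eqref{eq:words-basis} therefore reproduces $\cW_1 v$ identically, so your eight vectors obey a forced linear relation and the $8\times 8$ determinant you propose to check vanishes identically; the ``accidental cancellation'' you worry about is built into the specialization itself, not located in the poles $1-q^2+q^4$ or $1+q^{2n}$. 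This is exactly why the paper's proof of \lemref{lem:inv-subspaceR-2} replaces the generic-$q$ list by a different one (it flags ``note a difference from~\eqref{eq:words-basis}''), namely $\cW_1,\ \cW_1^2,\ \cW_0\cW_1^2,\ \cW_1\cW_0\cW_1^2,\ \cW_1^2\cW_0\cW_1^2,\ \cW_0\cW_1^2\cW_0\cW_1^2,\ \cW_1\cW_0\cW_1^2\cW_0\cW_1^2$, in which no generator occurs to a power exceeding $2$. With that corrected list (and the independence recheck done at the root of unity) your argument goes through; as written, the final step is broken.
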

\begin{proof}
Recall that $V_{[-1]}$ is of dimension two and we have the basis elements  $v_1^{[-1]}$ and $v_2^{[-1]}$ in $V_{[-1]}$ given by the specialization of~\eqref{eq:eigenv1} and~\eqref{eq:eigenv8}, correspondingly.
We first assume that $v_1^{[-1]}\in W$ and prove that then $W=V$ by a direct calculation of the action of $\cW_1$ and $\cW_0$. We construct explicitly a basis in $V$ by applying the following words in the generators (note a difference from~\eqref{eq:words-basis})
$$
\cW_1, \quad \cW^2_1, \quad \cW_0 \cW^2_1, \quad \cW_1 \cW_0 \cW^2_1, \quad
 \cW_1^2 \cW_0 \cW^2_1, \quad \cW_0 \cW_1^2 \cW_0 \cW^2_1, \quad \cW_1 \cW_0 \cW_1^2 \cW_0 \cW^2_1
$$ 
on $v_1^{[-1]}$ and it is a straightforward check that the eight vectors (together with $v_1^{[-1]}$) are indeed  linearly independent. Therefore, if $v_1^{[-1]}$ is in a submodule it generates the whole module $V$. We do the similar calculation for $v_2^{[-1]}$, using the same words in  $\cW_1$ and $\cW_0$, and check that $v_2^{[-1]}$ generates $V$. Therefore, if there exists a $\cW_0$-eigenvector (of eigenvalue $-1$) generating a proper invariant subspace it can be chosen as the linear combination $v = v_1^{[-1]} + c v_2^{[-1]}$, for $c\in\oC^{\times}$. Applying then $\cW_1$ on this combination we get  
$\cW_1 v = -3v_1^{[-1]} - 4v + w$,
where $w\in V_{[0]}\oplus V_{[1]}$. Therefore for any $c\in\oC^{\times}$, the action generates $v_1^{[-1]}$ and thus the whole module $V$, due to the previous steps.
\end{proof}

By this lemma we thus have that a proper invariant subspace $W$ (if it exists) is a subspace in $V_{[0]}\oplus V_{[1]}$. The rest of the proof literally repeats the proof of Proposition~\ref{prop:V-irrep} given after the proof of Lemma~\ref{lem:inv-subspace-2}.
We have thus demonstrated that   all the conditions in Definition~\ref{deficitri} hold  for $N=3$, and the pair of operators given by the matrices in~\eqref{q-O-XXZ-action-1} at $q=e^{\rmi\pi/3}$  indeed forms a cyclic TD pair.

\subsection{Divided polynomials}\label{app:ex-2-div}
 In this section, we compute the divided polynomials~\eqref{WNnew} associated with (\ref{q-O-XXZ-action}) 
at $N=3$ or $q=e^{\rmi\pi/3}$. To this end, one needs first to compute powers $\cW_i^m$, $i=0,1$ for $m=2,3$. Define  
$w=k_+ \sigma_+ + k_-\sigma_-$. 
From (\ref{WNnew}) with (\ref{polymin}) and (\ref{valabc}), by straightforward calculation one finds:  
\begin{eqnarray*}
\mathcal{W}_0^{\footnotesize[3]}&=&  w\otimes w \otimes w + \frac{k_+k_-}{[2]_q}\left(w \otimes \mathbb{I} \otimes  \mathbb{I} + w\otimes  q^{2\sigma_z} \otimes \mathbb{I} - \mathbb{I}\otimes w \otimes \mathbb{I} \right)
\\&+&\epsilon_+ \left( w \otimes w \otimes q^{\sigma_z}  + w\otimes q^{2\sigma_z} \otimes w - \mathbb{I}\otimes w \otimes w \right)\\
 &+&\frac{\epsilon_+^2}{[2]_q}  \left( w \otimes q^{2\sigma_z}   \otimes q^{2\sigma_z}  - \mathbb{I} \otimes w \otimes q^{2\sigma_z}  + \mathbb{I}\otimes \mathbb{I} \otimes w \right),
\end{eqnarray*}
and similarly for $\cW_1^{\footnotesize[3]}$, replacing $\epsilon_+\rightarrow \epsilon_-$ and $q\rightarrow q^{-1}$ in the above expression. 
Note that the action of the divided polynomials on $V_p$, $p=0,1,2,3$ is given by Lemma \ref{lem2} and Proposition \ref{actWN} for $N=3$.\vspace{1mm}

 To conclude this section, let us point out that for the two operators defined in (\ref{q-O-XXZ-action}) and generic values of $q$,  their  spectra take the form  (\ref{specgen-I}). For the first operator, in terms of the parameters $\epsilon_+,k_\pm$ the identification is:
\beqa
&& a=0,\quad b= \frac{1}{2}\left(\epsilon_+-\sqrt{\epsilon_+^2+\frac{4k_+k_-}{(q-q^{-1})^2}}\right)q^{-3}\ ,\quad 
c=\frac{1}{2}\left(\epsilon_++\sqrt{\epsilon_+^2+\frac{4k_+k_-}{(q-q^{-1})^2}}\right)q^{3}.
\eeqa
Setting $\epsilon_\pm=-(q-q^{-1})$ and $k_{\pm}$ such that $\sqrt{k_+ k_-}=q-q^{-1}$, one observes that the coefficients  $a$, $b$, and $c$ (together with  $a^*$, $b^*$, and $c^*$) satisfy the conditions~\eqref{eq:conditions-bc}, with $\beta=-2$.  Therefore, the four operators $\W_i$ and $\cW_i^{\footnotesize{[3]}}$, for $i=0,1$, satisfy the relations in Theorem~\ref{prop:mixed-rel-k-zero} and Theorem~\ref{propNOns}.

\bigskip

\end{appendix}

\end{document}